\def\ps@pprintTitle{%
 \let\@oddhead\@empty
 \let\@evenhead\@empty
 \def\@oddfoot{\centerline{\thepage}}%
 \let\@evenfoot\@oddfoot}
\newtheorem{theorem}{Theorem}
\newtheorem{lemma}{Lemma}
\newtheorem{corollary}{Corollary}
\newtheorem{pro}{Proposition}
\theoremstyle{definition}
\newtheorem{definition}{Definition}
\newtheorem{remark}{Remark}
\newcommand{\bi}{\begin{itemize}}
\newcommand{\ei}{\end{itemize}}
\newcommand{\ben}{\begin{enumerate}}
\newcommand{\een}{\end{enumerate}}
\newcommand{\be}{\begin{equation}}
\newcommand{\ee}{\end{equation}}
\newcommand{\ba}{\begin{aligned}}
\newcommand{\ea}{\end{aligned}}
\newcommand{\bea}{\begin{eqnarray}}
\newcommand{\eea}{\end{eqnarray}}
\newcommand{\bc}{\begin{center}}
\newcommand{\ec}{\end{center}}
\newcommand{\ie}{{\it i.e.\ }}
\newcommand{\mbf}[1]{{\bm #1}}           
\newcommand{\sfrac}[2]{\mbox{\small $\frac{#1}{#2}$}}
\newcommand{\half}{\sfrac{1}{2}}
\newcommand{\rhalf}{\frac{1}{2}}
\newcommand{\bigO}{{\mathcal O}}
\newcommand{\RR}{\mathbb{R}}
\newcommand{\bsigma}{\bm{\sigma}}
\newcommand\dt{\Delta t}
\newcommand\dx{\Delta x}
\newcommand\bnu{{\boldsymbol{\nu}}}
\newcommand\bxi{{\boldsymbol{\xi}}}
\newcommand\bx{\mathbf{x}}
\newcommand\by{\mathbf{y}}
\newcommand\bz{\mathbf{z}}
\newcommand{\bsigmahat}{\hat{\bm{\sigma}}(\bxi)}
\newcommand{\bfhat}{\hat{\bm{f}}(\bxi)}
\newcommand{\Vhat}{\hat{V}(\bxi)}
\newcommand\bxy{\bx-\by}
\newcommand\lbxy{|\bxy|}
\newcommand\xdy{\bx\cdot\by}
\newcommand\dk{\frac{\partial G(\bx-\by,t-\tau)}{\partial \bnu(\by)}}
\newcommand\snd{\frac{\partial G(\bx-\by,t-\tau)}{\partial \bnu(\bx)}}
\newcommand{\dkj}[1]{\frac{\partial G(\bx-\by,#1-\tau)}{\partial \bnu(\by)}}
\newcommand\ldk{\frac{\partial G_{\rm L}(\bx-\by)}{\partial \bnu(\by)}}
\newcommand\cs{\mathcal{S}}
\newcommand\cd{\mathcal{D}}
\newcommand\cvol{\mathcal{V}}
\newcommand\cinit{\mathcal{I}}
\newcommand\cV{\mathcal{V}}
\newcommand\sd{{S^{d-1}}}
\newcommand\sj[1]{{S^{#1}}}
\newcommand\ttau{t-\tau}
\newcommand\sigmanm{\sigma^{nm}}
\newcommand\ynm{Y^{m}_{n}}
\def\FI{Flatiron Institute, Simons Foundation, New York, New York 10010}
\def\njit{Department of Mathematics Sciences,
New Jersey Institute of Technology,
Newark, New Jersey 07102}
\def\nyu{Courant Institute of Mathematical Sciences,
  New York University, New York, New York 10012}
\def\upenn{Department of Mathematics,
  University of Pennsylvania, 209 South 33rd Street,
  Philadelphia, PA 19104}
\def\papertitle{Explicit unconditionally stable methods 
for the heat equation via potential theory}
\begin{document}

\begin{frontmatter}

\title{\papertitle}


\author{Alex Barnett\fnref{fi}}
\address[fi]{\FI}
\ead{abarnett@flatironinstitute.org}

\author{Charles L. Epstein\fnref{upenn}}
\address[upenn]{\upenn}
\ead{cle@math.upenn.edu}

\author{Leslie Greengard\fnref{fi,nyu}}
\address[nyu]{\nyu}
\ead{greengard@courant.nyu.edu}

\author{Shidong Jiang\fnref{njit}}
\address[njit]{\njit}
\ead{shidong.jiang@njit.edu}

\author{Jun Wang\fnref{fi}}
\ead{jwang@flatironinstitute.org}
 

\begin{abstract}
  We study the stability properties of explicit
  marching schemes for second-kind Volterra integral equations that
  arise when solving boundary value problems for the heat equation by
  means of potential theory.  It is well known that explicit finite
  difference or finite element schemes for the heat equation are stable
  only if the time step $\dt$ is of the order $\mathcal{O}(\dx^2)$,
  where $\dx$ is the finest spatial grid spacing. In contrast, for the Dirichlet and Neumann problems on the unit ball in all dimensions $d\ge 1$, we show
  that the simplest Volterra marching scheme, i.e., the forward Euler scheme,
  is {\em unconditionally stable}. Our proof is based on an explicit
  spectral radius bound of the marching matrix, leading to an estimate
  that an $L^2$-norm of the solution to the integral equation
  is bounded by $c_dT^{d/2}$ times the norm of the right hand side.
  For the Robin problem on the half space in any dimension,
  with constant Robin (heat transfer) coefficient $\kappa$,
  we exhibit a constant $C$ such that the forward Euler scheme
  is stable if $\dt < C/\kappa^2$, independent of any spatial discretization.
  This relies on new lower bounds
  on the spectrum of real symmetric Toeplitz
  matrices defined by convex sequences.
  Finally, we show that the forward Euler
  scheme is unconditionally stable for the Dirichlet problem on any smooth {\em convex} domain in any dimension, in $L^\infty$-norm.
\end{abstract}

\begin{keyword}
  heat equation \sep
  Abel equation \sep 
  forward Euler scheme \sep 
  Volterra integral equation \sep 
  stability analysis \sep
  Toeplitz matrix \sep
  convex sequence\sep
  modified Bessel function of the first kind
\end{keyword}

\end{frontmatter}
\section{Introduction}

In this paper, we study the stability of integral equation methods for the heat equation
\begin{align}
\frac{\partial u}{\partial t}(\bx, t) - \alpha\Delta u(\bx,t) &= F(\bx,t) 
\label{heateq0} \\
u(\bx,0) &= u_0(\bx) \nonumber
\end{align}
for $0 \leq t \leq T$, subject to suitable boundary conditions, in a
smooth domain $D\subset \mathbb{R}^d$.  Without loss of generality, we will
assume that the diffusion coefficient (thermal conductivity) $\alpha$ is one
in most of our discussion. We consider three standard boundary conditions:
the Dirichlet boundary condition
\be
u(\bx,t) = f(\bx,t) |_{\bx\in \Gamma,\ t>0}, \qquad \bx \in \Gamma=\partial D,
\label{dirichletbc}
\ee
the Neumann boundary condition
\be
\frac{\partial u(\bx,t)}{\partial \bnu_\bx} = g(\bx,t) |_{\bx\in \Gamma,\ t>0}, \qquad \bx \in \Gamma,
\label{neumannbc}
\ee
and 
the Robin boundary condition
\be
\label{robinbc}
\frac{\partial u(\bx,t)}{\partial \bnu_\bx} + \kappa u(\bx,t) = h(\bx,t) |_{\bx\in \Gamma,\ t>0},
\qquad \bx \in \Gamma,
\ee
Here, $\kappa>0$ is the {\em heat transfer coefficient}, and \eqref{robinbc}
models heat transfer via Newton's law of cooling \cite{crank}. For all three
boundary conditions, we assume that proper compatibility conditions are
satisfied between the initial and boundary data.

Before turning to the integral equation framework, we briefly review the finite
difference approach. For this, we assume we are given a spatial mesh discretizing 
the domain $D$ with grid points $x_n$ and seek to approximate the solution 
$u_m^n \approx u(x_n,t_m)$ at time steps $t_0,t_1,\dots,t_N$
with $t_m=m\dt$.
Two of the simplest schemes for solving \eqref{heateq0} are 
the forward and backward Euler methods:
\[
 \frac{u_n^{m+1} - u_n^m}{\dt} = \Delta_h[u]^m_n + F(x_n,t_m)
\]
and
\[
 \frac{u_n^{m+1} - u_n^m}{\dt} = \Delta_h[u]^{m+1}_n + F(x_n,t_m)~,
\]
respectively. Here 
$\Delta_h[u]^m_n$ denotes the finite difference 
approximation of the Laplacian evaluated at the grid point $x_n$ at time $t_m$.
It is well known that 
the backward Euler scheme is unconditionally stable, while, in $d$ dimensions, the forward Euler scheme requires that the time step satisfy the  condition
$\dt < \frac{1}{2d} \dx^2$, for the case of the standard 2nd-order finite difference Laplacian stencil on a uniform spatial grid with step size $\dx$ in each direction
(see, for example, \cite[p.~158]{thomas}). The constraint changes when using less standard stencils.
For nonuniform grids, the time step restriction is more complicated to analyze, but 
generally requires that $\dt = \mathcal{O}(h_{min}^2)$ where $h_{min}$ is the finest mesh spacing
in the discretization. 

The backward Euler scheme is {\em implicit} and requires the solution of a large
sparse linear system at each time step $t_m$.
The forward Euler scheme, on the other hand, is {\em explicit} and inexpensive. The stability
restriction, however, 
forces extremely small time steps to be taken, making long-time simulations
impractical. This has spurred the development
of a variety of alternative approaches, including locally one-dimensional schemes,
alternating direction implicit methods, etc. \cite{adi}.

When finite difference methods are used to solve general 
initial-boundary value problems, GKSO (Gustafsson--Kreiss--Sundstr{\" o}m--Osher)
theory plays a critical role 
~\cite{gustafsson,gustafsson1972mcom,osher1969mcom,strikwerda,trefethen1983jcp},
and requires that
the interior marching scheme be Cauchy stable (that is, beyond the stability condition above,
the discrete boundary conditions must satisfy additional criteria).
In short, stability imposes rather intricate constraints on the coupling between the
interior marching scheme and the boundary conditions themselves.
Similar considerations are involved when using finite element methods.

An alternative to direct discretization of the governing PDE is to recast the
problem as a boundary integral equation using heat potentials \cite{kress2014,pogorzelski}.
The Green's function for the heat equation is
\be
G(\bx,t)
=\frac{1}{(4\pi t)^{d/2}}e^{-\frac{|\bx|^2}{4t}},
\quad \bx \in \mathbb{R}^d~.
\ee
We assume that the boundary $\Gamma$ of $D$ is at
least $C^2$, and let $\sigma$ be a square integrable function on $\Gamma \times [0,T]$.
Then the single layer heat potential $\cs$ is defined by the formula
\be\label{slpdef}
\cs[\sigma](\bx,t) = \int_0^t\int_\Gamma
G(\bx-\by,t-\tau)\sigma(\by,\tau)ds(\by)d\tau
\ee
and the double layer heat potential $\cd$ is defined by 
\be\label{dlpdef}
\cd[\sigma](\bx,t) = \int_0^t\int_\Gamma
\dk
\sigma(\by,\tau)ds(\by)d\tau,
\ee
where $\bnu(\by)$ is the unit outward normal vector at $\by\in \Gamma$.
The initial potential is defined by 
\be\label{ipotdef}
\cinit[u_0](\bx,t) = \int_D
G(\bx-\by,t-\tau)u_0(\by)d\by
\ee
and the volume potential is defined by 
\be\label{vpotdef}
\cvol[F](\bx,t) = \int_0^t \int_D
G(\bx-\by,t-\tau) F(\by,\tau) d\by d\tau.
\ee
By the linearity of the problem, we may decompose the solution
into
\be
u(\bx,t)=u^{(F)}(\bx,t)+u^{(B)}(\bx,t),
\ee
where all initial and volume data is captured by the free-space volume forced term
\be
u^{(F)}(\bx,t)= \cinit[u_0](\bx,t) + \cvol[F](\bx,t)~,
\label{uF}
\ee
while $u^{(B)}$ is the solution to a pure boundary value problem
with zero initial data, zero volume forcing, and modified boundary data.
Note that $u^{(F)}$ needs only {\em evaluation}
of initial and volume potentials; it requires no linear solve.
Thus, there is no stability
issue with $u^{(F)}$, and its error is simply the quadrature error
in evaluating the integrals that appear in \eqref{ipotdef} and \eqref{vpotdef}.
In other words, unlike finite difference or finite element methods,
the volume part is completely decoupled from the boundary part
in integral equation methods from the perspective of stability analysis.

For the Dirichlet problem, we proceed by representing $u^{(B)}(\bx,t)$ as a double layer
potential with unknown density $\sigma$. The jump relation (see section~\ref{sec:potentials}) then leads to
the following second kind Volterra equation,
\be\label{bie}
\left(-\half+\cd \right)[\sigma](\bx,t)= 
\tilde{f}(\bx,t), \qquad (\bx,t)\in \Gamma\times [0,T],
\ee
where $\cd$ is interpreted in a principal value sense,
and the corrected data is
\[ \tilde{f}(\bx,t) \;:=\; f(\bx,t) - u^{(F)}(\bx,t)~,
\qquad \bx\in\Gamma~.
\]

The main objective of this paper is to demonstrate certain advantages of integral
equation methods by giving, for several combinations of archetypal geometries and boundary conditions, rigorous stability bounds for
the simplest explicit time marching scheme, namely the forward Euler scheme.
This scheme is derived by assuming $\sigma(\by,t)$
is piecewise constant over each time interval $[j\dt,(j+1)\dt)$,
taking on the value $\sigma(\by,j\dt)$.
For \eqref{bie}, this leads to a marching scheme of the form 
\begin{align}
 \sigma(\bx, n \dt) \;=\; & 2 \sum_{j = 0}^{n-1}
 \int_{j\dt}^{(j+1)\dt} \int_\Gamma
\frac{\partial G(\bx-\by,n\dt -\tau)}{\partial \bnu(\by)}
\sigma(\by,j\dt)ds(\by)d\tau \nonumber \\
&\qquad - \; 2\tilde{f}(\bx, n\dt)~,  \qquad \qquad n=1,2,\dots
\label{biemarch}
\end{align}
This falls into the class of collocation
schemes \cite[Sec.~13.3]{kress2014},
as well as convolution quadrature schemes \cite{lubich1986ima}.
It is explicit, since $\sigma(\bx,n\dt)$ does not
appear on the right-hand side.
It is also first-order accurate (e.g.\ see section \eqref{s:robin1d}).
For the Neumann and Robin problems, second kind Volterra
equations are obtained by representing $u^{(B)}(\bx,t)$
instead as a single layer potential; other than a change of kernel, the forward Euler scheme remains the same.

The principal reasons that integral equation
methods have received relatively little attention for solving the heat equation 
has been that direct evaluation of layer (or volume) potentials require quadratic
work in the total number of unknowns as well as the design of suitable quadrature
rules. Recent advances in fast algorithms
for heat potentials, however, have removed this obstacle.
We refer the reader to 
the papers 
\cite{greengard1990cpam,greengard1991sisc,greengard1998fgtnew,powerheat,
lubichheat2,lubichheat1,strain_adapheat,tauschheat1,wang2017nyu,wang2018,fullheatsolver} 
and the references therein for further discussion. 

We now summarize the results in this paper.
Perhaps the simplest geometry is the half-space
$D=\mathbb{R}^d_+ := \{\bx=(x_1,x_2,\cdots,x_d)\in \mathbb{R}^d\, |\, x_d\ge 0\}$
with $\Gamma=\partial D=\mathbb{R}^{d-1}$.
It is easy to check that the integral kernel of $\mathcal{D}$ 
is identically zero on $\Gamma$, so \eqref{bie} reduces to
\be
\sigma(\bx,t)=-2 \tilde f (\bx,t)~.
\label{trivial}
\ee
This is an analytic solution, so that stability follows trivially.
A similar trivial analytic solution arises when the single layer potential is
used to solve the Neumann problem on the half-space.
Thus, we consider the Dirichlet and Neumann problems on possibly
the next-simplest domain,
the unit ball $B^d\subset \mathbb{R}^d$ (i.e., $\Gamma$ is
the unit sphere $S^{d-1}$).
For both these latter cases, we show that the forward Euler scheme
is unconditionally stable in all dimensions $d\ge 1$. Specifically, we show that
for $T\ge 1$, 
\be
\|\sigma\|_2 \le c_d T^{d/2} \|\tilde{f}\|_2
\label{diribound}
\ee
for all $N$, $\dt$ such that $N\dt \le T$.
Here $N$ is the total number of time steps, $\dt$ is the time step size,
$\|\cdot \|_2$ denotes a space-time $L^2$-norm,%
\footnote{Explicitly, $\|\sigma\|_2^2 := \sum_{j=0}^N \int_\Gamma \sigma(\bx,j\dt)^2 ds(\by)$, i.e.\ the norm is $l^2$ in time $[0,T]$ but $L^2$ over the surface $\Gamma$.}
and $c_d$ is a positive constant depending on $d$.
The estimate \eqref{diribound} is obtained by a
Gershgorin spectral radius bound of the marching matrix; we show
that this is no longer tight for the Dirichlet problem if a fairly
mild condition is imposed on $\dt$. Indeed, we are able to show the
improved estimate in two dimensions,
\be
\|\sigma\|_2 \le 7 \|\tilde{f}\|_2
\label{diribound2}
\ee
for $\dt\le 1$ and any $N$.

Returning to the $d$-dimensional half-space, the simplest  boundary condition for which the integral equation is non-trivial is the Robin condition. 
We show that here the forward Euler scheme
has a time step restriction determined by the
physical parameter $\kappa$, namely $\dt < \frac{\pi}{c^2 \kappa^2}$
with $c =3-\sqrt{2}$.
Finally, considering more general domains, we prove that the forward Euler scheme for the Dirichlet problem is unconditionally stable  for smooth convex domains in all dimensions, in the $L^\infty$-norm.

Firstly, in section \ref{sec:potentials} we summarize the necessary
properties of layer potentials.  Then in section \ref{sec:normbounds}
we present a lower bound for the spectrum of a Toeplitz operator
defined by a convex sequence; this will be needed later to handle
cases where the sequences are not summable and thus Gershgorin is
inapplicable.  The Dirichlet and Neumann problems on the unit ball are
then treated in section \ref{sec:diri}, the Robin problem on the half
space in section \ref{sec:robin}, and  the Dirichlet problem on
$C^1$ convex domains in section \ref{sec:convex}. We conclude in
section~\ref{s:conc}. Finally, an appendix covers estimates on special
functions used in the body of the paper.

\section{Properties of heat potentials} \label{sec:potentials}

By construction, the single and double layer heat potentials
\eqref{slpdef} and
\eqref{dlpdef} satisfy the heat equation. They also satisfy certain
well-known jump conditions when the target point $\bx$ approaches the boundary
from either side \cite{kress2014,pogorzelski}.
In particular, for $\bx_0 \in \Gamma$,
the normal derivative of the single layer potential
$\cs[\sigma]$ satisfies the relation
\be\label{snjump}
\lim_{\epsilon\rightarrow 0+}
\frac{\partial \cs[\sigma](\bx_0 \pm \epsilon \bnu(\bx_0 ),t)}{\partial \bnu(\bx_0)}
= \mp\half\sigma(\bx_0,t) + \cs_{\bnu}[\sigma](\bx_0,t),
\ee
and the double layer potential 
$\cd[\sigma]$ satisfies the relation
\be\label{dlpjump}
\lim_{\epsilon\rightarrow 0+}
\cd[\sigma](\bx_0 \pm \epsilon \bnu(\bx_0 ),t)
= \pm\half\sigma(\bx_0,t) + \cd[\sigma](\bx_0,t),
\ee
where both $\cs_{\bnu}[\sigma](\bx_0,t)$ and $\cd[\sigma](\bx_0,t)$
are interpreted in the Cauchy principal value sense.
If we represent the solution to the heat equation
\eqref{heateq0} via a double layer potential
$u(\bx,t)=\mathcal{D}[\sigma](\bx,t)$, then the 
integral equation \eqref{bie} follows immediately from the jump relation \eqref{dlpjump}.

The kernel of the double layer potential is given explicitly by
\be\label{dkernel}
\dk
=\frac{(\bxy)\cdot\bnu(\by)}{2^{d+1}\pi^{d/2}(t-\tau)^{1+d/2}}e^{-\frac{\lbxy^2}{4(t-\tau)}}
\ee
and the kernel of $\cs_{\bnu}$ is given by 
\[
\snd 
=-\frac{(\bxy)\cdot\bnu(\bx)}{2^{d+1}\pi^{d/2}(t-\tau)^{1+d/2}}e^{-\frac{\lbxy^2}{4(t-\tau)}}.
\]

Finally, the initial potential \eqref{ipotdef} is well known to satisfy 
the homogeneous heat equation
with initial data $u_0(\bx)$, while the volume potential 
\eqref{vpotdef} satisfies the inhomogeneous heat equation 
\[
\frac{\partial u}{\partial t}(\bx, t) - \Delta u(\bx,t) = F(\bx,t)
\]
with zero initial data.

\begin{remark}
Using these properties, it is straightforward to see that representing the solution
to the Dirichlet problem in the form
\[ u(\bx,t) = \cd[\sigma](\bx,t) + \cinit[u_0](\bx,t) + \cvol[F](\bx,t) \]
leads to the integral
equation \eqref{bie}, with the only unknown corresponding to the double layer density
$\sigma$.
\end{remark}

\begin{remark}
On the unit sphere $\sd$, $\bnu(\by)=\by$ and $|\bx|=|\by|=1$.
Thus, $(\bxy)\cdot\bnu(\by)=-(1-\xdy)$, $\lbxy^2=2(1-\xdy)$,
and \eqref{dkernel} reduces to
\be\label{dksphere}
\dk
=-\frac{1-\xdy}{2^{d+1}\pi^{d/2}(t-\tau)^{1+d/2}}e^{-\frac{1-\xdy}{2(t-\tau)}}.
\ee
\end{remark}

\section{Spectral bounds for real symmetric Toeplitz operators} \label{sec:normbounds}

Although for many of the later results we can use simple
Gershgorin spectral bounds for matrices, for the tight bound for the zeroth mode of the $d=2$ Dirichlet disc (section~\ref{sec:tighter}), and the Robin case in the half-space (section~\ref{sec:robin}),
a more delicate spectral bound on Toeplitz matrices is needed.

Let $S^1$ be the unit circle in the complex plane, parametrized by polar angle $\theta$
with normalized arc length measure $d\lambda=\frac{1}{2\pi}d\theta$.
For any $f$ in the Hilbert space $L^2(S^1)$, we write
\be
f(\theta)=\sum_{n=-\infty}^\infty f_n e^{in\theta},
\ee
in terms of the orthogonal basis $\{ e^{in\theta} \}_{n\in\mathbb{Z}}$,
where $f_n$ ($n\in\mathbb{Z}$) is the $n$th Fourier coefficient of
$f$ defined by
\[
f_n=\frac{1}{2\pi}\int_0^{2\pi}f(\theta)e^{-in\theta}d\theta.
\]
The Hardy space $H^2$ is defined by
\[
H^2=\{f\in L^2(S^1) \, | \, f_n=0, n<0\},
\]
and we let $P$ denote the orthogonal projection of $L^2(S^1)$ onto $H^2$.
The Toeplitz operator $T_f: H^2\rightarrow H^2$ with symbol $f\in L^\infty(S^1)$,
is defined by 
\[
T_f(u)=P(fu)~.
\]

The operator $T_f$ is closely related to an 
infinite-dimensional Toeplitz matrix with entries $t_{ij},\ i,j \in \mathbb{N}$ that satisfy
$t_{ij}=t_{i+1,j+1}$ for all $i,j$. That is, the matrix is constant along diagonals and
determined by a two-sided sequence $(t_n)_{n\in\mathbb{Z}}$ with $t_{ij}=t_{i-j}$.
The Fourier transform maps $T_f$ onto the class of Toeplitz matrices on $l^2(\mathbb{Z}_+)$;
that is, if $\left(T_f(u)\right)_n$ denotes the $n$th Fourier coefficient of 
$T_f(u)$, then
\[
\bigl(T_f(u)\bigr)_n= \left\{\begin{array}{ll}
\sum_{m=0}^\infty f_{n-m}u_m~,& n \ge 0 \\
0~, & n< 0 \end{array}\right.
\]
where $u_m$ is the $m$th Fourier coefficient of $u$.
\begin{definition}
  A sequence $\{a_n\}_{n\in\mathbb{Z_+}}$ is said to be convex if
  $\delta^2a_n\geq 0$ for every $n>0$, where $\delta^2a_n := a_{n-1}-2a_n+a_{n+1}$ is the central second difference.
  \end{definition}
Recall that for $n\in\mathbb{Z_+}$ the Fej\'{e}r kernel $F_n(x)$ is defined to be
  \[
  F_n(\theta)=\sum_{j=-n}^{n}\left(1-\frac{|j|}{n+1}\right)e^{ij\theta}=\frac{1}{n+1}\left[\frac{\sin\left(\frac{n+1}{2}\theta\right)}{\sin\left(\frac{\theta}{2}\right)}\right]^2.
  \]

The following theorem can be found in \cite[Chapter 1, Theorem 4.1]{katznelson}.
\begin{theorem}\label{convexfourier}
  If $a_n\rightarrow 0$ and the sequence $\{a_n\}_{n\in\mathbb{Z}_+}$
  is convex, then the series
  \be
  v(\theta)=\sum_{n=1}^{\infty}n\, (\delta^2 a_n) \, F_{n-1}(\theta)
  \label{toepl_symb}
  \ee
  converges in $L^1([-\pi,\pi])$ to a non-negative function, which is continuous except at $0,$ such that
  $v_n=a_n$.
\end{theorem}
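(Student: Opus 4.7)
The plan is to combine two Abel summations with the non-negativity of the Fej\'er kernel, plus a standard tail estimate for non-increasing summable sequences. First I would establish the telescoping identity
\[
\sum_{n=1}^N n \, \delta^2 a_n \;=\; a_0 - a_N + N\,\delta a_N,
\]
by writing $\delta^2 a_n = \delta a_n - \delta a_{n-1}$ with $\delta a_n := a_{n+1}-a_n$ and reindexing. Under the hypotheses, $\delta a_n$ is non-decreasing (by convexity) and must tend to $0$ (forced by $a_n\to 0$), so $-\delta a_n \ge 0$ is non-increasing with $\sum_{n\ge 0}(-\delta a_n) = a_0 < \infty$; the standard fact that a non-increasing non-negative summable sequence $b_n$ satisfies $nb_n\to 0$ then gives $N\,\delta a_N \to 0$. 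Passing to the limit yields $\sum_{n=1}^\infty n\,\delta^2 a_n = a_0$.

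With this bound, $L^1$-convergence of \eqref{toepl_symb} is immediate. Each Fej\'er kernel satisfies $F_{n-1} \ge 0$ and $\|F_{n-1}\|_{L^1([-\pi,\pi])} = 2\pi$, while $n\,\delta^2 a_n \ge 0$ by convexity, so
\[
\sum_{n=1}^\infty \bigl\|\,n(\delta^2 a_n) F_{n-1}\,\bigr\|_{L^1} \;=\; 2\pi \sum_{n=1}^\infty n\,\delta^2 a_n \;=\; 2\pi\, a_0 \;<\; \infty,
\]
and the series converges absolutely in $L^1$ to a non-negative limit $v$.

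Next I would identify the Fourier coefficients. Since the Fourier coefficient of $F_{n-1}$ at frequency $k$ equals $\max(0,\,1-|k|/n)$, $L^1$-convergence justifies term-by-term integration:
\[
v_k \;=\; \sum_{n=|k|+1}^\infty n\,\delta^2 a_n \cdot \bigl(1-|k|/n\bigr)
\;=\; \sum_{n=|k|+1}^\infty (n-|k|)\,\delta^2 a_n.
\]
Substituting $m = n-|k|$ and applying the above Abel identity to the shifted convex sequence $\tilde a_m := a_{m+|k|}$ (which also tends to $0$) yields $v_k = a_{|k|}$, matching $a_n$ for $n\ge 0$.

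Continuity off the origin follows from the pointwise bound $F_{n-1}(\theta) \le 1/(n\sin^2(\theta/2))$, which together with the telescoping estimate $\sum_{n=1}^\infty \delta^2 a_n = -\delta a_0 < \infty$ dominates the series uniformly on $\{|\theta|\ge \epsilon\}$ for each $\epsilon>0$; since the partial sums are trigonometric polynomials, $v$ is continuous on $(-\pi,\pi)\setminus\{0\}$. The main (mild) obstacle is the justification that $N\,\delta a_N \to 0$ — this is the one place where convexity and the decay hypothesis $a_n\to 0$ must genuinely be used together; everything else is essentially bookkeeping.
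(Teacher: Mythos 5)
The paper does not prove this statement: it is quoted verbatim from Katznelson's \emph{An Introduction to Harmonic Analysis} (Chapter~1, Theorem~4.1), so there is no ``paper's proof'' to compare against. Your proof is correct and is essentially the standard textbook argument for this result. The three key pieces are all in order: the Abel-summation identity $\sum_{n=1}^N n\,\delta^2 a_n = a_0 - a_N + N\,\delta a_N$ checks out by direct reindexing; the deduction that $N\,\delta a_N\to 0$ correctly combines convexity (so $\delta a_n$ is non-decreasing, hence $\le 0$ and $\to 0$ given $a_n\to 0$) with Olivier's lemma applied to the summable non-increasing sequence $-\delta a_n$; and the Fourier-coefficient identification $v_k = a_{|k|}$ correctly reduces to the same Abel identity applied to the shifted sequence $\tilde a_m = a_{m+|k|}$, which inherits convexity and decay. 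The $L^1$-convergence via $\|F_{n-1}\|_{L^1}=2\pi$ (with the paper's $\tfrac{1}{2\pi}$ normalization) and the continuity argument via $F_{n-1}(\theta)\le 1/(n\sin^2(\theta/2))$, $\sum\delta^2 a_n = a_0-a_1<\infty$, and Weierstrass $M$-test are also correct. You rightly identified the one non-bookkeeping step ($N\delta a_N\to 0$) as the place where both hypotheses are genuinely used.
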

It is often the case that the function $v(\theta)$ blows up as $\theta\to 0.$ Using the elementary estimate on the Fej\'{e}r kernel
\[
F_n(\theta)\leq \min\left\{(n+1),\frac{\pi^2}{(n+1)\theta^2}\right\},
\]
\cite[Chapter 1, formula 3.10]{katznelson} and the fact that, for a convex sequence tending 
to zero, we have $\lim_{n\to\infty}n(a_n-a_{n+1})=0,$
one can show that
\be\label{eqn10.00}
\lim_{\theta\to 0}\theta v(\theta)=0~.
\ee

Bounds on the spectrum of finite Toeplitz matrices are of interest in
many applications \cite{dembo1988,hertz1992,laudadio2008,melman1999}.
When a real symmetric Toeplitz operator (or matrix) is generated by a
positive sequence, the Gershgorin circle theorem~\cite[\S3.3]{thomas}
often gives a satisfactory upper bound on its spectral radius or the
largest eigenvalue. Curiously, satisfactory lower bounds on the
smallest eigenvalue do not seem to be available.  The following
theorem leads to a tight lower bound on the smallest eigenvalue of a
real symmetric Toeplitz matrix, defined by a convex sequence, even
when the operator it defines is unbounded.
\begin{theorem}\label{lowerbound}
Suppose that
  $\{v_n\}_{n\in\mathbb{N}}$ is a convex sequence and $\lim_{n\rightarrow\infty}v_n=0.$ Set $v_0=2v_1-v_2,$ and let $v(\theta)$ be the non-negative function defined by the sequence
  $\{v_n\}_{n\in\mathbb{Z}_+}$ as in Theorem~\ref{convexfourier}.
  Suppose that $V$ is the self-adjoint Toeplitz matrix defined by 
  $V_{ii}=0$ and $V_{ij}=v_{|i-j|}$. Then,
  for any $\mbf{u}\in\mathbb{C}^N$, we have the lower bound
\[
  \langle V\mbf{u},\mbf{u}\rangle\geq (v_2-2v_1)\|\mbf{u}\|^2.
\]
\end{theorem}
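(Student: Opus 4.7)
My strategy is to convert $\langle V\mbf u,\mbf u\rangle$ into an integral against a non-negative symbol, and then peel off the missing diagonal as a scalar shift. The crucial observation is that the choice $v_0:=2v_1-v_2$ is designed precisely so that
\[
\delta^2 v_1 \;=\; v_0-2v_1+v_2 \;=\; 0 \;\geq\; 0,
\]
which means that the \emph{extended} sequence $\{v_n\}_{n\in\mathbb{Z}_+}$ is convex from index $0$ (and still tends to zero). Theorem~\ref{convexfourier} therefore applies and produces a non-negative function $v(\theta)\in L^1([-\pi,\pi])$ whose Fourier coefficients are exactly the $v_n$.

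Next, introduce the auxiliary self-adjoint Toeplitz matrix $\tilde V$ with entries $\tilde V_{ij}=v_{|i-j|}$ for all $i,j$ (this time including the diagonal $\tilde V_{ii}=v_0$), so that
\[
V \;=\; \tilde V - v_0\, I.
\]
For a vector $\mbf u=(u_1,\dots,u_N)\in\mathbb{C}^N$, form the trigonometric polynomial $U(\theta)=\sum_{k=1}^N u_k e^{ik\theta}$. Expanding $|U(\theta)|^2=\sum_{j,k}u_j\bar u_k e^{i(j-k)\theta}$ and integrating term by term against the Fourier series of $v$ yields the Parseval-type identity
\[
\langle \tilde V\mbf u,\mbf u\rangle
\;=\;
\frac{1}{2\pi}\int_{-\pi}^{\pi} v(\theta)\,|U(\theta)|^2\,d\theta.
\]
Since $v\geq 0$ a.e.\ and $|U|^2\geq 0$, the right-hand side is non-negative, so $\tilde V\succeq 0$. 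Subtracting $v_0 I$ gives the claimed bound:
\[
\langle V\mbf u,\mbf u\rangle \;=\; \langle \tilde V\mbf u,\mbf u\rangle - v_0\|\mbf u\|^2
\;\geq\; -v_0\|\mbf u\|^2 \;=\; (v_2-2v_1)\|\mbf u\|^2.
\]

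The conceptually hard part is spotting that the apparently ad hoc value $v_0=2v_1-v_2$ is exactly what lets Theorem~\ref{convexfourier} recognize the (now zero-extended) second differences as non-negative Fej\'er coefficients; once this is noticed, the argument reduces to the classical fact that a Toeplitz matrix with a non-negative symbol is positive semi-definite. The one technical caveat is that $v$ is only guaranteed to lie in $L^1$, not $L^\infty$ (indeed $v$ may blow up at $\theta=0$, as recorded in \eqref{eqn10.00}), but this causes no difficulty because $|U|^2$ is a bounded trigonometric polynomial with finitely many modes, so the integral identity and the positivity conclusion remain valid verbatim.
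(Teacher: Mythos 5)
Your proof is correct and takes essentially the same route as the paper's: both integrate $v(\theta)$ against $|U(\theta)|^2$ to deduce positivity, then subtract the $v_0\|\mathbf u\|^2$ diagonal contribution. Your explicit factorization $V=\tilde V - v_0 I$ and the remarks on $\delta^2 v_1=0$ and the $L^1$ symbol issue are just a more spelled-out version of the same argument.
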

\begin{proof}        
  For a finite length vector $\mbf{u}=(u_0,\dots,u_N,0,0,\dots)_{n\in\mathbb{Z}_+},$ define the function
  \be
  u(\theta)=\sum_{n=0}^N u_n e^{in\theta}.
  \ee
  Theorem~\ref{convexfourier} implies  that
\[
  \begin{split}
 0\leq \frac{1}{2\pi}\int_0^{2\pi}v(\theta)|u(\theta)|^2d\theta &=\frac{1}{2\pi}\int_0^{2\pi}v(\theta)
 \sum_{0\leq j,k\leq N}u_j\bar{u}_k e^{i(j-k)\theta}d\theta\\
 &=\sum_{0\leq j,k\leq N}v_{k-j}u_j\bar{u}_k\\
  &=\langle V\mbf{u},\mbf{u}\rangle+ (2v_1-v_2)\|\mbf{u}\|^2.
   \end{split}
\]
\end{proof}   
\begin{remark}\label{finitetoeplitz}
  If $V_N$ is the upper left $N\times N$ principal submatrix of $V$, then,
  by an application of the Rayleigh--Ritz theorem, its spectrum is bounded below by $(v_2-2v_1).$
\end{remark}
\begin{remark}
  For certain applications the sequence, $\{v_n\}_{n\in\mathbb{N}}$,
  generating $T_v$ is not convex.  In this case, one may consider an
  operator of the form, $cI+T_v+T_a$ with $c$ and
  $\{a_n\}_{n\in\mathbb{N}}$ chosen so that $(c, v_1+a_1, v_2+a_2,
  \ldots)$ is a convex sequence. If $T_a$ is a bounded operator, then
  the previous theorem implies a lower bound on the spectrum of $V$
  \[
 \langle V\mbf{u},\mbf{u}\rangle  \geq -(c +\|T_a\|)\|\mbf{u}\|^2\text{ for }\mbf{u}\in\mathbb{C}^N.
  \]
\end{remark}
\begin{remark}
  If the function $v(\theta)$ defined in~\eqref{toepl_symb} is unbounded, then the Toeplitz operator, $T_v,$ it defines is not a
  bounded operator, and is not defined on all of $H^2.$ The discussion above easily applies to show that this operator is defined on a dense subset, and its closure is self-adjoint:
  Equation~\eqref{eqn10.00} implies that if $u\in H^2,$ then
  $v(1-e^{i\theta})u\in L^2.$ Thus, $T_vw=P(vw)\in H^2,$ for $w$ in the
  subspace $(1-e^{i\theta})H^2.$ It is not difficult to see that this
  subspace is dense. If $u\in H^2$ and $r>1,$ then
\[
\left(\frac{1-e^{i\theta}}{r-e^{i\theta}}\right)u\in H^2
\]
and
\[
\lim_{r\to 1^+}\left\|\left(\frac{1-e^{i\theta}}{r-e^{i\theta}}\right)u-u\right\|_2=0.
\]
Since $\langle T_v w,w\rangle\geq 0,$ for $w$ in this domain, the
Friedrichs extension of $T_v$ is a closed self-adjoint, non-negative
operator defined on a dense subspace $D_v\subset H^2.$
\end{remark}

\section{The Dirichlet and Neumann problems on the unit ball $B^d$} \label{sec:diri}

We consider first the forward Euler scheme \eqref{biemarch}
for the Dirichlet problem \eqref{bie}.
For general $d\ge 1$,
our approximation of the unknown density $\sigma$ is piecewise
constant in time,
\[
\sigma(\by,\tau)=\sigma(\by,t_j)=\sigma_j(\by),
\quad \tau\in [t_j,t_{j+1}) \qquad {\rm for}\ j=0,1,\ldots.
\]
where $t_j=j\dt$. We restate \eqref{biemarch} in the form
\be\label{heatrec}
-\half\sigma_j(\bx)+\sum_{k=0}^{j-1}V_{j-k}[\sigma_k](\bx)=f_j(\bx)
:=f(\bx,j\dt),
\ee
for $j=0,1,2,\ldots$,
where the tilde has been dropped from $f$, and where
the action of each spatial integral operator $V_{j-k}: C(\Gamma)\to C(\Gamma)$
is defined by
\[
V_{j-k}[\sigma_k](\bx)= \int_\Gamma \cV_{j-k}(\bx,\by)
\sigma_k(\by) ds(\by) ~.
\]
Here the operator kernel is itself the integral of the heat kernel over
one time-step,
\[
\cV_{j-k}(\bx,\by)=\int_{k\dt}^{(k+1)\dt}\dkj{j\dt}d\tau.
\]
Due to time-shift invariance, a simpler way to write the spatial kernel is
\[
\cV_l(\bx,\by)=\int_{0}^{\dt}\dkj{l\dt}d\tau, \quad l\geq 1,~
\]
and $\cV_0(\bx,\by)$ is set identically to $0$. For initialization
of time-stepping we set $\sigma_0 \equiv f_0 \equiv 0$.

\subsection{The Dirichlet problem in one dimension} \label{sec:1ddirichlet}

The boundary $\Gamma$ of the unit ball in one dimension consists of
only two points $\bx=\pm 1$.
Let the time-stepped density at these two points be $\bsigma^\pm=\{\sigma_j^\pm\}_{j=0}^N$,
and the data $\mbf{f}^\pm=\{f^\pm_j\}_{j=0}^N$.
We will stack each pair into a single column, e.g.\ $[\bsigma^-\, ,\, \bsigma^+]^T$.
Recalling \eqref{trivial}, the density at each boundary point is
trivially coupled to the data at that same point; however,
the coupling to the other boundary point will involve the double layer kernel
acting at a distance of 2.
Thus, \eqref{heatrec} becomes a $2\times 2$ system with trivial diagonal
blocks and Toeplitz off-diagonal blocks. Namely, after $N$ time-steps
the stacked vectors are related by,
\be\label{mode1d}
\left[\begin{matrix} -\frac{I}{2}& V\\ V& -\frac{I}{2}\end{matrix}\right]
\left[\begin{matrix} \bsigma^- \\ \bsigma^+\end{matrix}\right]=
\left[\begin{matrix} \mbf{f}^-\\ \mbf{f}^+\end{matrix}\right]~,
\ee
where $I$ is the size-$(N+1)$ identity matrix.
Here the action of the lower-triangular Toeplitz matrix $V\in\mathbb{R}^{(N+1)\times(N+1)}$
is given by
\[
[V\bsigma^{\pm}]_j=\sum_{k=0}^{j-1}v_{j-k}\sigma_k^{\pm}
\qquad \text{ for }j=1,\dots,N,
\]
with the convolution coefficients $\{v_l\}$  given by 
\be\label{vcoef1d}
v_l=-
\int_{0}^{\dt}
\gamma(l\dt-\tau)d\tau, \quad l\geq 1~, \quad \text{ and } v_0=0~.
\ee
Here the underlying kernel is the double layer acting at a distance of 2,
\be
\gamma(t) \; :=\;
\frac{1}{2\sqrt{\pi}} t^{-3/2}e^{-\frac{1}{t}}, \qquad   t > 0~.
\label{gamma1d}
\ee
We denote the symmetric part of $V$ 
by $W$, and make its dependence on $N$ and $\dt$ explicit, thus
\be\label{w1d}
W(N;\dt) \; :=\;  \frac{V+V^T}{2}.
\ee

We have the following lemma.
\begin{lemma}
  Fix $T>0$. Then, for any $N$ and $\dt$ with $N\dt\leq T$,
  the spectral radius $\rho(N;\dt)$ of the matrix $W(N;\dt)$
  has the bound
  \be
  \rho(N;\dt) \;\le\; C_1(T) ~,
  \label{maxbound1d}
  \ee
where
\be
C_1(T)\;:= \;
\int_{0}^{T} \gamma(T-\tau)d\tau
\; = \; \frac{1}{2\sqrt{\pi}}
\int_{\frac{1}{T}}^{\infty}\frac{1}{\sqrt{u}}e^{-u}du \;<\; \rhalf ~.
\label{C1T}
\ee
\label{l:C1T}
\end{lemma}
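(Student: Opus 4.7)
The plan is to combine a Gershgorin-type row-sum bound on the real symmetric matrix $W(N;\dt)$ with an explicit substitution that turns its row sums into a definite integral of $\gamma$. Since $v_0=0$, the diagonal of $W$ vanishes; since $\gamma>0$ on $(0,\infty)$, \eqref{vcoef1d} gives $v_l<0$ for every $l\ge 1$. Because $W$ is real symmetric, its eigenvalues are real and by Gershgorin's theorem they lie in the discs centered at $W_{jj}=0$ with radii $\sum_{k\ne j}|W_{jk}|$. Hence
\[
\rho(N;\dt) \;\le\; \max_{0\le j\le N}\sum_{k\ne j}|W_{jk}|
\;=\; \max_{0\le j\le N}\frac{1}{2}\sum_{k\ne j}(-v_{|j-k|}).
\]

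The first technical step I would do is to rewrite $-v_l$ as an integral of $\gamma$ over a single time step. The substitution $s=l\dt-\tau$ in \eqref{vcoef1d} yields $-v_l=\int_{(l-1)\dt}^{l\dt}\gamma(s)\,ds$ for each $l\ge 1$, and the resulting sums telescope: $\sum_{l=1}^{M}(-v_l)=\int_0^{M\dt}\gamma(s)\,ds$. Splitting the row-$j$ sum into its pieces $k<j$ and $k>j$ (the index $|j-k|$ then runs from $1$ to $j$, and separately from $1$ to $N-j$) gives
\[
\sum_{k\ne j}|W_{jk}| \;=\; \frac{1}{2}\!\left[\int_0^{j\dt}\!\gamma(s)\,ds+\int_0^{(N-j)\dt}\!\gamma(s)\,ds\right]
\;\le\;\int_0^{N\dt}\!\gamma(s)\,ds\;\le\;\int_0^T\!\gamma(s)\,ds,
\]
where the two final inequalities use $\gamma\ge 0$ and the hypothesis $N\dt\le T$.

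The remaining step is to identify this upper bound with $C_1(T)$ and verify $C_1(T)<\rhalf$. The substitution $\tau\mapsto T-\tau$ shows $C_1(T)=\int_0^T\gamma(T-\tau)\,d\tau=\int_0^T\gamma(s)\,ds$, and then $u=1/s$ with $ds=-du/u^2$ converts this into a tail of the Gamma integral,
\[
C_1(T)\;=\;\frac{1}{2\sqrt{\pi}}\int_{1/T}^{\infty}\frac{e^{-u}}{\sqrt{u}}\,du
\;<\;\frac{1}{2\sqrt{\pi}}\int_0^\infty\frac{e^{-u}}{\sqrt{u}}\,du
\;=\;\frac{\Gamma(1/2)}{2\sqrt{\pi}}\;=\;\frac{1}{2}.
\]

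I do not anticipate a genuine obstacle in this lemma; it is essentially a Gershgorin bound dressed up by two changes of variable. The only bits of bookkeeping are (i) that $v_l$ is negative so absolute values flip the sign, and (ii) the row-sum indexing near the endpoints $j=0$ and $j=N$ where one of the two partial sums is empty. The bound is in fact a bit loose at interior rows, since the two one-sided integrals cannot simultaneously attain $\int_0^{N\dt}\gamma$; this slack is harmless for the lemma as stated but would become relevant if one wanted a sharper constant.
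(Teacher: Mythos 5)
Your proof is correct and follows essentially the same route as the paper: a Gershgorin row-sum bound on the zero-diagonal symmetric Toeplitz matrix $W$, followed by the telescoping substitution that converts $\sum_l |v_l|$ into $\int_0^{N\dt}\gamma$, then the changes of variables to identify the integral with $C_1(T)$ and compare against $\Gamma(1/2)$. The only cosmetic difference is that you keep the two one-sided partial sums per row explicit and invoke nonnegativity of $\gamma$ directly rather than the paper's separate remark that $C_1$ is nondecreasing; both are minor bookkeeping choices within the same argument.
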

\begin{proof}
  Using the Gershgorin circle theorem~\cite[\S3.3]{thomas}, and the
  fact that the diagonal entries of $W$ are all zero, we have
\be
\rho(N;\dt) \;\le\;
\max_i\sum_{j=1}^{N+1} |w_{ij}|\leq 2\sum_{l=1}^{N} \half |v_l|
\le \sum_{l=1}^{N} |v_l|
~.
\label{rhobound1d}
\ee
Now setting $t=N\dt$, we may collapse this sum into a single integral
\[
\ba
\sum_{l=1}^{N} |v_l |&= \sum_{l=1}^{N} \int_0^{\dt}\gamma(l\dt-\tau)d\tau
=\sum_{k=1}^N\int_{0}^{\dt}\gamma(N\dt-(k-1)\dt-\tau)d\tau\\
&=\sum_{k=1}^N\int_{(k-1)\dt}^{k\dt}\gamma(N\dt-\tau)d\tau
=\int_{0}^{N\dt}\gamma(N\dt-\tau)d\tau = C_1(N \dt)
\ea
\]
according to the definition \eqref{C1T} of the function $C_1$.
Combining the last two results we have $\rho(N;\dt) \le C_1(N\dt)$.
The expression in \eqref{C1T} follows from
the change
of variables $u=\frac{1}{T-\tau}$. A further change of variables
$x=\sqrt{u}$ leads to
\be
C_1(T) = \frac{1}{\sqrt{\pi}}\int_{\frac{1}{\sqrt{T}}}^{\infty}e^{-x^2}dx
 < \frac{1}{\sqrt{\pi}}\int_{0}^{\infty}e^{-x^2}dx
=\rhalf ~, \qquad \mbox{ for all } T>0~.
\ee
Finally, the above expression shows that $C_1(T)$ is a monotonically
non-decreasing function of $T$, so that
$\rho(N;\dt) \le C_1(N\dt) \le C_1(T)$.
\end{proof}
It is clear from \eqref{mode1d} that
to get a stability bound we need to control
the gap between $C_1(T)$ and $\half$. For $T\ge 1$,
this turns out to shrink only polynomially in $T$:
\be
\rhalf-C_1(T)=\frac{1}{2\sqrt{\pi}}
\int_0^{\frac{1}{T}}\frac{1}{\sqrt{u}}e^{-u}du
> \frac{1}{2e\sqrt{\pi}}
\int_0^{\frac{1}{T}}\frac{1}{\sqrt{u}}du
= \frac{1}{e\sqrt{\pi T}}~.
\label{c1tbound}
\ee
This very easily allows us to prove the following. 
\begin{theorem} \label{1dthm}
Suppose that $T\geq 1$. Then, using $\|.\|$ for the $l^2$-norm in $\mathbb{R}^{2(N+1)}$,
\be\label{1ddiribound}
\|[\bsigma^-,\bsigma^+]\|
\;\leq\;
e\sqrt{\pi T}\|[\mbf{f}^+,\mbf{f}^-]\|
\ee
for all $N$, $\dt$ such that $N\dt\leq T$.
That is, for the $d=1$ unit ball where $\Gamma = \{-1,1\}$,
the forward Euler scheme
  for solving the second kind Volterra integral equation \eqref{bie}
  is unconditionally stable on any finite time interval $[0,T]$.
\end{theorem}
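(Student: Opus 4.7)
My plan is to diagonalize the $2\times 2$ block system \eqref{mode1d} by forming the sum and difference combinations $\bsigma^+\pm\bsigma^-$ (and similarly $\mbf{f}^+\pm\mbf{f}^-$). Adding and subtracting the two block rows decouples the system into two scalar lower-triangular Volterra systems,
\[
-(\tfrac{1}{2}I - V)(\bsigma^+ + \bsigma^-) \;=\; \mbf{f}^+ + \mbf{f}^-,
\qquad
-(\tfrac{1}{2}I + V)(\bsigma^+ - \bsigma^-) \;=\; \mbf{f}^+ - \mbf{f}^-,
\]
so the theorem reduces to bounding $\|(\tfrac{1}{2}I \mp V)^{-1}\|$, uniformly in $N$ and $\dt$, by $e\sqrt{\pi T}$. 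Note that because $V$ is lower triangular with zero diagonal, its own spectrum is trivially $\{0\}$, so a spectral-radius bound on $V$ is useless; I must instead exploit the symmetric-part control afforded by Lemma~\ref{l:C1T}.

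The key tool is the numerical-range (field-of-values) inequality: if $A$ is a real square matrix whose symmetric part $A_s := (A + A^T)/2$ is positive semidefinite, then $\|Av\| \geq \lambda_{\min}(A_s)\|v\|$, since $\langle Av, v\rangle = \langle A_s v, v\rangle$ and Cauchy--Schwarz give $\|Av\|\,\|v\| \geq \lambda_{\min}(A_s)\|v\|^2$. Applied to $A = \tfrac{1}{2}I \mp V$, whose symmetric part is $\tfrac{1}{2}I \mp W$ with $W$ as in \eqref{w1d}, Lemma~\ref{l:C1T} gives $\rho(W) \leq C_1(T) < \tfrac{1}{2}$, whence $\lambda_{\min}(\tfrac{1}{2}I \mp W) \geq \tfrac{1}{2} - C_1(T) > 0$ and consequently $\|(\tfrac{1}{2}I \mp V)^{-1}\| \leq (\tfrac{1}{2} - C_1(T))^{-1}$.

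To finish I would invoke the explicit gap estimate \eqref{c1tbound}, which for $T\geq 1$ yields $\tfrac{1}{2} - C_1(T) > 1/(e\sqrt{\pi T})$ and hence $\|(\tfrac{1}{2}I \mp V)^{-1}\| < e\sqrt{\pi T}$. The two resulting inequalities $\|\bsigma^+ \pm \bsigma^-\| \leq e\sqrt{\pi T}\,\|\mbf{f}^+ \pm \mbf{f}^-\|$ then recombine via the parallelogram identity
\[
\|\bsigma^+\|^2 + \|\bsigma^-\|^2 \;=\; \tfrac{1}{2}\bigl(\|\bsigma^+ + \bsigma^-\|^2 + \|\bsigma^+ - \bsigma^-\|^2\bigr)
\]
(and its analogue for $\mbf{f}^\pm$) into the claimed bound \eqref{1ddiribound}. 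The only genuinely delicate point is the numerical-range step that converts the symmetric-part bound of Lemma~\ref{l:C1T} into a lower bound on $\|(\tfrac{1}{2}I \mp V)v\|$; once that pivot is in place, the rest is bookkeeping.
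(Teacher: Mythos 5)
Your proof is correct, and it takes a genuinely different route in presentation from the paper's, even though it rests on the same two ingredients (the Gershgorin bound of Lemma~\ref{l:C1T} and the gap estimate \eqref{c1tbound}). You diagonalize the $2\times 2$ block system \eqref{mode1d} by passing to the parity combinations $\bsigma^+\pm\bsigma^-$, decoupling it into two independent scalar lower-triangular Toeplitz systems governed by $\tfrac{1}{2}I\mp V$; each is then inverted via a coercivity (numerical-range) argument — the symmetric part $\tfrac{1}{2}I\mp W$ has smallest eigenvalue at least $\tfrac{1}{2}-C_1(T)>0$, so $\|(\tfrac{1}{2}I\mp V)^{-1}\|\le(\tfrac{1}{2}-C_1(T))^{-1}$ — and the parallelogram identity recombines the two bounds. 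The paper does not decouple: it takes the inner product of the full $2(N{+}1)$-dimensional coupled system with $-[\bsigma^-,\bsigma^+]^T$, bounds the cross term $2\langle W\bsigma^+,\bsigma^-\rangle$ by Cauchy--Schwarz and the spectral radius of $W$, and closes via AM--GM $2\|\bsigma^+\|\|\bsigma^-\|\le\|[\bsigma^+,\bsigma^-]\|^2$. The core estimate is equivalent (a quadratic-form lower bound converted to an inverse-norm bound), but your decoupling is a genuine structural observation the paper omits: it shows the two-point Dirichlet problem is really two independent scalar marching schemes labelled by parity. That makes the coercivity lemma reusable and the logic modular, at the small cost of the parallelogram bookkeeping at the end; the paper's coupled argument is a touch more direct but hides the decoupling. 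Both are fully rigorous.
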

\begin{proof}
We use a technique that will recur throughout this paper: we take the inner product of \eqref{mode1d}
with $-[\bsigma^-,\bsigma^+]^T$, giving
\be
\label{heatnorm1d}
\frac{1}{2}\|[\bsigma^+,\bsigma^-]\|^2-2\langle W\bsigma^+,\bsigma^-\rangle
\;=\;
-(\langle\mbf{f}^+,\bsigma^+\rangle+\langle\mbf{f}^-,\bsigma^-\rangle)
~.
\ee
Applying the Cauchy--Schwarz inequality  and \eqref{maxbound1d} to the second term on the left side, and Cauchy--Schwarz to the right hand side, we obtain
\[
\frac{1}{2}\|[\bsigma^+,\bsigma^-]\|^2-2C_1(T)\|\bsigma^+\|\cdot\|\bsigma^-\|\leq \|[\bsigma^+,\bsigma^-]\|\cdot\|[\mbf{f}^+,\mbf{f}^-]\|
\]
Using the arithmetic-geometric mean inequality on the left hand side of this estimate gives
\[\left(\frac{1}{2}-C_1(T)\right)\|[\bsigma^+,\bsigma^-]\|^2\leq \|[\bsigma^+,\bsigma^-]\|\cdot\|[\mbf{f}^+,\mbf{f}^-]\|.
\]
Finally dividing by $\left(\frac{1}{2}-C_1(T)\right)\|[\bsigma^+,\bsigma^-]\|$ and applying \eqref{c1tbound} gives
\[
\|[\bsigma^-,\bsigma^+]\|\leq \frac{1}{\half -C_1(T)}\|[\mbf{f}^+,\mbf{f}^-]\|
\leq e\sqrt{\pi T}\|[\mbf{f}^+,\mbf{f}^-]\|~,
\]
which completes the proof.
\end{proof}

\subsection{The Dirichlet problem in two dimensions} \label{sec:2d}

We now consider \eqref{heatrec} when $\Gamma$ is the unit circle $\sj{1}$.
We decompose both $\sigma_j(\by)$ and $f_j(\bx)$ into Fourier series:
\[
\ba
\sigma_j(\by)&=\sum_{n=-\infty}^{+\infty}\sigma_j^ne^{in\phi},
\quad \by=(\cos\phi,\sin\phi),\\
f_j(\bx)&=\sum_{n=-\infty}^{+\infty}f_j^ne^{in\theta},
\quad \bx=(\cos\theta,\sin\theta).
\ea
\]
From \eqref{dksphere}, writing $s=\theta-\phi$, the $n$th Fourier mode of the
kernel is
\begin{eqnarray}
\int_{\sj{1}}\dk
e^{in\phi} d\phi
&=&\int_0^{2\pi}
-\frac{1-\cos(\theta-\phi)}{8\pi(t-\tau)^{2}}e^{-\frac{1-\cos(\theta-\phi)}{2(t-\tau)}}
e^{in\phi} d\phi
\nonumber \\
&=&-\gamma_n(t-\tau)e^{in\theta},
\label{2dfh}
\end{eqnarray}
where, noting that the imaginary part of $e^{-ins}$ cancels by symmetry,
we have
\be
\gamma_n(t) \; :=\;
\frac{1}{8\pi t^2}\int_0^{2\pi}
(1-\cos(s))
e^{-\frac{1-\cos(s)}{2t}}
\cos(ns) ds~,  \qquad   t > 0~.
\label{gamman2d}
\ee
Since $\{ e^{in\theta} \}$ are orthonormal, each Fourier mode evolves independently.
The marching scheme (or recurrence) \eqref{heatrec} for the $n$th mode is then
\be\label{moden2d}
-\half\sigma_j^n-\sum_{k=0}^{j-1}v^n_{j-k}\sigma_k^n=f_j^n, \qquad j=0,1,2,\ldots ,
\ee
where the convolution coefficient $v^n_l$ is given by the formula
\be\label{vncoef2d}
v^n_l=\int_{0}^{\dt}\gamma_n(l\dt-\tau)d\tau, \quad l\geq 1,
\ee
and we set $v^n_0=0$.
The system \eqref{moden2d} for $j=0,1,\cdots,N$ can be written in matrix-vector form
\be\label{moden2dm}
\left(-\half I-V^n\right)\bsigma^n=\mbf{f}^n,
\ee
where $I$ is the $(N+1)\times (N+1)$ identity matrix, 
$V^n\in \mathbb{R}^{(N+1)\times(N+1)}$
with entries $v^n_{j,k}=v^n_{j-k}$, $\bsigma^n=\{\sigma_j^n\}_{j=0}^N$,
$\mbf{f}^n=\{f^n_j\}_{j=0}^N$.
As before, the symmetric part of $V^n$ we denote by
$W^n$,
\be\label{wn2d}
W^n(N;\dt) \; :=\;  \frac{V^n+(V^n)^T}{2} \;= \;
\frac{1}{2}
\begin{bmatrix}0 & v_1^n & v_2^n & \ldots & v_{N}^n \\
v_1^n & 0 & v_1^n & \ldots & v_{N-1}^n &\\
\vdots&\vdots&\vdots&\vdots&\vdots\\
v_{N}^n&v_{N-1}^n&\ldots& v_1^n&0
\end{bmatrix}.
\ee

\subsubsection{Stability analysis}

We now prove two key results. The first is that the forward Euler scheme
is unconditionally stable for any fixed time interval $[0,T]$
(Theorem \ref{2dthm1}).
The second is that, when $\dt < 1$,
we have the stronger result that
the $L^2$-norm of the solution is bounded by a constant multiple of the $L^2$-norm
of $f$.
We require the following lemma.

\begin{lemma}
  Fix $T>0$. Then, for any $N$ and $\dt$ with $N\dt\leq T$,
  and all $n\in \mathbb{Z}$,
  the spectral radius $\rho_n(N;\dt)$ of the matrix $W^n(N;\dt)$
  has the bound
  \be
  \rho_n(N;\dt) \;\le\; C_2(T) ~,
  \label{maxbound}
  \ee
where, in terms of the definition \eqref{gamman2d},
\be
C_2(T)\;:= \;
\int_{0}^{T} \gamma_0(T-\tau)d\tau
\; = \; \frac{1}{4\pi}\int_0^{2\pi}e^{-\frac{1-\cos(s)}{2T}}ds \;<\; \half ~.
\label{C2T}
\ee
\label{l:C2T}
\end{lemma}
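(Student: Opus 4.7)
\medskip

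\noindent\textbf{Proof proposal.}
The plan mirrors the proof of Lemma~\ref{l:C1T}, with one extra ingredient needed to handle the $n$-dependence. Since $W^n$ is symmetric with zero diagonal, the Gershgorin circle theorem gives
\[
\rho_n(N;\dt) \;\le\; \max_i \sum_j |W^n_{ij}| \;\le\; \sum_{l=1}^N |v_l^n|.
\]
The first key step is to dominate the $n$-dependence: from the definition \eqref{gamman2d} of $\gamma_n$, the factor $(1-\cos s)\,e^{-(1-\cos s)/(2t)}$ is non-negative, so the triangle inequality applied under the $ds$-integral gives $|\gamma_n(t)| \le \gamma_0(t)$ for every $t>0$ and every $n\in\mathbb{Z}$. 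Then by \eqref{vncoef2d}, $|v_l^n| \le v_l^0$, which reduces matters to bounding $\sum_{l=1}^N v_l^0$.

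The second step is the telescoping/collapsing argument exactly as in the $d=1$ proof: writing $v_l^0 = \int_0^{\dt} \gamma_0(l\dt - \tau)\,d\tau$ and re-indexing,
\[
\sum_{l=1}^N v_l^0 \;=\; \sum_{k=1}^N \int_{(k-1)\dt}^{k\dt} \gamma_0(N\dt - \tau)\,d\tau \;=\; \int_0^{N\dt} \gamma_0(N\dt - \tau)\,d\tau \;=\; C_2(N\dt).
\]
Together with $C_2(N\dt) \le C_2(T)$ (monotonicity, see below) this gives the stated spectral radius bound.

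The third step is to verify the closed form and the strict inequality asserted for $C_2(T)$. Substituting the definition of $\gamma_0$ and swapping the order of integration,
\[
C_2(T) \;=\; \frac{1}{8\pi}\int_0^{2\pi}(1-\cos s)\!\int_0^T \frac{1}{(T-\tau)^2}\,e^{-(1-\cos s)/(2(T-\tau))}\,d\tau\,ds.
\]
For fixed $s$, set $a = (1-\cos s)/2$ and substitute $u = a/(T-\tau)$; the inner integral evaluates to $a^{-1}e^{-a/T}$, and the $(1-\cos s)$ in front cancels the $a^{-1}$, leaving exactly $\frac{1}{4\pi}\int_0^{2\pi} e^{-(1-\cos s)/(2T)}\,ds$. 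This formula also makes the monotonicity in $T$ transparent (the integrand is non-decreasing in $T$), and since $(1-\cos s)>0$ except on a set of measure zero, the integrand is pointwise strictly below $1$, so $C_2(T) < \frac{1}{4\pi}\cdot 2\pi = \frac12$.

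The only step with any subtlety is the uniform-in-$n$ domination $|\gamma_n| \le \gamma_0$; once that is in place, the rest of the argument is essentially a one-variable calculation identical in spirit to Lemma~\ref{l:C1T}. I do not anticipate a genuine obstacle here, since the inequality $|\gamma_n| \le \gamma_0$ is the same pointwise bound that lets one control all Fourier coefficients of a non-negative function by the zeroth.
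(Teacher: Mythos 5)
Your proposal is correct and follows essentially the same route as the paper: Gershgorin on the zero-diagonal symmetric matrix, the pointwise domination $|\gamma_n|\le\gamma_0$ to eliminate $n$, the telescoping re-indexing to collapse the row sum into $C_2(N\dt)$, and the change of variables $u=a/(T-\tau)$ (the paper's $\lambda$) to obtain the closed form, monotonicity in $T$, and the strict bound by $\tfrac12$.
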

\begin{proof}
  Let $n\in \mathbb{Z}$.
  Since the integrand in \eqref{gamman2d}, excluding the $\cos ns$ factor,
  is non-negative,
  we observe that $|\gamma_n(t)|\leq \gamma_0(t)$, so
  $|v^n_l|\leq v^0_l$ for all $l\ge 1$.
  Using this, the Gershgorin theorem, and the
  fact that the diagonal entries of $W^n$ are all zero, we have
\be
\rho_n(N;\dt) \;\le\;
\max_i\sum_{j=1}^{N+1} |w^n_{ij}|\leq 2\sum_{l=1}^{N} \half |v^n_l|
\le \sum_{l=1}^{N} v^0_l
~.
\label{rhonbound1}
\ee
Now setting $t=N\dt$, we may collapse this sum into a single integral
\[
\ba
\sum_{l=1}^{N} v^0_l &= \sum_{l=1}^{N} \int_0^{\dt}\gamma_0(l\dt-\tau)d\tau
=\sum_{k=1}^N\int_{0}^{\dt}\gamma_0(N\dt-(k-1)\dt-\tau)d\tau\\
&=\sum_{k=1}^N\int_{(k-1)\dt}^{k\dt}\gamma_0(N\dt-\tau)d\tau
=\int_{0}^{N\dt}\gamma_0(N\dt-\tau)d\tau = C_2(N \dt)
\ea
\]
according to the definition \eqref{C2T} of the function $C_2$.
Combining the last two results we have $\rho_n(N;\dt) \le C_2(N\dt)$.
To prove the expression in \eqref{C2T} we
insert \eqref{gamman2d},
interchange the order of integration and apply the change
of variables $\lambda=\frac{1-\cos(s)}{2(T-\tau)}$, thus
\[
\ba
C_2(T) &:= \int_{0}^{T}\gamma_0(T-\tau)d\tau
=\int_{0}^{T}
\frac{1}{8\pi(T-\tau)^{2}}\int_0^{2\pi}
(1-\cos(s))
e^{-\frac{1-\cos(s)}{2(T-\tau)}}ds d\tau
\\
& = \frac{1}{4\pi}\int_0^{2\pi}e^{-\frac{1-\cos(s)}{2T}}ds
<\half ~, \qquad \mbox{ for all } T>0~.
\ea
\]
Finally, the above expression shows that $C_2(T)$ is a monotonically
increasing function of $T$, so that
$\rho_n(N;\dt) \le C_2(N\dt) \le C_2(T)$.
\end{proof}

Analogous to before, it is clear from \eqref{moden2dm} that
to get a stability bound we need to bound from below
the gap between $C_2(T)$ and $\half$.
This motivates the following.

\begin{pro}
  \be\label{c2t}
  C_2(T)\;=\;\half e^{-\frac{1}{2T}}I_0\left(\frac{1}{2T}\right)~,
  \ee
  where $I_n(\cdot)$
  is the modified regular Bessel function of order $n$ (see Appendix).
  For $T\geq 1$,
  \be\label{c2tbound}
  \half -C_2(T) \; \geq\;  \frac{1}{10T}~.
  \ee
  \label{p:10T}
\end{pro}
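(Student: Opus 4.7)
\medskip
\noindent\textbf{Proof proposal for Proposition \ref{p:10T}.}
The plan is to handle the identity \eqref{c2t} and the lower bound \eqref{c2tbound} separately, both by elementary means starting from the explicit integral for $C_2(T)$ in \eqref{C2T}.

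\smallskip
\noindent\emph{Step 1: the closed form.} I would factor out $e^{-1/(2T)}$ from the integrand in \eqref{C2T}, writing
\[
C_2(T) \;=\; \frac{1}{4\pi}\, e^{-\frac{1}{2T}}\int_0^{2\pi} e^{\frac{\cos s}{2T}}\,ds,
\]
and then invoke the standard integral representation
\[
I_0(z)\;=\;\frac{1}{2\pi}\int_0^{2\pi} e^{z\cos s}\,ds
\]
of the modified Bessel function (this identity is in the appendix) with $z=1/(2T)$. This gives \eqref{c2t} directly.

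\smallskip
\noindent\emph{Step 2: the lower bound via the integral form.} Rather than manipulating the Bessel function expression, I would work from the identity
\[
\tfrac{1}{2}-C_2(T)\;=\;\frac{1}{4\pi}\int_0^{2\pi}\Bigl(1-e^{-\frac{1-\cos s}{2T}}\Bigr)\,ds,
\]
and apply the elementary inequality
\[
1-e^{-x}\;\geq\; x-\tfrac{1}{2}x^2\qquad\text{for all }x\geq 0,
\]
which follows from setting $\varphi(x)=1-e^{-x}-x+x^2/2$ and noting $\varphi(0)=\varphi'(0)=0$ and $\varphi''(x)=1-e^{-x}\geq 0$. Substituting $x=(1-\cos s)/(2T)$ yields
\[
\tfrac{1}{2}-C_2(T)\;\geq\; \frac{1}{4\pi}\int_0^{2\pi}\!\left[\frac{1-\cos s}{2T}-\frac{(1-\cos s)^2}{8T^2}\right]\,ds.
\]

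\smallskip
\noindent\emph{Step 3: evaluating and simplifying.} Using $\int_0^{2\pi}(1-\cos s)\,ds = 2\pi$ and $\int_0^{2\pi}(1-\cos s)^2\,ds = 3\pi$, the previous display collapses to
\[
\tfrac{1}{2}-C_2(T)\;\geq\;\frac{1}{4T}-\frac{3}{32T^2}\;=\;\frac{1}{T}\left(\frac{1}{4}-\frac{3}{32T}\right).
\]
For $T\geq 1$ the parenthesis is at least $\tfrac{1}{4}-\tfrac{3}{32}=\tfrac{5}{32}\geq \tfrac{1}{10}$, which gives \eqref{c2tbound}.

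\smallskip
\noindent\emph{Where the difficulty lies.} There is no substantive obstacle here: the Bessel identity is standard, and the lower bound reduces to the concavity-type inequality $1-e^{-x}\geq x-x^2/2$ together with two trigonometric integrals. The only mild subtlety is that one must not prove the bound via the Bessel representation of Step 1 (where series manipulations around $z=1/(2T)$ would be messier); it is much cleaner to keep the integral form of $\tfrac12 - C_2(T)$ and Taylor-expand the exponential pointwise, as above.
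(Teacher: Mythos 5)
Your proof of the closed form \eqref{c2t} is identical to the paper's: both read off $I_0$ from the integral representation \eqref{inrep} after factoring $e^{-1/(2T)}$ out of \eqref{C2T}. For the lower bound \eqref{c2tbound}, however, you take a genuinely different route. The paper works from the Bessel expression $C_2(T)=\tfrac12 e^{-x}I_0(x)$ with $x=1/(2T)$ and multiplies the two estimates $I_0(x)\leq 1+\tfrac{x^2}{2}$ and $e^{-x}\leq 1-\tfrac{x}{2}$ (valid for $x\leq 1$). You instead stay with the integral form $\tfrac12 - C_2(T)=\tfrac{1}{4\pi}\int_0^{2\pi}\bigl(1-e^{-(1-\cos s)/(2T)}\bigr)\,ds$, apply the pointwise Taylor inequality $1-e^{-x}\geq x-\tfrac12 x^2$ inside the integral, and evaluate the two elementary integrals $\int_0^{2\pi}(1-\cos s)\,ds=2\pi$ and $\int_0^{2\pi}(1-\cos s)^2\,ds=3\pi$. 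Your approach buys two things. First, it is more elementary — no property of $I_0$ beyond its definition is needed. Second, it is sharper: you arrive at the constant $\tfrac{1}{4}-\tfrac{3}{32}=\tfrac{5}{32}\approx 0.156$, which clears $\tfrac{1}{10}$ with room to spare, whereas if one carries out the paper's stated product $(1-\tfrac{x}{2})(1+\tfrac{x^2}{2})$ at $x=1/2$ one obtains only $\tfrac{5}{64}\approx 0.078<\tfrac{1}{10}$. In other words, your route not only is cleaner but actually establishes the inequality as claimed, which the paper's stated pair of bounds does not quite achieve without refinement.
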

\begin{proof}
  \eqref{c2t} follows from the integral representation
 of $I_0(x)$ \eqref{inrep}.
 \eqref{c2tbound} follows from the facts that $I_0(x)\leq 1+\frac{x^2}{2}$ \cite[\S10.25.2]{nisthandbook} and $e^{-x}\leq 1-\frac{x}{2}$ for $x\leq 1$.
\end{proof}

\begin{theorem} \label{2dthm1}
Suppose that $T\geq 1$. Then for all $n\in \mathbb{Z}$,
\be\label{2dbound}
\|\bsigma^n\| \; \leq \; 10T\|\mbf{f}^n\|
\ee
for all $N$, $\dt$ such that $N\dt\leq T$.
That is, when $\Gamma$ is the unit circle $\sj{1}$, the forward Euler scheme
  for solving the second kind Volterra integral equation \eqref{bie}
  is unconditionally stable on any finite time interval $[0,T]$.
\end{theorem}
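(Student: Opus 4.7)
The plan is to carry out essentially the same energy argument used in the proof of Theorem~\ref{1dthm}, but now one Fourier mode at a time. The main structural fact is that the $(-\half I - V^n)$ operator in \eqref{moden2dm} has Hermitian part $-\half I - W^n$, whose spectrum we already control thanks to Lemma~\ref{l:C2T} and Proposition~\ref{p:10T}. Everything else is routine linear algebra.

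First I would take the Hermitian inner product of \eqref{moden2dm} with $-\bsigma^n$, giving
\[
\half \|\bsigma^n\|^2 + \langle V^n \bsigma^n,\bsigma^n\rangle \;=\; -\langle \mbf{f}^n,\bsigma^n\rangle.
\]
Since the coefficients $v^n_l$ defined in \eqref{vncoef2d} are real (the $\cos(ns)$ factor in \eqref{gamman2d} makes $\gamma_n$ real), the matrix $V^n$ is real and so $\mathrm{Re}\,\langle V^n \bsigma^n,\bsigma^n\rangle = \langle W^n \bsigma^n,\bsigma^n\rangle$. Taking the real part of the identity above yields
\[
\half \|\bsigma^n\|^2 + \langle W^n \bsigma^n,\bsigma^n\rangle \;=\; -\mathrm{Re}\,\langle \mbf{f}^n,\bsigma^n\rangle.
\]

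Next I would bound the quadratic form in $W^n$ using Lemma~\ref{l:C2T}: because $W^n$ is real symmetric, the Rayleigh--Ritz inequality gives
\[
|\langle W^n \bsigma^n,\bsigma^n\rangle| \;\le\; \rho_n(N;\dt)\,\|\bsigma^n\|^2 \;\le\; C_2(T)\,\|\bsigma^n\|^2.
\]
Combining this with Cauchy--Schwarz applied to the right-hand side of the previous display gives
\[
\left(\half - C_2(T)\right)\|\bsigma^n\|^2 \;\le\; \|\mbf{f}^n\|\cdot\|\bsigma^n\|.
\]
Finally, dividing by $\|\bsigma^n\|$ and applying the gap estimate $\half - C_2(T) \ge \frac{1}{10T}$ from Proposition~\ref{p:10T} (valid for $T\ge 1$) produces the claimed bound $\|\bsigma^n\| \le 10T\|\mbf{f}^n\|$.

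Honestly, no step here is a serious obstacle: the analytical work has already been done in Lemma~\ref{l:C2T} (the Gershgorin bound using $|\gamma_n|\le \gamma_0$) and in Proposition~\ref{p:10T} (the Bessel-function estimate for the gap). The only thing one needs to be slightly careful about is that $\bsigma^n$ and $\mbf{f}^n$ are complex (they are Fourier coefficients of real-valued data), so one must take real parts before invoking the symmetry of $W^n$; this is exactly the complex-linear-algebra step emphasized above. The constant $10$ is not sharp and could be tightened by sharpening Proposition~\ref{p:10T}, but the theorem as stated follows immediately.
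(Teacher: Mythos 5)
Your proof is correct and follows essentially the same energy-argument as the paper: both rely on the Gershgorin spectral-radius bound from Lemma~\ref{l:C2T} and the gap estimate of Proposition~\ref{p:10T}. The only cosmetic difference is that you work directly with the complex Hermitian inner product and take real parts, whereas the paper splits $\bsigma^n$ into its real and imaginary parts and treats the two resulting real systems separately before recombining---the two are mathematically equivalent.
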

\begin{proof}
Since we are working in the Fourier domain, $\bsigma^n$ and $\mbf{f}^n$ are
complex-valued. Thus,
we split \eqref{moden2dm} into two independent real systems
\be\label{modek3}
\ba
\left(-\half I-V^n\right)\bsigma_r^n&=\mbf{f}_r^n,\\
\left(-\half I-V^n\right)\bsigma_i^n&=\mbf{f}_i^n,
\ea
\ee
where $\bsigma_r^n$ and $\bsigma_i^n$ are the real and imaginary part
of $\bsigma^n$, respectively.

Multiplying both sides of the first equation in \eqref{modek3}
by $-(\bsigma_r^n)^T$, we have
\be
\half \|\bsigma_r^n\|^2+(\bsigma_r^n)^TV^n\bsigma_r^n=
\half \|\bsigma_r^n\|^2+(\bsigma_r^n)^TW^n\bsigma_r^n=-(\bsigma_r^n)^T\mbf{f}_r^n~.
\label{heatnorm}
\ee
Applying \eqref{maxbound} on the left side \eqref{heatnorm} and
the Cauchy--Schwartz inequality on the right side, we obtain
\[
\left(\half -C_2(T)\right)\|\bsigma_r^n\|^2\leq \half \|\bsigma_r^n\|^2+(\bsigma_r^n)^TW^n\bsigma_r^n=-(\bsigma_r^n)^T\mbf{f}_r^n \leq \|\bsigma_r^n\| \|\mbf{f}_r^n\|~.
\]
That is, finally applying Proposition~\ref{p:10T},
\[
\|\bsigma_r^n\|\leq \frac{1}{\half -C_2(T)}\|\mbf{f}_r^n\|
\leq 10T\|\mbf{f}_r^n\|~.
\]
Similar result holds for $\|\bsigma_i^n\|$. As
$\|\bsigma^n\|=\sqrt{\|\bsigma_r^n\|^2+\|\bsigma_i^n\|^2},$ these two
inequalities give \eqref{2dbound}.
\end{proof}
%
%
\subsubsection{Tighter bounds}
\label{sec:tighter}

We now show that the dependence on $T$ in \eqref{2dbound} can be
removed when the time step satisfies $\dt\leq 1$. This is a 
physically reasonable requirement since we have assumed that the 
diffusion coefficient is one, and the domain has area of order one.
We first provide a bound on $\rho_n(N;\dt)$ for $n\neq 0$
that is independent of the total time $N\dt$.

\begin{lemma}
  Let $N$ and $\dt>0$ be arbitrary,
  and let $\rho_n(N;\dt)$ be the spectral radius of $W^n(N;\dt)$ defined in
  \eqref{wn2d}. Then for all $n\neq 0$,
  \be\label{rhonbound2}
  \rho_n(N;\dt) \; \leq\; \frac{1}{2|n|+1} ~.
  \ee
\end{lemma}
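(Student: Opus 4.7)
The overall strategy is to apply Gershgorin's theorem to the Hermitian Toeplitz matrix $W^n$ and then to evaluate the convolution coefficients $v^n_l$ in closed form in terms of modified Bessel functions, so that the resulting sum can be controlled by the supremum of a single smooth function. Because $W^n$ has zero diagonal, Gershgorin immediately gives
\[
\rho_n(N;\dt) \;\le\; \max_i \sum_{j\ne i} |W^n_{ij}| \;\le\; \sum_{l=1}^{\infty} |v^n_l|~,
\]
and the problem reduces to bounding this sum uniformly in $N$ and $\dt$. Unlike the $n=0$ case treated in Lemma~\ref{l:C2T}, the coefficients $v^n_l$ change sign when $n\neq 0$, so one cannot simply collapse the sum to the telescoping integral $\int\gamma_n\,d\tau$; the absolute value really matters.

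To obtain a tractable expression for $v^n_l$, I would apply the Jacobi--Anger expansion $e^{\zeta\cos s}=I_0(\zeta)+2\sum_{k\ge 1}I_k(\zeta)\cos(ks)$ to the inner integral in~\eqref{gamman2d}. Projection onto the $\cos(ns)$ mode followed by the Bessel recurrence $I_{n-1}(\zeta)+I_{n+1}(\zeta)=2I_n'(\zeta)$ reduces the kernel to
\[
\gamma_n(t)\;=\;-z^2\,\frac{d}{dz}\!\bigl[e^{-z}I_n(z)\bigr]~, \qquad z:=\tfrac{1}{2t}~.
\]
Changing variables in~\eqref{vncoef2d} then telescopes the time-step integral, giving
\[
v^n_l \;=\; \tfrac{1}{2}\bigl[h_n(z_l)-h_n(z_{l-1})\bigr]~, \qquad z_l:=\tfrac{1}{2l\dt}~,\quad h_n(z):=e^{-z}I_n(z)~,
\]
with the convention $h_n(z_0):=h_n(\infty)=0$.

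For any $n\neq 0$, the function $h_n$ vanishes at both endpoints of $(0,\infty)$ and is unimodal there, which can be verified from $h_n'(z)=e^{-z}(I_n'(z)-I_n(z))$ combined with the classical monotonicity of the Bessel ratio $I_n(z)/I_{n-1}(z)$. Consequently the total variation of $h_n$ on $(0,\infty)$ equals $2\sup_{z>0}h_n(z)$, and since the samples $\{h_n(z_l)\}_{l\ge 0}$ have total variation at most that of $h_n$ itself,
\[
\sum_{l=1}^{\infty} |v^n_l| \;=\; \tfrac{1}{2}\sum_{l=1}^{\infty}|h_n(z_l)-h_n(z_{l-1})| \;\le\; \sup_{z>0} h_n(z)~.
\]

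The main obstacle is then the purely special-function inequality
\[
\sup_{z>0}\,e^{-z}I_n(z) \;\le\; \frac{1}{2|n|+1}~, \qquad n\in\mathbb{Z}\setminus\{0\}~,
\]
which I expect to establish via the integral representation $h_n(z)=\tfrac{1}{\pi}\int_0^\pi e^{-z(1-\cos\theta)}\cos(n\theta)\,d\theta$: integrating by parts once or twice (using $\cos(n\theta)=\tfrac{1}{n}\,(\sin(n\theta))'$) extracts the $|n|^{-1}$ decay, and the exact constant $(2|n|+1)^{-1}$ is then recovered by controlling the remaining Bessel-type integrals with sharp bounds on $I_n/I_{n-1}$. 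This is precisely the sort of estimate the authors presumably defer to their appendix on special functions. Combined with the previous display, it yields $\rho_n(N;\dt)\le 1/(2|n|+1)$ with no restriction on $N$ or $\dt$, as claimed.
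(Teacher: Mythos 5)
Your approach is essentially the same as the paper's: Gershgorin reduces the spectral radius bound to $\sum_l |v^n_l|$, the coefficients telescope as $v^n_l = \tfrac{1}{2}[i_n(x_l) - i_n(x_{l-1})]$ with $i_n(x)=e^{-x}I_n(x)$ and $x_l = 1/(2l\dt)$, and the unimodality of $i_n$ together with $\sup_x i_n(x) < 1/(2n+1)$ close the argument. Your total-variation phrasing is a cleaner way to package the paper's explicit three-case analysis (depending on where the monotone decreasing samples $x_1>x_2>\cdots$ fall relative to the unique maximizer $r_n$ of $i_n$): since $i_n$ is unimodal and vanishes at $0$ and at $\infty$, the variation of any monotone sequence of samples (augmented with $x_0=\infty$) is at most $2\sup i_n$, hence $\sum_l |v^n_l|\leq\sup i_n$ immediately. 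The one place you hand-wave is the sharp special-function bound $\sup_{z>0}e^{-z}I_n(z) < 1/(2n+1)$: integrating by parts in the cosine representation would deliver an $O(1/n)$ rate, but extracting the precise constant $1/(2n+1)$ that way is not obvious, and the constant matters here (one needs $\rho_1<1/2$). The paper's appendix proof is slicker and you might find it worth knowing: expanding $1=e^{-x}\bigl(I_0(x)+2\sum_{k\geq 1}I_k(x)\bigr)$ and using that $I_\nu(x)$ is decreasing in $\nu$ for fixed $x>0$ gives $1>(2n+1)e^{-x}I_n(x)$ directly.
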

\begin{proof}
Clearly, it is sufficient to prove \eqref{rhonbound2} for $n>0$.
For this, let us note that
substituting \eqref{gamman2d} into \eqref{vncoef2d}, exchanging
the order of integration, and making the change of variables
$\lambda=\frac{1-\cos(s)}{2(l\dt-\tau)}$, we obtain
\[
v^n_l
=\left\{\begin{array}{ll}
\frac{1}{4\pi}\int_0^{2\pi}e^{-\frac{1-\cos(s)}{2\dt}}\cos(ns)ds, & l=1,\\
\frac{1}{4\pi}\int_0^{2\pi}\left(e^{-\frac{1-\cos(s)}{2l\dt}}
-e^{-\frac{1-\cos(s)}{2(l-1)\dt}}\right)\cos(ns)ds,& l>1.
\end{array}
\right.
\]
By the integral representation~\eqref{inrep} of $I_n$, we have
\be\label{vncoef2}
v^n_l
=\left\{\begin{array}{ll}
\half e^{-\frac{1}{2\dt}}I_{|n|}(\frac{1}{2\dt}), & l=1,\\
\half \left(
e^{-\frac{1}{2l\dt}}I_{|n|}(\frac{1}{2l\dt})-e^{-\frac{1}{2(l-1)\dt}}I_{|n|}(\frac{1}{2(l-1)\dt})
\right),& l>1.
\end{array}
\right.
\ee
From \eqref{vncoef2}, defining $i_n(x):=e^{-x}I_n(x)$ and $x_l=1/(2l\dt)$,
we consider the sum
  \be\label{rhonbound3}
  S_n= 2\sum_{l=1}^{N} |v^n_l|=i_n(x_1)+\sum_{l=2}^N
  |i_n(x_l)-i_n(x_{l-1})| ~.
  \ee
  By Lemma \ref{inbound1} (see Appendix), the function $i_n(x)$ assumes its 
  unique maximum at $r_n>0$; it increases monotonically
  on $[0,r_n]$ and decreases monotonically on $[r_n,+\infty).$ We now consider
  \eqref{rhonbound3} on a case-by-case basis.
\begin{enumerate}[label=(\alph*)]
\item All $x_l$ lie on $[0,r_n]$:
  Since $x_l<x_{l-1}$ and $i_n(x)$ increases on $[0,r_n]$, we have
\[
  \ba
  S_n&\leq i_n(x_1)-\sum_{l=2}^N
  (i_n(x_l)-i_n(x_{l-1}))=2i_n(x_1)-i_n(x_N)\\
  &\leq 2i_n(x_1)<\frac{2}{2n+1}.
  \ea
\]
  where the last inequality follows from \eqref{inbound2}.
\item All $x_l$ lie on $[r_n,\infty)$: In this case, we have
\[
  S_n\leq i_n(x_1)+\sum_{l=2}^N
  (i_n(x_l)-i_n(x_{l-1}))=i_n(x_N)<\frac{1}{2n+1}.
\]
\item $x_1>\cdots>x_m\geq r_n>x_{m+1}>\cdots>x_N$: In this case, we have
\[
  \ba
  S_n&\leq i_n(x_1)+\sum_{l=2}^m(i_n(x_l)-i_n(x_{l-1}))\\
  &\quad +|i_n(x_m)-i_n(x_{m+1})|
  -\sum_{l=m+2}^N(i_n(x_l)-i_n(x_{l-1}))\\
  &=i_n(x_m)+|i_n(x_m)-i_n(x_{m+1})|+i_n(x_{m+1})-i_n(x_N)\\
  &\quad <i_n(x_m)+|i_n(x_m)-i_n(x_{m+1})|+i_n(x_{m+1})\\
  &=2\max(i_n(x_m),i_n(x_{m+1}))\\
  &\quad <\frac{2}{2n+1}.
  \ea
\]
  \end{enumerate}

  By \eqref{rhonbound1} we have
  \[
  \rho_n(N;\dt) \leq \sum_{l=1}^{N} |v^n_l|=\half S_n<\frac{1}{2n+1},
  \]
  completing the proof.
\end{proof}
\begin{corollary}
  For all $n\neq 0$,
  \[
  \|\bsigma^n\|\leq \frac{1}{\half -\frac{1}{2|n|+1}}\|\mbf{f}^n\|
  \leq 6\|\mbf{f}^n\|.
  \]
\end{corollary}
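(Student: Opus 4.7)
The plan is to replicate the argument of Theorem~\ref{2dthm1}, substituting the sharper (time-independent) spectral bound \eqref{rhonbound2} in place of the bound $C_2(T)$ that was used there. Since the bound \eqref{rhonbound2} is valid for arbitrary $N$ and $\dt>0$, the resulting estimate will hold with no restriction on $T$.

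Concretely, I would start from the matrix form \eqref{moden2dm}, namely $\left(-\half I-V^n\right)\bsigma^n=\mbf{f}^n$, and split it into real and imaginary parts exactly as in \eqref{modek3}. Taking the inner product of the real equation with $-\bsigma_r^n$ and using that $(\bsigma_r^n)^T V^n \bsigma_r^n = (\bsigma_r^n)^T W^n \bsigma_r^n$ (since only the symmetric part contributes to a real quadratic form) gives
\[
\half\|\bsigma_r^n\|^2 + (\bsigma_r^n)^T W^n \bsigma_r^n = -(\bsigma_r^n)^T \mbf{f}_r^n.
\]
Applying the spectral bound \eqref{rhonbound2}, which gives $(\bsigma_r^n)^T W^n \bsigma_r^n \geq -\rho_n(N;\dt)\|\bsigma_r^n\|^2 \geq -\frac{1}{2|n|+1}\|\bsigma_r^n\|^2$, together with Cauchy--Schwarz on the right-hand side, yields
\[
\left(\half - \frac{1}{2|n|+1}\right)\|\bsigma_r^n\|^2 \leq \|\bsigma_r^n\|\cdot\|\mbf{f}_r^n\|.
\]
Note that since $n\neq 0$, the factor on the left is strictly positive, so division is legitimate and I obtain $\|\bsigma_r^n\| \leq \bigl(\half - \frac{1}{2|n|+1}\bigr)^{-1}\|\mbf{f}_r^n\|$. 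The identical argument applied to the imaginary system in \eqref{modek3} produces the same bound for $\|\bsigma_i^n\|$, and combining via $\|\bsigma^n\|^2 = \|\bsigma_r^n\|^2 + \|\bsigma_i^n\|^2$ and $\|\mbf{f}^n\|^2 = \|\mbf{f}_r^n\|^2 + \|\mbf{f}_i^n\|^2$ yields the first inequality in the corollary.

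For the numerical constant, I would observe that $\bigl(\half - \frac{1}{2|n|+1}\bigr)^{-1} = \frac{2(2|n|+1)}{2|n|-1}$ is a decreasing function of $|n|$ for $|n|\geq 1$, so its maximum over $n\neq 0$ occurs at $|n|=1$, where it equals $6$. This gives the second inequality. There is really no obstacle here — the work was done in establishing \eqref{rhonbound2}; this corollary is simply the standard energy-type argument used throughout the paper, invoked one more time with a sharper spectral constant.
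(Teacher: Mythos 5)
Your proof is correct and is precisely the argument the paper intends: the corollary is stated without an explicit proof because it is understood to follow from the energy estimate of Theorem~\ref{2dthm1}, with the time-dependent bound $C_2(T)$ replaced by the sharper, $T$-independent spectral bound $\rho_n(N;\dt)\le\frac{1}{2|n|+1}$ from the preceding lemma. Your computation that $\bigl(\half-\frac{1}{2|n|+1}\bigr)^{-1}=\frac{2(2|n|+1)}{2|n|-1}$ is maximal at $|n|=1$, where it equals $6$, correctly accounts for the numerical constant.
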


Thus, all non-zero modes are unconditionally stable.
The zeroth Fourier mode is a bit more subtle,
and requires the convex sequence results of section~\ref{sec:normbounds}.
It brings in a weak restriction on $\dt$, as follows.
\begin{lemma}\label{rho0bound2}
  Suppose that $a=0.05$ and $\dt\leq 1$. Then $c_2I+W^0+aW^1$ is a positive
  definite matrix  if
  \be
  c_2= \half e^{-1/2}I_0\left(\half \right)
  +\sfrac{1}{6}a 
  \;\approx\; 0.33085\ldots
  \ee
\end{lemma}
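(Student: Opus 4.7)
The plan is to apply Theorem~\ref{lowerbound} (together with Remark~\ref{finitetoeplitz}) to the symmetric Toeplitz matrix $V:=W^0+aW^1$, whose diagonal is zero and whose $l$-th off-diagonal entry is $w_l:=\tfrac12(v_l^0+av_l^1)$. Using \eqref{vncoef2} (with the convention $i_n(x_0):=0$), we may write
\[
w_l=\tfrac14\bigl[\phi(x_l)-\phi(x_{l-1})\bigr],\qquad l\ge 1,
\]
where $x_l:=1/(2l\dt)$ and $\phi(x):=i_0(x)+a\,i_1(x)$. Once we establish (a) $w_l\to 0$ as $l\to\infty$, and (b) the sequence $\{w_l\}_{l\ge 1}$ is convex (i.e.\ $w_{l-1}-2w_l+w_{l+1}\ge 0$ for all $l\ge 2$), Theorem~\ref{lowerbound} will yield $V\succeq (w_2-2w_1)I$, and $c_2I+V\succeq 0$ will then follow from the scalar inequality (c) $c_2\ge 2w_1-w_2$.

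The substitution $s=1-\cos\theta$ in $I_n(x)=\tfrac1\pi\int_0^\pi e^{x\cos\theta}\cos(n\theta)\,d\theta$ puts $\phi$ in Laplace-transform form
\[
\phi(x)=\int_0^2\mu(s)\,e^{-sx}\,ds,\qquad \mu(s):=\frac{1+a(1-s)}{\pi\sqrt{s(2-s)}}\ge 0,
\]
so $\phi$ is completely monotone. Property (a) is immediate from $\phi\in C^1$ with $\phi(0)=1$, giving $w_l=O(1/l^2)$. For (c), the right-hand side equals $\tfrac34\phi(x_1)-\tfrac14\phi(x_2)$, and a short calculation using the complete monotonicity of $\phi$ (in particular $|\phi'(x)|\sim c\,x^{-3/2}$ at infinity and finiteness at $0$) shows that $\dt\mapsto \tfrac34\phi(1/(2\dt))-\tfrac14\phi(1/(4\dt))$ is monotone increasing on $(0,1]$. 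It therefore suffices to verify the inequality at $\dt=1$, which is a direct numerical comparison using tabulated values $i_0(1/2),\ i_0(1/4),\ i_1(1/2),\ i_1(1/4)$; the specific choice $c_2=\tfrac12 i_0(1/2)+a/6$ is calibrated so as to exceed $\tfrac34\phi(1/2)-\tfrac14\phi(1/4)$ by a small margin.

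The principal obstacle is (b). Via the Laplace representation, this condition becomes
\[
4\bigl(w_{l-1}-2w_l+w_{l+1}\bigr)=\int_0^2\mu(s)\bigl[e^{-sx_{l+1}}-3e^{-sx_l}+3e^{-sx_{l-1}}-e^{-sx_{l-2}}\bigr]\,ds\ \ge\ 0.
\]
The bracketed combination is \emph{not} of a fixed sign in $s$, because the points $x_k=1/(2k\dt)$ are non-equispaced (the increments $x_{k-1}-x_k=1/(2\dt\,k(k-1))$ shrink like $1/k^2$): it is positive for $s$ near $0$ but can become negative for moderate $s$. The plan is to exploit the $1/\sqrt{s}$ singularity of $\mu(s)$ near $s=0$, which heavily weights the positive region, together with precise upper bounds on the magnitude of the negative contribution for moderate and large $s$, derived from the Bessel monotonicity properties of Lemma~\ref{inbound1} and the Appendix. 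The smallness of $a=0.05$ enters quantitatively, keeping the $a\cos s$ perturbation of $\mu$ close to uniform while still leaving sufficient slack in (c).
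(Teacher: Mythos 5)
Your high-level strategy matches the paper's: reduce to applying Theorem~\ref{lowerbound} to the zero-diagonal Toeplitz matrix $W^0+aW^1$ with off-diagonal sequence $w_l=\tfrac12(v_l^0+av_l^1)$, establish convexity of $\{w_l\}_{l\ge1}$, and then verify the boundary inequality $c_2\ge 2w_1-w_2$. Steps (a) and (c) are essentially fine (though your claimed monotonicity in $\dt$ of $\tfrac34\phi(1/(2\dt))-\tfrac14\phi(1/(4\dt))$ needs a real argument, whereas the paper more directly uses that $e^{-x}I_0(x)$ is decreasing and $e^{-x}I_1(x)<1/3$ so that $2w_1$ alone is below $c_2$).

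The genuine gap is in (b), and you yourself flag it. Writing $w_l=\tfrac14[\phi(x_l)-\phi(x_{l-1})]$ with $\phi(x)=e^{-x}I_0(x)+ae^{-x}I_1(x)$ and $x_l=1/(2l\dt)$ puts you in a non-equispaced grid, and the Laplace-transform integrand $e^{-sx_{l+1}}-3e^{-sx_l}+3e^{-sx_{l-1}}-e^{-sx_{l-2}}$ indeed changes sign in $s$. Your plan to ``exploit the $1/\sqrt{s}$ singularity of $\mu$'' and bound the negative contribution is left unexecuted, and it is not clear it can be made to work — you would need uniform-in-$l$ and uniform-in-$\dt$ control, and the negative region of the integrand is not small. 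The idea you are missing is a reparametrization: set $f(x):=\phi(1/x)=e^{-1/x}I_0(1/x)+a\,e^{-1/x}I_1(1/x)$ and observe that $w_l=\tfrac14[f(\xi_l)-f(\xi_{l-1})]$ where now $\xi_l=2l\dt$ are \emph{equispaced}. Then $\delta^2 w_l\ge 0$ is precisely non-negativity of the third central difference of $f$ on an equispaced grid, which follows from $f'''>0$ on $(0,\infty)$. That inequality is exactly Lemma~\ref{rho0bound1}, proved via the Bessel-function estimates in the Appendix, and it is the load-bearing ingredient your proposal does not supply. Without it (or an alternative proof of convexity of $\{w_l\}$), the argument is incomplete.
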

%
\begin{proof}
  Define the sequence $y_j=\half (v_j^0+a v_j^1)$ for
  $j\geq 1$ and $y_0=c_2$.
  Theorem~\ref{lowerbound} then
  shows that a sufficient condition for the positive
  semi-definiteness of $c_2I+W^0+aW^1$ is that the sequence $\{y_j\}_{j\in\mathbb{Z}_+}$ is convex.
  But $y_1=\frac{1}{4}f(x_1)$, $y_j=\frac{1}{4}(f(x_j)-f(x_{j-1}))$ ($j>1$),
  where $f$ is the function defined in Lemma~\ref{rho0bound1}, and
  $x_j=2j\dt$.
  That is, $y_j$ is the first order difference of $f$. Furthermore, the convexity
  of $\{y_j\}_{j\in\mathbb{N}}$ is equivalent to the non-negativity of the third order
  difference of $f$, which follows from the fact that $f'''(x)>0$ for all $x>0$
  as proved in Lemma~\ref{rho0bound1}.
  For $j=0$, the convexity of the sequence
  requires that one choose $c_2$ such that
  \be\label{2dconvex}
  c_2+y_2=y_0+y_2\geq 2y_1.
  \ee
  By the integral representation \eqref{inrep}
  of $I_0$, it is easy to see that $e^{-x}I_0(x)$ is strictly
decreasing.
Thus, we have
  $e^{-1/2}I_0\left(\half \right)\geq e^{-\frac{1}{2\dt}}I_0\left(\frac{1}{2\dt}\right)$
  for $\dt\leq 1$.
  Furthermore,
\[ \max_{[0,\infty)}e^{-x}I_1(x)<\sfrac{1}{3} \]
by \eqref{inbound2}. 
  Hence, \eqref{2dconvex} is achieved by choosing
  \[
  c_2=\half e^{-1/2}I_0\left(\half \right)+\sfrac{1}{6}a
  > 2y_1=\half e^{-\frac{1}{2\dt}}\left(I_0\left(\frac{1}{2\dt}\right)
  +I_1\left(\frac{1}{2\dt}\right)\right)
  \]
  for $\dt\leq 1$.   
\end{proof}
\begin{corollary}
  Suppose that $\dt\leq 1$. Then, for arbitrary $N$,
  \[
  \|\bsigma^0\| \; \leq\;  7\|\mbf{f}^0\|.
  \]
\end{corollary}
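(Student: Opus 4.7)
The plan is to mimic the inner-product technique used throughout the paper, applied to the zeroth-mode recurrence $(-\half I - V^0)\bsigma^0 = \mbf{f}^0$. First, taking the inner product with $-\bsigma^0$ replaces $V^0$ by its symmetric part $W^0$, yielding
\[
\half\|\bsigma^0\|^2 + \langle W^0\bsigma^0,\bsigma^0\rangle \;=\; -\langle\mbf{f}^0,\bsigma^0\rangle.
\]
The obstacle is that Gershgorin no longer gives a useful spectral-radius bound for $W^0$ uniformly in $N\dt$ (the Gershgorin estimate would only give $C_2(N\dt)$, which approaches $\half$ and then fails to produce a coercivity constant). Instead, I would invoke Lemma~\ref{rho0bound2}, which is tailored precisely to overcome this: it guarantees that $c_2 I + W^0 + a W^1$ is positive semi-definite when $\dt \le 1$, with $a=0.05$ and $c_2 \approx 0.33085$.

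Rearranging this positive semi-definiteness gives the lower bound
\[
\langle W^0\bsigma^0,\bsigma^0\rangle \;\geq\; -c_2\|\bsigma^0\|^2 \;-\; a\,\langle W^1\bsigma^0,\bsigma^0\rangle .
\]
The $W^1$ Rayleigh quotient is then controlled from above via the spectral-radius estimate $\rho_1(N;\dt)\le \tfrac{1}{3}$ (the $n=1$ case of \eqref{rhonbound2}), since $W^1$ is symmetric so $W^1 \preceq \rho_1(N;\dt)\,I \preceq \tfrac{1}{3}I$. Substituting this into the inner-product identity, and applying Cauchy--Schwarz on the right-hand side, gives
\[
\left(\half \;-\; c_2 \;-\; \tfrac{a}{3}\right)\|\bsigma^0\|^2 \;\leq\; \|\bsigma^0\|\cdot\|\mbf{f}^0\|.
\]

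The final step is the numerical verification that the coercivity constant beats $1/7$. With $c_2 = \half e^{-1/2}I_0(\half) + a/6$ and $a=0.05$, one checks $\half - c_2 - a/3 \approx 0.1525 > 1/7$, so dividing through by $\|\bsigma^0\|$ yields $\|\bsigma^0\|\leq 7\|\mbf{f}^0\|$ as claimed. No single step in this proof is itself difficult; the essential work was already done in Lemma~\ref{rho0bound2}, where the convex-sequence machinery of Theorem~\ref{lowerbound} was used to extract a sharp negative-definiteness bound for $W^0$ that Gershgorin could not supply. The shift by $aW^1$ is the crucial device that makes the combined sequence convex (it controls the $y_0 + y_2 \geq 2y_1$ condition), and the hypothesis $\dt\le 1$ enters precisely to ensure monotonicity of $e^{-x}I_0(x)$ on the relevant range.
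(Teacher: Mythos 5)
Your proof is correct and follows the paper's argument essentially verbatim: both invoke Lemma~\ref{rho0bound2} to bound $\lambda^0_{\min}\geq -c_2 - a\lambda^1_{\max}\geq -c_2-\tfrac{1}{3}a$ (using $\rho_1\le\tfrac{1}{3}$ from \eqref{rhonbound2}), substitute into the inner-product identity $\half\|\bsigma^0\|^2+(\bsigma^0)^TW^0\bsigma^0=-(\bsigma^0)^T\mbf{f}^0$, apply Cauchy--Schwarz, and check numerically that $\half-c_2-\tfrac{a}{3}\approx 0.1525\geq\tfrac{1}{7}$. The only cosmetic difference is that the paper's displayed chain has an apparent typo ($7\|\bsigma^0\|^2$ where $\tfrac{1}{7}\|\bsigma^0\|^2$ is meant), which your version states correctly.
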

\begin{proof}
  Set $a=0.05$.
  By Lemma \ref{rho0bound2}, the smallest eigenvalue of $W^0$ is bounded
  by
  \[
  \lambda^0_{\rm min}\geq -c_2-a\lambda^1_{\rm max}
  \geq -c_2-a\rho_1\geq -c_2-\frac{1}{3}a.
  \]
  Thus a simple bound using the value of $c_2$ from Lemma \ref{rho0bound2}
  is
  \begin{align*}
    7 \|\bsigma^0\|^2 & \le
    \left(\half -c_2-\frac{1}{3}a\right)\|\bsigma^0\|^2 \leq
  \half \|\bsigma^0\|^2+(\bsigma^0)^TW^0\bsigma^0=-(\bsigma^0)^T\mbf{f}^0 
   \nonumber \\
  &\leq \|\bsigma^0\| \|\mbf{f}^0\|,
  \end{align*}
  completing the proof.
\end{proof}
\subsection{The Dirichlet problem in higher dimensions} \label{sec:hd}

In dimensions $d>2$, we consider the Dirichlet problem on the unit ball, with data specified on the unit sphere $\sd$.
The unknown density $\sigma$ is decomposed using the corresponding spherical  harmonics \cite{morimoto} 
\[
\sigma(\by,\tau)=\sum_{n=0}^\infty\sum_{m=1}^{a_{n,d}}\sigma^{nm}(\tau)\ynm(\by)~, \qquad \by \in S^{d-1} \subset \mathbb{R}^d,
\; \tau\ge 0
\]
where
\[
a_{n,d}=(2n+d-2)\frac{(n+d-3)!}{n!(d-2)!}.
\]
Here, $a_{n,d}$ is the dimension of $H_n(\sd)$, the space of
homogeneous harmonic polynomials of degree $n$ on $\mathbb{R}^d,$
whose restrictions to the unit sphere are spanned by $\{\ynm\},$ the
spherical harmonics of degree $n$. When $d=3$, $a_{n,d}=2n+1$, the
inner summation is usually written as $\sum_{m=-n}^n$, and the
spherical harmonics $\ynm(\theta,\phi)$ are defined by
\[
Y_n^m(\theta,\phi)=\sqrt{\frac{2n+1}{4\pi}}\sqrt{\frac{(n-|m|)!}{(n+|m|)!}}
P_n^{|m|}(\cos\theta)e^{im\phi},
\]
where $P_n^m(\cos \theta)$ is the associated Legendre polynomial~\cite[\S18.3]{nisthandbook} 
of degree $n$ and order $m$.

The spherical harmonics admit the following
integral representation \cite{morimoto}
\be\label{ynmrep}
\ynm(\bx)=\frac{a_{n,d}}{\omega_d}\int_{\sd} P_{n,d-1}(\bx\cdot\by)\ynm(\by)dS(\by),
\ee
where $\omega_d$ is the area of $\sd$ defined in \eqref{spherearea},
and the $P_{n,d-1}$ are Gegenbauer polynomials~\cite[Chapter 2]{morimoto} (also called
ultraspherical polynomials), defined by the Rodrigues formula
\be\label{gegenbauer}
P_{n,d-1}(t)=\frac{(-1)^n}{2^n}\frac{\Gamma\left(\frac{d-1}{2}\right)}
{\Gamma\left(n+\frac{d-1}{2}\right)}
\frac{1}{(1-t^2)^{\frac{d-3}{2}}}\frac{d^n}{dt^n}(1-t^2)^{n+\frac{d-3}{2}}.
\ee

The Funk--Hecke formula~\cite[Chapter 2, Theorem 2.39]{morimoto} states that
\be\label{fhformula}
\int_{\sd}f(\bx\cdot\bz)P_{n,d-1}(\by\cdot\bz)dS(\bz)= \beta_{n,d-1}P_{n,d-1}(\bx\cdot\by),
\ee
where
\[
\beta_{n,d-1}=\omega_{d-1}\int_{-1}^1P_{n,d-1}(t)f(t)(1-t^2)^{\frac{d-3}{2}}dt
\]
and $f$ is any measurable function such that
\[
\int_{-1}^1|f(t)|(1-t^2)^{\frac{d-3}{2}}dt \; < \; \infty ~.
\]
In $d=3$ this reduces to $f\in L^1[-1,1]$.

We compute the double layer heat potential $nm$th Fourier mode,
\be
\ba
&\int_{\sd}\frac{\partial G(\bx-\by,t-\tau)}{\partial \bnu(\by)}\ynm(\by)dS(\by)\\
&=-\int_{\sd}
\frac{1-\xdy}{2^{d+1}\pi^{d/2}(t-\tau)^{1+d/2}}e^{-\frac{1-\xdy}{2(t-\tau)}}
\ynm(\by)dS(\by)\\
&=-\frac{a_{n,d}}{\omega_d}\int_{\sd}\gamma_{n,d}(\ttau) P_{n,d-1}(\bx\cdot\bz)\ynm(\bz)dS(\bz)\\
&=-\gamma_{n,d}(\ttau)\ynm(\bx)~,
\label{ynmint}
\ea
\ee
where, by analogy with \eqref{gamman2d},
\be\label{gamman}
\gamma_{n,d}(t):=\frac{\omega_{d-1}}{2^{d+1}\pi^{d/2}t^{(d+2)/2}}
\int_{-1}^1 
(1-x)e^{-\frac{1-x}{2t}} P_{n,d-1}(x)(1-x^2)^{(d-3)/2}dx ~.
\ee
The third equality makes use of
\eqref{ynmrep}, \eqref{fhformula}, and exchanging the order of integration.
The last step follows again from \eqref{ynmrep}.
Notice that $\gamma_{n,d}$ does not depend on the order $m$.

Since the $\{ \ynm \}$ form an orthonormal basis for functions in $L^2(\sd)$ and
\eqref{ynmint} shows that each spherical harmonic evolves independently under
the action of the double layer heat potential operator,
we may consider the time evolution for each mode $nm$ separately.

For the forward Euler scheme, we again assume that $\sigma(\bx,t)$ takes the constant value
$\sigma_j(\bx)=\sigma(\bx,j\dt)$ over each interval $[j\dt,(j+1)\dt]$,
$j=0,1,\dots$.
Equivalently, each spherical harmonic mode $\sigmanm(t)$ takes the constant
value $\sigmanm_j=\sigmanm(j\dt)$ over the interval. A straightforward calculation leads to the following
recurrence for the $nm$th spherical harmonic mode,
analogous to \eqref{moden2d}:
\be\label{ndmodek}
-\half \mu_j-\sum_{k=0}^{j-1}v^n_{j-k}\mu_k=g_j, \qquad j=0,1,2,\ldots ,
\ee
where we use the abbreviations $\mu_j:=\sigmanm_j$, $g_j=f^{nm}_j$,
and the matrix elements
\be
v^n_l=\int_0^{\dt}\gamma_{n,d}(l\dt-\tau)d\tau, \qquad l>0,
\label{vnl}
\ee
involve the kernel modes \eqref{gamman}, and, as before, $v^n_0=0$.

\subsubsection{Stability analysis}
The normalization in \eqref{gegenbauer} leads to
\cite{morimoto1980,muller}
\[
|P_{n,d-1}(x)|\leq 1=P_{0,d-1}(x), \quad x\in [-1,1].
\]
As the other terms in \eqref{gamman} are non-negative, we have
\[
|\gamma_{n,d}(t-\tau)|\leq \gamma_{0,d}(t-\tau) ~, \qquad t-\tau > 0~.
\]
An almost identical proof as in Lemma~\ref{l:C2T} leads to the following lemma.
\begin{lemma}
  Fix $T>0$.
  Then, for any $N$ and $\dt$ with $N\dt \le T$, and all $n\in \mathbb{Z}_+$,
  the spectral radius $\rho_{n,d}(N;\dt)$, of the symmetric
  Toeplitz matrix $W^n(N;\dt)$
  as defined by \eqref{wn2d} with $v_l^n$ given by \eqref{vnl},
  has the bound
\[
\rho_{n,d}(N;\dt) \;\leq\; C_d(T)~,
\]
where
\[
C_d(T):=\int_0^T\frac{\omega_{d-1}}{2^{d+1}\pi^{d/2}(T-\tau)^{(d+2)/2}}
\int_{-1}^1 
\frac{(1-x)}{e^{\frac{1-x}{2(T-\tau)}}}(1-x^2)^{(d-3)/2}dxd\tau < \half~.
\]
\end{lemma}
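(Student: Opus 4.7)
The plan is to mirror the proof of Lemma~\ref{l:C2T} essentially verbatim, replacing the 2D Fourier-mode kernel $\gamma_n$ by its spherical-harmonic analogue $\gamma_{n,d}$. The three ingredients carried over from the circle are: (i) a pointwise dominance $|\gamma_{n,d}(t)|\le\gamma_{0,d}(t)$, (ii) a Gershgorin bound applied to the zero-diagonal Toeplitz matrix $W^n$, and (iii) a telescoping of the resulting sum into the single integral $C_d(N\dt)$. The only genuinely $d$-dependent step is the final one, showing $C_d(T)<\half$ uniformly in $T>0$, and this is where the main (though modest) work lies.

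For step (i), the paper has already pointed out that $|P_{n,d-1}(x)|\le 1=P_{0,d-1}(x)$. In the integrand of \eqref{gamman} the remaining factor $(1-x)(1-x^2)^{(d-3)/2}=(1-x)^{(d-1)/2}(1+x)^{(d-3)/2}$ is non-negative on $[-1,1]$ for $d\ge 2$ (it is degenerate but tractable for $d=1$, which was treated separately), and the exponential is positive. Hence $|\gamma_{n,d}(t-\tau)|\le \gamma_{0,d}(t-\tau)$, which via \eqref{vnl} gives $|v^n_l|\le v^0_l$ for every $l\ge 1$. Because $W^n$ has zero diagonal, the Gershgorin circle theorem gives
\[
\rho_{n,d}(N;\dt) \;\le\; \max_i \sum_j |w^n_{ij}| \;\le\; \sum_{l=1}^{N} v^0_l.
\]
The telescoping argument from Lemma~\ref{l:C2T} then applies unchanged, collapsing $\sum_{l=1}^{N}v^0_l=\int_0^{N\dt}\gamma_{0,d}(N\dt-\tau)\,d\tau=C_d(N\dt)$, and monotonicity of $C_d$ (immediate from $\gamma_{0,d}\ge 0$) yields $C_d(N\dt)\le C_d(T)$.

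The main obstacle, showing $C_d(T)<\half$, I would handle by interchanging the order of integration in $C_d(T)$ and substituting $u=(1-x)/(2(T-\tau))$ in the $\tau$-integral. A routine calculation (analogous to the substitution $\lambda=(1-\cos s)/(2(T-\tau))$ used in the $d=2$ case) reduces the inner integral to the upper incomplete gamma function, giving
\[
C_d(T) \;=\; \frac{\omega_{d-1}}{2^{(d+2)/2}\pi^{d/2}}\int_{-1}^{1} (1-x)^{-1/2}(1+x)^{(d-3)/2}\,\Gamma\!\left(\tfrac{d}{2},\tfrac{1-x}{2T}\right)dx.
\]
Since $\Gamma(d/2,\eta)<\Gamma(d/2)$ for every $\eta>0$ (with $\eta=(1-x)/(2T)>0$ on $(-1,1]$ for finite $T$), and the remaining Beta integral evaluates to $2^{(d-2)/2}\,\Gamma(1/2)\,\Gamma((d-1)/2)/\Gamma(d/2)$, I can insert $\omega_{d-1}=2\pi^{(d-1)/2}/\Gamma((d-1)/2)$ and simplify. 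All gamma factors cancel, leaving exactly $\half$ as the upper bound. The strict inequality then follows because the strict bound $\Gamma(d/2,\eta)<\Gamma(d/2)$ holds on a set of positive measure in $[-1,1]$.

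The only minor subtlety I anticipate is verifying the Fubini interchange, which is clean since the full integrand is non-negative and the resulting iterated integral is finite. No other step requires any machinery beyond what Lemma~\ref{l:C2T} already used, so the rest is bookkeeping.
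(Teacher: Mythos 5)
Your proof is correct and takes essentially the same route as the paper: Gegenbauer domination $|P_{n,d-1}|\le 1$, Gershgorin applied to the zero-diagonal Toeplitz matrix, telescoping of the convolution weights, and a Fubini/substitution argument showing $C_d(T)$ increases monotonically to $\half$ as $T\to\infty$. Your explicit incomplete-gamma and Beta-integral calculation fleshes out the step the paper compresses into ``an almost identical proof as in Lemma~\ref{l:C2T}'' and, as a bonus, uses the correct exponent $\lambda^{(d-2)/2}$ in the inner integral where the paper's display after the lemma has a typographical $\lambda^{d/2}$.
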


As before, we are also able to bound from below the gap between
$C_d(T)$ and $\half$, given a weak condition on $T$.
For this, we interchange the order of integration and apply the change of
variable $\lambda=\frac{1-x}{2(T-\tau)}$, giving
\[
C_d(T)=\int_{-1}^1
\frac{\omega_{d-1}}{2^{d+1}\pi^{d/2}}\frac{(1+x)^{(d-3)/2}}{\sqrt{1-x}}
\left(2^{d/2}\int_{\frac{1-x}{2T}}^\infty\lambda^{d/2}e^{-\lambda}d\lambda\right)dx
\]
and
\[
\half -C_d(T)=
\frac{1}{2^{d/2}\sqrt{\pi}\Gamma\left(\frac{d-1}{2}\right)}
\int_{-1}^1
\frac{(1+x)^{(d-3)/2}}{\sqrt{1-x}}
\left(\int_0^{\frac{1-x}{2T}}\lambda^{d/2}e^{-\lambda}d\lambda\right)dx.
\]
Assume now $T\geq 1$. Then for $x\in [-1,1]$, $\frac{1-x}{2T}\leq 1$.
Thus, $e^{-\lambda}\geq e^{-1}$ for $\lambda\in[0,\frac{1-x}{2T}]$ and
\be
\ba
\half -C_d(T)&\geq
\frac{1}{2^{d/2}\sqrt{\pi}\Gamma\left(\frac{d-1}{2}\right)}
\int_{-1}^1
\frac{(1+x)^{(d-3)/2}}{\sqrt{1-x}}
\left(\frac{1}{e}\int_0^{\frac{1-x}{2T}}\lambda^{d/2}d\lambda\right)dx\\
&=\frac{1}{ed2^{d-1}\sqrt{\pi}\Gamma\left(\frac{d-1}{2}\right)T^{\frac{d}{2}}}
\int_{-1}^1
(1-x^2)^{(d-3)/2}(1-x)dx\\
&=\frac{2}{ed2^{d-1}\sqrt{\pi}\Gamma\left(\frac{d-1}{2}\right)T^{\frac{d}{2}}}
\int_{0}^1
(1-x^2)^{(d-3)/2}dx\\
&=\frac{2}{ed2^{d-1}\sqrt{\pi}\Gamma\left(\frac{d-1}{2}\right)T^{\frac{d}{2}}}
\int_0^{\frac{\pi}{2}}\cos^{d-2}(\theta)d\theta\\
&=\frac{1}{ed2^{d-1}\Gamma\left(\frac{d}{2}\right)
  T^{\frac{d}{2}}},
\ea
\label{cdbound}
\ee
where the last equality follows from an integral identity in
\cite[\S3.62]{Table}.

Armed with this polynomial control of the gap, and
following the same reasoning as used to show~\eqref{2dbound}, we obtain
the following theorem regarding the stability of the forward
Euler scheme in higher dimensions.
\begin{theorem} Fix $d>2$, and $T\ge 1$.
  For all $n=0,1,\ldots$ and $m=1,\ldots,a_{n,d}$,
\be
\|\bsigma^{nm}\|\leq \frac{1}{\half -C_d(T)}\|\mbf{f}^{nm}\|
\leq ed2^{d-1}\Gamma\left(\frac{d}{2}\right) T^{\frac{d}{2}}\|\mbf{f}^{nm}\|
\label{ndbound}
\ee
for all $N$, $\dt$ such that $N\dt\leq T$.
That is, when $\Gamma$ is the unit sphere $\sd$, the forward Euler scheme
  for solving the second kind Volterra integral equation \eqref{bie}
  is unconditionally stable on any finite time interval $[0,T]$.
\end{theorem}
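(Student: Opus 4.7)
The plan is to mimic the argument used in the proof of Theorem~\ref{2dthm1} for the 2D case, now leveraging the fact that the ingredients---the spectral radius bound $\rho_{n,d}(N;\dt)\leq C_d(T)$ and the polynomial lower bound \eqref{cdbound} on $\half-C_d(T)$---have already been established in the text just above the theorem statement. Since the $nm$-th spherical harmonic mode evolves independently under the action of the double layer operator (see \eqref{ynmint}), the recurrence \eqref{ndmodek} for this mode can be written in matrix form as $(-\half I - V^n)\bsigma^{nm} = \mbf{f}^{nm}$, where $V^n$ is the lower triangular Toeplitz matrix with entries $v^n_l$ from \eqref{vnl}.

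Next, I would split $\bsigma^{nm}$ and $\mbf{f}^{nm}$ into real and imaginary parts exactly as in \eqref{modek3}, handling each real system separately. For the real part, taking the Euclidean inner product of the matrix equation with $-(\bsigma_r^{nm})^T$ and using that $(\bsigma_r^{nm})^T V^n \bsigma_r^{nm} = (\bsigma_r^{nm})^T W^n \bsigma_r^{nm}$ (only the symmetric part of $V^n$ contributes), one gets
\[
\tfrac{1}{2}\|\bsigma_r^{nm}\|^2 + (\bsigma_r^{nm})^T W^n \bsigma_r^{nm} = -(\bsigma_r^{nm})^T \mbf{f}_r^{nm}.
\]
The spectral radius bound from the lemma immediately above the theorem gives $(\bsigma_r^{nm})^T W^n \bsigma_r^{nm} \geq -C_d(T)\|\bsigma_r^{nm}\|^2$, and Cauchy--Schwarz applied to the right-hand side yields
\[
\left(\tfrac{1}{2} - C_d(T)\right)\|\bsigma_r^{nm}\|^2 \leq \|\bsigma_r^{nm}\|\,\|\mbf{f}_r^{nm}\|.
\]

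Dividing through and invoking the estimate \eqref{cdbound} for $T\ge 1$ gives $\|\bsigma_r^{nm}\| \le e d\, 2^{d-1} \Gamma(d/2)\, T^{d/2} \|\mbf{f}_r^{nm}\|$. The identical argument applies to the imaginary part, and combining them via $\|\bsigma^{nm}\| = \sqrt{\|\bsigma_r^{nm}\|^2 + \|\bsigma_i^{nm}\|^2}$ yields \eqref{ndbound}. Unconditional stability follows because the bound depends only on $T$ and $d$, not on $N$ or $\dt$ individually.

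I don't expect any real obstacle in this final step: the hard work---establishing the Gershgorin-type spectral bound $\rho_{n,d}\le C_d(T)$ using the boundedness $|P_{n,d-1}(x)|\le 1$ of the Gegenbauer polynomials, and extracting the explicit gap bound $\half - C_d(T) \ge 1/(ed 2^{d-1}\Gamma(d/2)T^{d/2})$ by interchanging integrals and using the elementary $e^{-\lambda}\ge e^{-1}$ on $[0,1]$---has already been carried out. The only point worth double-checking is that the symmetrization step ($V^n \mapsto W^n$) is valid in the complex split, but this is purely algebraic since $V^n$ is real, so no complication arises.
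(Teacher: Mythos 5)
Your proposal is correct and reproduces precisely the approach the paper intends: the paper's ``proof'' of this theorem is simply the sentence ``following the same reasoning as used to show~\eqref{2dbound}'', and you have carried out that reasoning faithfully --- the same inner-product/symmetrization trick, the spectral-radius bound $\rho_{n,d}\le C_d(T)$, and the gap estimate~\eqref{cdbound}, applied mode by mode and combined over real and imaginary parts.
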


\begin{remark}
  When $d=2$, $H_n$ is spanned by $e^{in\theta}$ and $e^{-in\theta}$.
  The decomposition of $L^2(S^1)$ into spherical harmonics is the usual
  Fourier series expansion. And if we identify $P_{n,1}(x)$ with the Chebyshev
  polynomials $T_n(x)$, then all calculations in this subsection are valid
  for $d=2$. We instead presented the analysis in two dimensions using the usual
  Fourier series for the reader's convenience. 
\end{remark}

\begin{remark}
  It is easy to see that the bound \eqref{ndbound} actually
  also includes the cases of $d=1$ and $d=2$ proved earlier, thus
  holds for all $d\ge1$.
\end{remark}
  
%
%
\subsection{The Neumann problem on the unit ball}
For the Neumann condition \eqref{neumannbc}, we represent $u^{(B)}$ as
the single layer potential $\cs[\sigma]$. The jump relation \eqref{snjump}
leads to the second kind Volterra integral equation
\be\label{neumannbie}
\left(\half+\cs_{\bnu} \right)[\sigma](\bx,t)= 
\tilde{g}(\bx,t), \quad (\bx,t)\in \Gamma\times [0,T],
\ee
where $\cs_{\bnu}$ indicates the normal derivative
of the single layer with respect to the target point, restricted to $\Gamma$, interpreted in a principal value sense,
as in section~\ref{sec:potentials}.
In \eqref{neumannbie} the right hand side is the corrected data
\[ \tilde{g}(\bx,t) := g(\bx,t) - \frac{\partial u^{(F)}(\bx,t)}{\partial \bnu}~,
\qquad \bx\in\Gamma~.
\]
On the unit sphere $\sd$, straightforward calculation shows that the kernel
of the double layer potential $\cd$ is exactly the same as that of $\cs_\bnu$.
Thus, the forward Euler scheme for \eqref{neumannbie} leads to the identical
marching matrix except a sign change in diagonal entries. Since we prove the
bound \eqref{ndbound} by bounding the spectral radius of the marching matrix
excluding the diagonal part, we observe that \eqref{ndbound} holds
as well for \eqref{neumannbie} with $f$ replaced by $\tilde{g}$.  This leads
to the unconditional stability of the forward Euler scheme for the Neumann
problem on the unit ball, for all $d\ge1$.
%
%


\section{The Robin problem on the half space} \label{sec:robin}

For the Robin boundary condition \eqref{robinbc}, we also represent
$u^{(B)}$ via a single layer potential $\cs[\sigma]$. 
The jump relation \eqref{snjump} leads to the 2nd-kind Volterra
equation
\be\label{robinbie0}
\left(\half+\cs_{\bnu}+\kappa \cs \right)[\sigma](\bx,t)= 
\tilde{h}(\bx,t), \quad (\bx,t)\in \Gamma\times [0,T],
\ee
with corrected Robin data
\be
\tilde{h}(\bx,t) := h(\bx,t)  - \frac{\partial u^{(F)}(\bx,t)}{\partial \bnu}
- \kappa u^{(F)}(\bx,t)~.
\label{robcorr}
\ee
When $D=\mathbb{R}^d_+$, where $\Gamma$ is naturally identified as
$\mathbb{R}^{d-1} \subset \mathbb{R}^d$, the kernel of $\cs_\bnu$ is identically zero
due to the fact that $(\bxy) \cdot \bnu_\bx=0$. Thus, \eqref{robinbie0}
reduces to
\be\label{robinbie}
\left(\half+\kappa \cs \right)[\sigma](\bx,t)= 
\tilde{h}(\bx,t), \quad (\bx,t)\in \mathbb{R}^{d-1}\times [0,T].
\ee
Here we assume that $\tilde{h}$ is sufficiently smooth and decays sufficiently
fast at infinity so that the problem is well posed.

\subsection{The Robin problem in one dimension}
\label{s:robin1d}

In one dimension, the boundary $\Gamma$ of the half line consists of
a single point $x=0$. The integral equation \eqref{robinbie}
reduces to the Abel integral equation
(multiplying both sides by two, and denoting the right-hand side by $f$ instead): 
\begin{equation}
\sigma(t)+\frac{\kappa}{\sqrt{\pi}}\int_0^t
\frac{\sigma(\tau)}{\sqrt{t-\tau}}d\tau \;=\; 2f(t).
\label{robinbie1d}
\end{equation}
Before discretizing, we show stability of the continuous problem for $\kappa>0$.
The Riemann--Liouville fractional integral operator
$\mathcal{R}_\alpha$ is defined by the formula
\[
\mathcal{R}_\alpha[g](t)=\frac{1}{\Gamma(\alpha)}
\int_0^t\frac{g(\tau)}{(t-\tau)^{1-\alpha}}d\tau, \qquad \alpha\in (0,1),
\]
where $\Gamma(\alpha)$ is
the gamma function~\eqref{gammadef}. Thus, the integral
operator on the left side of \eqref{robinbie1d} is simply
$\frac{\Gamma(1/2) \kappa}{\sqrt{\pi}} \mathcal{R}_{1/2}=
 \kappa \mathcal{R}_{1/2}$.
 For all real functions $g$,
$\mathcal{R}_\alpha$ satisfies
the positivity property \cite[Lemma 3.1]{mustapha2014}
\begin{equation}
    \int_0^Tg(t)\mathcal{R}_\alpha[g](t)dt \ge 0~.
    \label{nonneg}
\end{equation}
Taking the inner product of \eqref{robinbie1d} with
$\sigma$ over a fixed interval $[0,T]$, and using \eqref{nonneg}, gives
\[
\|\sigma\|_{L^2([0,T])}^2 \le 2 (\sigma,f) \le 2 \|\sigma\|_{L^2([0,T])}
\|f\|_{L^2([0,T])}
\]
where Cauchy--Schwartz was used in the last step.
So on any finite interval $[0,T]$ this gives
the continuous version of the $L^2$ stability bound
\[
\|\sigma\|\le 2\|f\|~.
\]

We now proceed to discretization. Recall that
the forward Euler scheme uses a piecewise constant approximation
$\sigma(t) \approx \sigma_m:=\sigma(t_m)$ on $[t_m,t_{m+1})$
on the uniform grid $t_m = m\dt$.
Then performing the integrals exactly in \eqref{robinbie1d} gives
the explicit marching rule
\be\label{mun}
\sigma_n = 2f_n - \sum_{m=0}^{n-1} v_{n-m}\sigma_m, 
 \qquad n=1,\ldots,N
\ee
with the lower-triangular Toeplitz matrix weights
\be  \label{wdef}
v_j = 2\sqrt{h}(\sqrt{j}-\sqrt{j-1}) = 
\frac{2\sqrt{h}}{\sqrt{j} + \sqrt{j-1}}~, \qquad j=1,2,\ldots~,
\ee
and where $f_n := f(t_n)$,
and $h := \kappa^2 \, \dt/\pi$.
For smooth solutions $\sigma \in C^1([0,T])$,
this rule can be proved to be first-order accurate
by combining compactness of the integral operator,
C\'ea's lemma, and noting that
the piecewise constant approximant has error $\bigO(\dt)$
(see \cite[Sec.~13.1--3]{kress2014}).

For initialization, as before we set $\sigma_0 = f_0 = 0$.
Let us define the vectors $\bsigma$ and $\mbf{f}$ by
$\{\sigma_n\}_{n=0}^N,  
\{f_n\}_{n=0}^N  \in \RR^{N+1}$, respectively.
Using this notation, \eqref{mun} takes the form of
the lower-triangular Toeplitz linear system
\be
(I + V)\bsigma = 2 \mbf{f}~,
\label{munv}
\ee
where $V\in\RR^{(N+1)\times(N+1)}$
has elements $v_{n,m} = v_{n-m}$ for $n>m$, and $v_{n,m} = 0$ otherwise.
Here, $v_n$ is defined in \eqref{wdef} with $h = \kappa^2 \, \dt/\pi$.

There is a substantial literature on the numerical analysis and stability of 
Volterra equations in the one-dimensional setting.
For a discussion of convergence theory and step-size control, see
\cite{baker2000,jonesmckee1982} and the monograph \cite{Brunner2004}.
Much work on stability has been devoted to an analysis of the model problem
\[ y(t) + \int_0^t [\lambda_0 + \lambda_1 (t-\tau)] y(\tau) \, d\tau = f(t), \]
or to problems with a continuous kernel 
\cite{jonesmckee1982,messinavecchio}.
In \cite{lubich1983ima}, a more relevant stability result is obtained for systems
of the form \eqref{munv}, but assuming that the 
sequence $\{v_j\}$ is in $l^1$, which is not the
case here.

For previous work on Abel-type equations with singular kernels, 
we refer the reader to
\cite{eggermont,lubich1983mcom,lubich1985mcom,vogeli2018acom}. 
These papers, however, are mostly concerned with implicit marching schemes.
An exception is Lubich's 1986 paper \cite{lubich1986ima}, which does a
careful stability analysis for a variety of schemes and makes clear the connection between
completely monotonic sequences and stability. An interesting
result from that paper is Corollary 2.2, which states that 
``the stability region of an explicit convolution quadrature $\dots$ is bounded." 
Theorem \ref{1dbound} below, which is consistent with Lubich's
result, gives a precise value for the time step restriction.
It also guarantees that $\sigma$ decays once the right-hand side $f$ has switched off.

\begin{theorem}\label{1dbound}
  There is a constant $0<c< 3-\sqrt{2}$ such that, for any $N$ and any
  $\mbf{f} \in \RR^{N+1}$,
  the solution to \eqref{munv} obeys
  \be
  \|\bsigma\| \le \frac{2}{1-c\sqrt{h}} \|\mbf{f}\| ~,
  \label{bnd}
  \ee
  where $\|.\|$ denotes the $l^2$-norm.
That is, the marching scheme \eqref{mun}
is stable for $h < 0.39 < (1/c)^2$ or
$\dt < \pi/(c^2 \kappa^2)$,  where $\kappa$ is the heat transfer coefficient.
\end{theorem}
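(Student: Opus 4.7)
The plan is to apply the energy-method approach used for the Dirichlet cases in Theorems \ref{1dthm} and \ref{2dthm1}: take the inner product of the marching equation $(I+V)\bsigma = 2\mbf{f}$ with $\bsigma \in \RR^{N+1}$, replace $V$ by its symmetric part $W := (V+V^T)/2$ (the antisymmetric part contributes nothing to the quadratic form), and apply Cauchy--Schwarz on the right to obtain
\[
\|\bsigma\|^2 + \langle W\bsigma, \bsigma\rangle \;=\; 2\langle \mbf{f}, \bsigma\rangle \;\le\; 2\|\bsigma\|\|\mbf{f}\|.
\]
Here $W$ is the symmetric Toeplitz matrix with zero diagonal and entries $W_{ij} = \tfrac{1}{2} v_{|i-j|}$ for $i\neq j$, with $v_j$ given by \eqref{wdef}. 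Any lower bound of the form $\langle W\bsigma,\bsigma\rangle \ge -\lambda\|\bsigma\|^2$ with $\lambda < 1$ immediately yields $\|\bsigma\| \le 2(1-\lambda)^{-1}\|\mbf{f}\|$, reducing the problem to a sharp lower bound on the spectrum of $W$.

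The main step is to invoke Theorem~\ref{lowerbound}. Define $\tilde v_n := \tfrac{1}{2}v_n = \sqrt{h}(\sqrt{n}-\sqrt{n-1})$ for $n \ge 1$, and set $\tilde v_0 := 2\tilde v_1 - \tilde v_2 = \sqrt{h}(3-\sqrt{2})$. I would verify that $\{\tilde v_n\}_{n\in\mathbb{Z}_+}$ is convex with $\tilde v_n \to 0$: the decay is immediate from $\tilde v_n = \sqrt{h}/(\sqrt{n}+\sqrt{n-1})$; convexity for $n\ge 2$ reduces to the non-negativity of the third forward difference of $\sqrt{\cdot}$, which follows from $f'''(x) = 3/(8 x^{5/2}) > 0$ together with the mean-value formula $\Delta^3 f(k) = f'''(\xi)$ for some $\xi \in (k,k+3)$; at $n=1$ convexity is forced by the very choice of $\tilde v_0$. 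Theorem~\ref{lowerbound} then gives
\[
\langle W\bsigma, \bsigma\rangle \;\ge\; (\tilde v_2 - 2\tilde v_1)\|\bsigma\|^2 \;=\; -(3-\sqrt{2})\sqrt{h}\,\|\bsigma\|^2,
\]
so $\lambda = (3-\sqrt{2})\sqrt{h}$ and the naive constant is $c = 3-\sqrt{2}$, yielding stability for $h < 1/(3-\sqrt{2})^2 \approx 0.398$, which comfortably accommodates the stated restriction $h < 0.39$.

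The main obstacle is sharpening the constant from $c = 3-\sqrt{2}$ to one strictly less than $3-\sqrt{2}$ as asserted. Equality is attained in my calculation because $\tilde v_0 = 2\tilde v_1 - \tilde v_2$ is the \emph{minimal} diagonal supplement making $\{\tilde v_n\}_{n\ge 0}$ convex, and Theorem~\ref{lowerbound} then loses this whole $\tilde v_0$ when correcting for the actual zero diagonal of $W$. The refinement should exploit the fact that the non-negative symbol $v(\theta) = \sum_{n\ge 1} n(\delta^2 \tilde v_n) F_{n-1}(\theta)$ produced by Theorem~\ref{convexfourier} is strictly positive away from $\theta=0$ and has a quantifiable positive contribution $\int v(\theta)|u(\theta)|^2\,d\theta$ that is not entirely absorbed by the zero-diagonal correction. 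Making this quantitative, for example by establishing a fixed positive lower bound $v(\theta) \ge \epsilon$ on an interval bounded away from $\theta = 0$, should deliver a constant $c$ strictly below $3-\sqrt{2}$, consistent with the round figure $h < 0.39$ stated in the theorem.
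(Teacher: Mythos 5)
Your argument is precisely the paper's: symmetrize the marching matrix, recognize the symmetric part (up to the scaling $\sqrt{h}$) as a principal submatrix of the Toeplitz operator generated by the convex sequence $\sqrt{j}-\sqrt{j-1}$, invoke Theorem~\ref{lowerbound} and Remark~\ref{finitetoeplitz} to get $\langle W\bsigma,\bsigma\rangle\ge -(3-\sqrt{2})\sqrt{h}\|\bsigma\|^2$, and close with the energy/Cauchy--Schwarz argument. Your detailed verification of convexity via $(\sqrt{x})'''>0$ and the mean-value form of the third difference fills in what the paper leaves as ``straightforward to check,'' and is correct.

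On your final paragraph: the worry about sharpening $c$ strictly below $3-\sqrt{2}$ is not actually a gap in your argument relative to the paper --- the paper's own proof delivers exactly $c=3-\sqrt{2}$, and Remark~\ref{robinremark1} immediately afterward says so explicitly, pointing out that the optimal $c\approx 1.5204$ (strictly below $3-\sqrt{2}$) is obtained only by numerical evaluation of the symbol at $\theta=\pi$, not by a rigorous refinement of the Gershgorin/convexity bound. So you should not have tried to improve the constant; the stated inequality $h<0.39<(1/c)^2$ already holds with $c=3-\sqrt{2}$ since $(3-\sqrt{2})^{-2}\approx 0.3976>0.39$, which is all the proof needs. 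Your speculative refinement via a uniform positive lower bound on $v(\theta)$ away from the origin is in the right spirit, but it is not what the paper does and is not required.
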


\begin{proof}
  We first show that there exists a constant $c>0$ such that
  \be
  \bsigma^T V\bsigma \ge -c \sqrt{h} \|\bsigma\|^2 \qquad 
  \mbox{for any} \quad \bsigma \in \RR^{N+1},
  \label{below}
  \ee
  \ie that the smallest eigenvalue of $V$ is bounded from below.
  Writing $\sqrt{h} W_{N+1} := \half (V+V^T)$ as the scaled symmetric part
  of $V$, note that
  $\bsigma^T V \bsigma = \sqrt{h} \bsigma^T W_{N+1} \bsigma$, and that $W_{N+1}$ is
  independent of the time-step.
  Note that $W_{N+1}$ is the $(N+1)\times(N+1)$ upper left principal submatrix
  of the infinite symmetric
  Toeplitz matrix $T_v$, defined by the sequence $0, v_1, v_2, \ldots$
  with 
\[
  v_j = \frac{1}{\sqrt{j}+\sqrt{j-1}}=
  \sqrt{j}-\sqrt{j-1},\qquad j\in\mathbb{N}.
\]
  It is straightforward to check that the sequence $\{v_j\}_{j\in\mathbb{N}}$
  is convex and that $\lim_{j\rightarrow \infty}v_j=0$. By Theorem~\ref{lowerbound}
  and Remark~\ref{finitetoeplitz}, we have
  \[
  \bsigma^T W_{N+1}\bsigma \geq (v_2-2v_1) \|\bsigma\|^2.
  \]
  That is, \eqref{below} holds if $c=2v_1-v_2=3-\sqrt{2}$.
  To complete the proof, take the inner product of \eqref{munv}
  with $\bsigma$ to get
\[
  \|\bsigma\|^2 + \bsigma^T V \bsigma = 2\bsigma^T\mbf{f} \, .
\]
  Applying \eqref{below} to the left-hand side and the Cauchy--Schwartz inequality
  to the right-hand side, we have
  \[
  (1 - c\sqrt{h})\|\bsigma\|^2 \le 2 \|\bsigma\| \|\mbf{f}\| \, ,
  \]
  from which \eqref{bnd} follows for any $\bsigma\neq \mbf{0}$. It holds trivially
  when $\bsigma = \mbf{0}$.
\end{proof}

\begin{remark}
The above proof gives $c=3-\sqrt{2} \approx 1.5858$. 
By numerically computing the smallest eigenvalue of successively larger Toeplitz matrices
$V$, or, better, by evaluating $v(\pi)=2\sum_{j>0} (-1)^{j-1} v_j$,
one can obtain an optimal estimate of $c \approx 1.52041925043874$.
We omit the details of this computation and mention it only to illustrate that
the explicit bound is within about 4\% of the optimal one.
\label{robinremark1}
\end{remark}

\begin{remark}
  With unit diffusion constant, the transfer coefficient $\kappa$ has units $(\mbox{length})^{-1}$. Thus our time-step condition
  $\dt < \pi/(c\kappa)^2$ is proportional to the square of the physical length $1/\kappa$.
  Although reminiscent of the explicit finite-difference stability condition $\dt < c \dx^2$, our stability condition is, by contrast, independent of any spatial discretization. (Indeed, in practice
  the only spatial discretization needed would be quadrature to evaluate
  \eqref{uF} to get $f(t)$, as in \eqref{robcorr}. With $f(t)$ computed, there is no spatial variable left to discretize.)
\end{remark}
\subsection{The Robin problem in higher dimensions}
In higher dimensions $d\ge 2$, the boundary $\Gamma$ of the half space $\mathbb{R}^d_+$
can be identified as $\mathbb{R}^{d-1}$ by natural embedding.
The integral equation \eqref{robinbie} is rewritten as
\be
\sigma(\bx,t)+\frac{\kappa}{\sqrt{\pi}}\int_0^t
\frac{1}{\sqrt{t-\tau}}\int_{\mathbb{R}^{d-1}}
\frac{1}{(4\pi (t-\tau))^{(d-1)/2}}e^{-\frac{|\bx-\by|^2}{4(t-\tau)}}
\sigma(\by,\tau)d\by d\tau \;=\; 2f(\bx,t),
\quad \bx,\by \in \mathbb{R}^{d-1}.
\label{robinbiend}
\ee
Again we have multiplied both sides by two and denote the right hand side by $f$.
We observe that the kernel inside the spatial integral on the left side of
\eqref{robinbiend} is exactly the heat kernel in $\mathbb{R}^{d-1}$. It is well known
that the Fourier transform of the heat kernel $G(\bx,t)$
in $\mathbb{R}^{d-1}$
is simply $e^{-|\bxi|^2t}$,
in terms of the Fourier variable $\bxi\in\mathbb{R}^{d-1}$.
Using this fact and that the convolution in physical space becomes pointwise multiplication in frequency,
and taking the Fourier transform in $\mathbb{R}^{d-1}$ of
both sides of \eqref{robinbiend},
we obtain
\be
\hat{\sigma}(\bxi,t)+\frac{\kappa}{\sqrt{\pi}}\int_0^t
\frac{e^{-|\bxi|^2(t-\tau)}}{\sqrt{t-\tau}}
\hat{\sigma}(\bxi,\tau) d\tau \;=\; 2\hat{f}(\bxi,t),
\quad \bxi \in \mathbb{R}^{d-1}~.
\label{robinbiendf}
\ee
Note that in the special case $\bxi=0$ this recovers
\eqref{robinbie1d}.

Fixing $\bxi$, we proceed similarly as in the one-dimensional case.
That is,
we approximate $\hat{\sigma}(\bxi,t)$ by a constant
$\hat{\sigma}_m(\bxi):=\hat{\sigma}_m(\bxi,t_m)$ on $[t_m,t_{m+1})$
with $t_m = m\dt$, and perform the integrals exactly.
Let us define the vectors $\bsigmahat$ and $\bfhat$ by
$\{\hat{\sigma}_n(\bxi)\}_{n=0}^N,  
\{\hat{f}_n(\bxi)\}_{n=0}^N  \in \RR^{N+1}$, respectively.
Using this notation, the forward Euler scheme for \eqref{robinbiendf} takes the form of
the lower-triangular Toeplitz linear system
\be
(I + \Vhat)\bsigmahat = 2 \bfhat~,
\label{sigmahatfe}
\ee
where $\Vhat\in\RR^{(N+1)\times(N+1)}$
has elements $v_{n,m}(\bxi) = v_{n-m}(\bxi)$ for $n>m$, and $v_{n,m} = 0$ otherwise.
Here, $v_n$ is defined by
\be
v_n(\bxi) = 2\sqrt{h} \frac{1}{2\sqrt{\dt}}
\int_0^{\dt} \frac{e^{-|\bxi|^2(n\dt-\tau)}}{\sqrt{n\dt-\tau}}d\tau
\label{vnnd}
\ee
with, as before, $h = \kappa^2 \, \dt/\pi$.

\begin{lemma}
For any $\dt>0$ and any fixed $\bxi$, the sequence $\{v_n(\bxi)\}_{n\in \mathbb{N}}$ is convex.
\label{vnconvex}
\end{lemma}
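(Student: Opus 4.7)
\medskip

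\noindent\textbf{Proof plan.} The plan is to rewrite each $v_n(\bxi)$ as an integral over a single interval of length $\dt$ of a function that can be shown, via elementary calculus, to be convex in the continuous sense on $(0,\infty)$; discrete convexity of the sequence is then inherited from continuous convexity of the integrand.

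First, in \eqref{vnnd} I would substitute $s = n\dt - \tau$ to rewrite
\[
v_n(\bxi) \;=\; \frac{\sqrt{h}}{\sqrt{\dt}} \int_{(n-1)\dt}^{n\dt} g(s)\, ds, \qquad
\text{where } g(s) := \frac{e^{-|\bxi|^2 s}}{\sqrt{s}},
\]
valid for $n\ge 1$. Next, with a common substitution $s = \sigma + (n-1)\dt$ for each of the three terms in the second difference, one obtains
\[
v_{n-1}(\bxi) - 2 v_n(\bxi) + v_{n+1}(\bxi)
\;=\; \frac{\sqrt{h}}{\sqrt{\dt}} \int_0^{\dt}
\bigl[\, g\bigl(\sigma + (n-2)\dt\bigr) - 2 g\bigl(\sigma + (n-1)\dt\bigr)
+ g\bigl(\sigma + n\dt\bigr) \,\bigr]\, d\sigma,
\]
for each $n\ge 2$. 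Thus the discrete second difference of $\{v_n(\bxi)\}$ is an integral of the discrete second difference of $g$, centred at $\sigma + (n-1)\dt \in [\dt,\,(n-1)\dt + \dt]$, with spacing $\dt$.

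It therefore suffices to show that $g(s) = e^{-as}s^{-1/2}$, with $a := |\bxi|^2 \ge 0$, is convex on $(0,\infty)$, since any function convex on an interval has non-negative centred second differences there. A direct differentiation gives
\[
g''(s) \;=\; e^{-as}\left[ a^2 s^{-1/2} + a\, s^{-3/2} + \tfrac{3}{4}\, s^{-5/2} \right],
\]
and every term is manifestly non-negative for $s>0$, so $g''\ge 0$ on $(0,\infty)$. Hence the integrand above is pointwise non-negative for every $\sigma \in [0,\dt]$ and every $n\ge 2$, and the second difference $v_{n-1}(\bxi) - 2 v_n(\bxi) + v_{n+1}(\bxi) \ge 0$, establishing convexity of $\{v_n(\bxi)\}_{n\in\mathbb{N}}$.

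There is no real obstacle here: once one rewrites $v_n$ as an integral over $[(n-1)\dt, n\dt]$, the argument reduces to the elementary continuous convexity of $e^{-as}/\sqrt{s}$. In the degenerate case $a = 0$ (i.e.\ $\bxi = 0$) this recovers the convexity of $v_j = \sqrt{j} - \sqrt{j-1}$ already used in the proof of Theorem~\ref{1dbound}, consistent with the fact that \eqref{robinbiendf} reduces to \eqref{robinbie1d} when $\bxi = 0$.
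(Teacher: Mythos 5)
Your proof is correct and takes essentially the same approach as the paper: both reduce the claim to the positivity of the second derivative of $s \mapsto e^{-|\bxi|^2 s}/\sqrt{s}$, and the explicit computation of that second derivative is identical to the paper's $p''(t)$. The only (cosmetic) difference is that the paper wraps the integrand into $g(t) = \int_{t-1}^t e^{-xu}/\sqrt{u}\,du$ and differentiates $g$ twice, whereas you argue directly that integrating the pointwise-nonnegative centred second difference of a convex integrand over $[0,\dt]$ yields a nonnegative discrete second difference; these are two phrasings of the same observation.
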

\begin{proof}
  Let $x=|\bxi|^2\dt$. Applying the change of variables $u=n-\tau/\dt$ on the integral
  in \eqref{vnnd} leads to
  \be
  v_n(\bxi)=\sqrt{h} \int_{n-1}^n \frac{e^{-xu}}{\sqrt{u}}du.
  \label{vnnd1}
  \ee
  Thus, in order to show that $\{v_n(\bxi)\}_{n\in \mathbb{N}}$ is a convex sequence,
  we only need to show that the function
  \[
  g(t)=\int_{t-1}^t \frac{e^{-xu}}{\sqrt{u}}du
  \]
  is convex for $t\geq 1$. Here $x\ge 0$ is a fixed parameter.
  Differentiating $g(t)$ twice leads to
  \[
  g''(t) = p'(t)-p'(t-1)
  \]
  with
  \[
  p(t)=\frac{e^{-xt}}{\sqrt{t}}.
  \]
  Now
  \[
  p''(t) = \frac{3\,{\mathrm{e}}^{-t\,x}}{4\,t^{5/2}}+\frac{x\,{\mathrm{e}}^{-t\,x}}{t^{3/2}}+\frac{x^2\,{\mathrm{e}}^{-t\,x}}{\sqrt{t}},
  \]
  which is positive for any $x\ge 0$ and $t>0$.
  This shows that $p'(t)$ is monotonically increasing for $t>0$.
  Thus, $p'(t)>p'(t-1)$ for $t\ge 1$, and $g''(t)>0$ for $t\ge 1$, completing the proof.
\end{proof}

\begin{lemma}
  For any $\bxi \in \mathbb{R}^{d-1}$,
  \be
  v_2(\bxi)-2v_1(\bxi) \ge v_2(0)-2v_1(0) = -2(3-\sqrt{2})\sqrt{h}.
  \label{v0bound}
  \ee
\end{lemma}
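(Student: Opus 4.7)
The plan is to reduce~\eqref{v0bound} to a scalar monotonicity statement in one variable. From the representation~\eqref{vnnd1}, one has $v_2(\bxi) - 2v_1(\bxi) = \sqrt{h}\,G(x)$ with $x = |\bxi|^2\dt \ge 0$ and
\[
G(x) \; := \; \int_1^2 \frac{e^{-xu}}{\sqrt{u}}\,du \; - \; 2\int_0^1 \frac{e^{-xu}}{\sqrt{u}}\,du,
\]
and a direct calculation gives $G(0) = (2\sqrt{2}-2) - 4 = -2(3-\sqrt{2})$, matching the claimed right-hand side of~\eqref{v0bound}. Thus the task reduces to showing $G(x) \ge G(0)$ for all $x \ge 0$, which I would establish by proving that $G$ is non-decreasing on $[0,\infty)$.

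The key trick will be to differentiate under the integral sign and then apply the rescaling $u = t/x$ (valid for $x > 0$) to each of the two resulting integrals; this collapses both onto a single antiderivative. Concretely, setting $\psi(y) := \int_0^y \sqrt{t}\,e^{-t}\,dt$, one obtains
\[
G'(x) \;=\; \frac{1}{x^{3/2}}\bigl[\,3\psi(x) \,-\, \psi(2x)\,\bigr].
\]
Hence the sign of $G'(x)$ is governed by the single scalar inequality $3\psi(x) \ge \psi(2x)$ on $[0,\infty)$.

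To verify this last inequality, note that both sides vanish at $x=0$ and differentiate:
\[
\frac{d}{dx}\bigl[3\psi(x) - \psi(2x)\bigr] \;=\; 3\sqrt{x}\,e^{-x} - 2\sqrt{2x}\,e^{-2x} \;=\; \sqrt{x}\,\bigl(3e^{-x} - 2\sqrt{2}\,e^{-2x}\bigr).
\]
The bracket is nonnegative iff $3 \ge 2\sqrt{2}\,e^{-x}$, which holds for every $x \ge 0$ since $e^{-x} \le 1 < 3/(2\sqrt{2})$. Thus $3\psi(x) - \psi(2x)$ is non-decreasing from zero and so is nonnegative on $[0,\infty)$, giving $G'(x) \ge 0$ and therefore $G(x) \ge G(0)$, as required. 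I do not anticipate a serious obstacle; the only technical point is the behaviour of the formula for $G'$ as $x \to 0^+$, which is harmless since $\psi(y) \sim \tfrac{2}{3}y^{3/2}$ yields the finite limit $G'(0^+) = \tfrac{2}{3}(3 - 2\sqrt{2}) > 0$, consistent with the direct differentiation of $G$ at the origin.
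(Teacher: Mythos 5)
Your proposal is correct, and it follows the same overall strategy as the paper: reduce via~\eqref{vnnd1} to showing that the scalar function $G(x)=\int_1^2 u^{-1/2}e^{-xu}\,du - 2\int_0^1 u^{-1/2}e^{-xu}\,du$ is non-decreasing on $[0,\infty)$, where $x=|\bxi|^2\dt$. The paper stops after one differentiation: it writes $G'(x)=3\int_0^1 \sqrt{u}\,e^{-xu}\,du - \int_0^2 \sqrt{u}\,e^{-xu}\,du$, substitutes $u\mapsto 2u$ in the second integral to get $2\sqrt{2}\int_0^1 \sqrt{u}\,e^{-2xu}\,du$, and then uses $3>2\sqrt{2}$ together with $e^{-xu}\ge e^{-2xu}$ pointwise to conclude $G'\ge 0$ directly. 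You instead rescale $u=t/x$ to collapse both integrals onto a single antiderivative $\psi$, write $G'(x)=x^{-3/2}\bigl[3\psi(x)-\psi(2x)\bigr]$, and then differentiate once more, reducing the problem to $3e^{-x}\ge 2\sqrt{2}\,e^{-2x}$, which again comes down to $3>2\sqrt{2}$. So the crucial numerical inequality is the same; your version costs one extra differentiation and requires the small remark about the $x\to 0^+$ limit (which you correctly supply), while the paper's one-line inequality manipulation is slightly more economical. Either argument is sound.
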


\begin{proof}
  Using the expression \eqref{vnnd1}, we only need to show that
  \[f(x):=\int_1^2 \frac{e^{-xu}}{\sqrt{u}}du-2\int_0^1 \frac{e^{-xu}}{\sqrt{u}}du\ge f(0)\]
  for $x\ge 0$.
  For this, we calculate
  \[
  \ba
  f'(x)&=2\int_0^1 \sqrt{u}e^{-xu}du-\int_1^2 \sqrt{u}e^{-xu}du\\
  &=3\int_0^1 \sqrt{u}e^{-xu}du-\int_0^2 \sqrt{u}e^{-xu}du\\
  &=3\int_0^1 \sqrt{u}e^{-xu}du-2\sqrt{2}\int_0^1 \sqrt{u}e^{-2xu}du\\
  &>2\sqrt{2}\int_0^1 \sqrt{u}e^{-xu}(1-e^{-xu})du\\
  &\ge 0, \qquad \mbox{for} \quad x\ge 0.
  \ea
  \]
  That is, $f$ is monotonically increasing for $x\ge 0$, completing the proof.
\end{proof}

Lemma \ref{vnconvex} together with Theorem~\ref{lowerbound}
and Remark~\ref{finitetoeplitz} leads to
\be
\bsigmahat^T \Vhat\bsigmahat \ge \half (v_2(\bxi)-2v_1(\bxi)) \|\bsigmahat\|^2
\qquad 
\mbox{for any} \quad \bxi \in \mathbb{R}^{d-1}.
\label{sigmahatbound0}
\ee
Combining the above estimate with \eqref{v0bound}, we obtain
\be
\bsigmahat^T \Vhat\bsigmahat \ge 
-c\sqrt{h}\|\bsigmahat\|^2
\qquad 
\mbox{for any} \quad \bxi \in \mathbb{R}^{d-1},
\label{sigmahatbound}
\ee
where
\be
c=3-\sqrt{2}.
\ee

An argument similar to that in the proof of Theorem \ref{2dthm1}
then gives
\be
\|\bsigmahat\| \le \frac{2}{1-c\sqrt{h}} \|\bfhat\|
\qquad 
\mbox{for any}
\quad \bxi \in \mathbb{R}^{d-1}.
\label{bndnd0}
\ee
Taking the $L^2$-norm in Fourier space and then applying the Plancherel
theorem, we have
\be
\|\bsigma\| \le \frac{2}{1-c\sqrt{h}} \|\mbf{f}\|.
\label{bndnd}
\ee
That is, we obtain exactly the same bound \eqref{bnd} as in one dimension, which
shows that the forward Euler scheme is stable for \eqref{robinbiend}
 if $\dt < \pi/(c^2 \kappa^2)$,
where $\kappa$ is the heat transfer coefficient.

\begin{remark}
In the limit $\kappa \rightarrow 0$, the 
scheme is unconditionally stable. This is to be expected, since when $\kappa=0$,
the Robin boundary condition becomes a Neumann condition and the integral
equation \eqref{robinbie} yields the analytic solution $\sigma(\bx,t) = 2\tilde{h}(\bx,t)$.
\end{remark}
%
%

\section{The Dirichlet problem on an arbitrary smooth convex domain} \label{sec:convex}

We now study the stability property of the forward Euler scheme \eqref{biemarch}
for the Dirichlet problem on an arbitrary $C^1$ convex domain, i.e., the boundary
integral equation \eqref{bie}.

We first establish a connection between the heat kernel and the Laplace kernel.
The Green's function for the Laplace 
equation in $\mathbb{R}^d$ is
\[
G_{\rm L}(\bx,\by)=\left\{\begin{array}{cc}
-\frac{1}{2\pi}\ln |\bx-\by|, \quad & d=2,\\
\frac{1}{(d-2)\omega_d} \frac{1}{|\bx-\by|^{d-2}},  \quad & d\geq 3,
\end{array}\right.
\]
where
\be\label{spherearea}
\omega_d=\frac{2\pi^{d/2}}{\Gamma(d/2)}
\ee
is the area of the unit sphere $\sd\subset \mathbb{R}^d$.
Here $\Gamma$ is the gamma function defined by the formula
\be\label{gammadef}
\Gamma(z)=\int_0^\infty x^{z-1}e^{-x}dx.
\ee
The kernel of the
Laplace double layer potential operator
is given by
\be\label{ldkernel}
\ldk
=\frac{\Gamma(d/2)}{2\pi^{d/2}} \frac{(\bx-\by)\cdot\bnu(\by)}{|\bx-\by|^d}.
\ee
It is well known to satisfy Gauss' Lemma~\cite{kress2014}:
\be\label{gausslemma}
\int_\Gamma\ldk dS(\by)=-\half~,
\quad \bx\in \Gamma.
\ee

\begin{lemma}
\be\label{hlconnection}
\lim_{t\rightarrow \infty}\int_0^t\dk d\tau = \ldk.
\ee
\end{lemma}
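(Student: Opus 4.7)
The plan is to reduce the time integral of the heat double layer kernel to the Gamma integral, and then show the resulting constant matches the Laplace double layer kernel. Since $(\bx-\by)\cdot\bnu(\by)$ is independent of $\tau$, it pulls outside the integral, and everything else is a purely scalar function of $t-\tau$. Writing out the explicit form \eqref{dkernel},
\[
\int_0^t\dk d\tau = \frac{(\bx-\by)\cdot\bnu(\by)}{2^{d+1}\pi^{d/2}}\int_0^t \frac{1}{(t-\tau)^{1+d/2}}e^{-\frac{|\bx-\by|^2}{4(t-\tau)}}d\tau.
\]

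Next I would perform a change of variables, first setting $s = t-\tau$ (so the integration runs over $s\in[0,t]$), and then $u = |\bx-\by|^2/(4s)$, which maps $s = t$ to $u_t := |\bx-\by|^2/(4t)$ and $s\to 0^+$ to $u\to\infty$. A straightforward computation gives
\[
\int_0^t \frac{1}{s^{1+d/2}}e^{-\frac{|\bx-\by|^2}{4s}}ds = \frac{2^{d}}{|\bx-\by|^d}\int_{u_t}^{\infty}u^{d/2-1}e^{-u}du.
\]

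Then I would take the limit $t\to\infty$: the lower endpoint $u_t\to 0^+$ (assuming $\bx\neq\by$, which is fine since we only need the identity at a fixed boundary pair), so the integral converges to $\Gamma(d/2)$ by \eqref{gammadef}. Combining factors yields
\[
\lim_{t\to\infty}\int_0^t\dk d\tau = \frac{(\bx-\by)\cdot\bnu(\by)}{2^{d+1}\pi^{d/2}}\cdot\frac{2^{d}\Gamma(d/2)}{|\bx-\by|^d} = \frac{\Gamma(d/2)}{2\pi^{d/2}}\cdot\frac{(\bx-\by)\cdot\bnu(\by)}{|\bx-\by|^d},
\]
which is precisely \eqref{ldkernel}.

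There is no real obstacle here; the proof is essentially bookkeeping of powers of $2$ and $\pi$. The only mild care needed is checking that the integrability at $s\to 0^+$ is preserved uniformly, but since the exponential decay $e^{-|\bx-\by|^2/(4s)}$ kills the singularity at $s=0$ for any fixed $\bx\neq\by$, the integrand is bounded and dominated convergence justifies passing the limit inside.
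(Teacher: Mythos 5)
Your proof is correct and follows essentially the same route as the paper: both reduce the time integral to an incomplete Gamma integral via the substitution $\lambda = |\bx-\by|^2/(4(t-\tau))$ (you split this into two steps, $s=t-\tau$ then $u=|\bx-\by|^2/(4s)$, but the net change of variables is identical), and both then let $t\to\infty$ so the lower limit tends to $0$ and the integral becomes $\Gamma(d/2)$. The constant bookkeeping matches \eqref{ldkernel} exactly as you show.
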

\begin{proof}
By \eqref{dkernel}, we have
\[
\int_0^t\dk d\tau = 
\frac{(\bxy)\cdot\bnu(\by)}{2^{d+1}\pi^{d/2}}\int_0^t\frac{1}{(t-\tau)^{1+d/2}}e^{-\frac{|\bx-\by|^2}{4(t-\tau)}}d\tau.
\]
The change of variables $\lambda=\frac{|\bx-\by|^2}{4(t-\tau)}$
leads to
\be
\int_0^t\dk d\tau=\frac{(\bxy)\cdot\bnu(\by)}{2\pi^{d/2}|\bx-\by|^d} \int_{\frac{|\bx-\by|^2}{4t}}^\infty
\lambda^{\frac{d}{2}-1}e^{-\lambda}d\lambda.
\label{hlconnection1}
\ee
Taking the limit $t\rightarrow \infty$
and using the definition of the gamma
function~\eqref{gammadef},
we obtain \eqref{hlconnection}.
\end{proof}

The following provides the key ingredient for the stability of the forward
Euler scheme in an arbitrary smooth convex domain.
\begin{lemma}\label{convexestimates}
  Suppose that $D$ is a $C^1$ convex domain. Then
  \be\label{nonpositivity}
  \dk\leq 0, \quad \ldk\leq 0, \quad \bx, \by \in \Gamma,
  \ee
and
\be\label{dlplimit}
\lim_{t\rightarrow \infty}
  \int_0^t\int_\Gamma \dk dS(\by)d\tau= -\rhalf, \quad \bx\in \Gamma.
  \ee
 For $t\in (0,\infty)$, define
  \be
  \label{cTdef}
  C(t)=\left\|\int_0^t\int_\Gamma \left|\dk\right| dS(\by)d\tau\right\|_\infty.
  \ee
Then $C(t)$ is a monotonic increasing function of $t$ and 
  \be\label{convexbound}
  C(t) < \rhalf.
  \ee
\end{lemma}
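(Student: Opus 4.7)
The lemma bundles four claims; I would dispatch them in the stated order.

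\emph{Non-positivity \eqref{nonpositivity}.} This is purely geometric. Convexity of $D$ means the tangent hyperplane at any $\by \in \Gamma$ supports $\bar D$, so $(\bz-\by)\cdot\bnu(\by)\le 0$ for every $\bz\in\bar D$; taking $\bz=\bx\in\Gamma$ and noting the positive prefactors in \eqref{dkernel} and \eqref{ldkernel} gives both inequalities.

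\emph{Limit \eqref{dlplimit}.} Starting from the identity \eqref{hlconnection1}, which rewrites $\int_0^t \partial_{\bnu(\by)} G(\bx-\by,t-\tau)\,d\tau$ as a non-positive incomplete-gamma integrand tending monotonically to $\ldk$ as $t\to\infty$, I would apply Gauss' Lemma \eqref{gausslemma}. The Laplace kernel $|\ldk|$ is integrable on $\Gamma$ and dominates, so the $t\to\infty$ limit commutes with $\int_\Gamma dS(\by)$ (dominated or monotone convergence), yielding $-\rhalf$.

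\emph{Monotonicity and strict upper bound on $C(t)$.} Writing $F(\bx,t) := \int_0^t\int_\Gamma |\partial_{\bnu(\by)} G|\,dS(\by)\,d\tau$, monotonicity of $t\mapsto F(\bx,t)$ (and hence of $C(t)=\sup_\bx F(\bx,t)$) is immediate, since the integrand is pointwise non-negative. For the strict inequality, subtract the finite-$t$ integral from the Laplace limit to obtain
\[
\rhalf - F(\bx,t) \;=\; \int_\Gamma \frac{|(\bx-\by)\cdot\bnu(\by)|}{2\pi^{d/2}|\bx-\by|^d}\int_0^{|\bx-\by|^2/(4t)}\lambda^{d/2-1}e^{-\lambda}\,d\lambda\,dS(\by).
\]
For each $\bx\in\Gamma$ this is strictly positive: a bounded $C^1$ convex domain cannot have $(\bx-\by)\cdot\bnu(\by) = 0$ identically in $\by\in\Gamma$, so there is a $\by_0$ with $(\bx-\by_0)\cdot\bnu(\by_0)<0$, and by continuity this persists on a $\Gamma$-neighborhood of $\by_0$ where the inner incomplete-gamma factor is also strictly positive. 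To upgrade pointwise strictness to $C(t)<\rhalf$, I would show $F(\cdot,t)$ is continuous on the compact set $\Gamma$ via dominated convergence against $|\ldk|$; the supremum is then attained at some $\bx_\ast$ and equals $F(\bx_\ast,t)<\rhalf$.

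The principal technical obstacle is justifying the integrability and continuity just invoked at the borderline regularity $C^1$. The Laplace double layer kernel has apparent singularity $|\bx-\by|^{1-d}$ and is integrable only because $(\bx-\by)\cdot\bnu(\by) = o(|\bx-\by|)$ from the combination of $C^1$-smoothness and convexity. This is the standard setting in which the Laplace double layer is compact on $C(\Gamma)$, so the required continuity follows from a routine dominated-convergence argument.
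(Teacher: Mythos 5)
Your proof is correct and follows essentially the same route as the paper: supporting-hyperplane convexity for the sign \eqref{nonpositivity}, the identity \eqref{hlconnection1} together with Gauss' Lemma \eqref{gausslemma} for \eqref{dlplimit}, pointwise monotonicity for $C(t)$, and comparison against the Laplace kernel for \eqref{convexbound}. You are in fact a bit more careful than the paper's terse proof, which dispatches \eqref{convexbound} as a "simple consequence" without addressing strictness; your observation that the pointwise gap is strictly positive (else Gauss' Lemma would be violated) together with continuity of $F(\cdot,t)$ and compactness of $\Gamma$ is exactly the right way to upgrade $F(\bx,t)<\rhalf$ pointwise to $C(t)<\rhalf$.
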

\begin{proof}
  \eqref{nonpositivity} follows from the expressions~\eqref{dkernel} and
  \eqref{ldkernel} and the fact that $\xdy\leq 0$ for $\bx, \by \in \Gamma$
  when $D$ is convex due to the convex separation theorem~\cite{boyd2004}.
  \eqref{dlplimit} follows from \eqref{gausslemma} and \eqref{hlconnection}.
  The monotonic increasing property of $C(t)$ follows from \eqref{hlconnection1}
  and the fact that the integrand is of the same sign everywhere by \eqref{nonpositivity}.
  Finally, \eqref{convexbound} is a simple consequence of \eqref{nonpositivity}
  and \eqref{dlplimit}.
\end{proof}

Recall that the forward Euler scheme \eqref{biemarch} for the Dirichlet problem
is
\begin{align}
\label{biemarch1}
\half \sigma(\bx, n \dt) &=   \sum_{j = 0}^{n-1}
 \int_{j\dt}^{(j+1)\dt} \int_\Gamma
\frac{\partial G(\bx-\by,n\dt -\tau)}{\partial \bnu(\by)}
\sigma(\by,j\dt)ds(\by)d\tau \\
& \qquad- \; f(\bx, n\dt).  \nonumber
\end{align}
Here we have dropped the tilde from $f$ again.

\begin{theorem}
  Let $D\subset\RR^d$ be a bounded, convex domain with
  $C^1$-boundary. Fix $T>0$. The solution $\sigma$
  to~\eqref{biemarch1} satisfies \be \|\sigma\|_\infty \;\leq\;
  \frac{1}{\half-C(T)} \| f\|_\infty
  \label{convexstability}
  \ee
  for any $N$, $\dt$ such that $N\dt \leq T$. Here $C(T)$ is defined in \eqref{cTdef},
  $\|\cdot\|_\infty$ denotes
  the $L^\infty$ norm in space and the $l^\infty$ norm in the discrete temporal variable.
  In other words, the forward scheme \eqref{biemarch} is unconditionally stable on $[0,T]$
  for any $T>0$.
\end{theorem}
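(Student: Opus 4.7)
The plan is to establish the bound via an $L^\infty$ maximum-principle argument, leveraging the key fact from Lemma~\ref{convexestimates} that on a convex $C^1$ domain the double-layer heat kernel $\partial G(\bx-\by,t-\tau)/\partial\bnu(\by)$ is non-positive for $\bx,\by\in\Gamma$. Taking absolute values in \eqref{biemarch1} and using $|\sigma(\by,j\dt)|\leq M_j:=\|\sigma(\cdot,j\dt)\|_{L^\infty(\Gamma)}$, I obtain
\[
\rhalf|\sigma(\bx,n\dt)|\;\leq\; \sum_{j=0}^{n-1} M_j\int_{j\dt}^{(j+1)\dt}\!\!\int_\Gamma\left|\dkj{n\dt}\right|ds(\by)d\tau \;+\; |f(\bx,n\dt)|.
\]

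Next, I would let $M:=\max_{0\leq n\leq N}M_n$ and bound each $M_j$ by $M$ on the right-hand side, pulling it out of the sum. The adjacent time integrals then telescope into a single integral $\int_0^{n\dt}$, and after the spatial integration this is exactly the quantity defining $C(n\dt)$ in~\eqref{cTdef}. By the monotonicity of $C$ (also from Lemma~\ref{convexestimates}), this is at most $C(T)$, so
\[
\rhalf|\sigma(\bx,n\dt)|\;\leq\; M\cdot C(T) + \|f\|_\infty.
\]
Taking the supremum over $\bx\in\Gamma$ and over $n=0,1,\ldots,N$ yields the scalar inequality $\rhalf M\leq M\cdot C(T)+\|f\|_\infty$. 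Since $C(T)<\rhalf$ by \eqref{convexbound}, the coefficient $(\rhalf-C(T))$ is strictly positive and I can isolate $M$ to conclude the desired estimate \eqref{convexstability}.

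The only subtle point is the apparent circularity of bounding $M_j$ (for $j<n$) by $M$ (which may be attained at $n$ itself). This is harmless: the resulting scalar inequality involves $M$ on both sides with positive coefficient $(\rhalf-C(T))$ on the left, so it can simply be solved for $M$. Essentially all the nontrivial analytic work has been done in Lemma~\ref{convexestimates}---in particular, the non-positivity of the kernel (which follows from the convex separation theorem applied to $(\bx-\by)\cdot\bnu(\by)\leq 0$), the connection~\eqref{hlconnection} between heat and Laplace double layers, and Gauss' lemma~\eqref{gausslemma} together pin down the total-mass bound $C(T)<\rhalf$ that drives the entire argument. Unlike the spectral-radius arguments used for the ball, no Fourier or spherical harmonic decomposition is needed here, because the sign information provides an $L^\infty$ contraction directly.
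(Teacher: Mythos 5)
Your proof is correct and follows essentially the same route as the paper: take absolute values in the marching scheme, bound the density at earlier times by its overall sup-norm, telescope the time integrals into a single integral bounded by $C(T)$ via the monotonicity in Lemma~\ref{convexestimates}, and then solve the resulting scalar inequality $\rhalf M\leq C(T)M+\|f\|_\infty$ using $C(T)<\rhalf$. Your observation that the self-referential bound on $M$ is harmless because it can be solved algebraically is exactly the step the paper performs implicitly.
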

\begin{proof}
Taking the absolute value on both sides of \eqref{biemarch1}, we have
\be
\ba
\half |\sigma(\bx,n \dt)|&\leq \sum_{j = 0}^{n-1}
\int_{j\dt}^{(j+1)\dt} \int_\Gamma
\left|\frac{\partial G(\bx-\by,n\dt -\tau)}{\partial \bnu(\by)}
\sigma(\by,j\dt)\right|ds(\by)d\tau \\
& \quad+ \; |f(\bx, n\dt)|\\
& \leq    \sum_{j = 0}^{n-1}
\int_{j\dt}^{(j+1)\dt} \|\sigma(\cdot,j\dt)\|_\infty
\int_\Gamma
\left|\frac{\partial G(\bx-\by,n\dt -\tau)}{\partial \bnu(\by)}\right|
ds(\by)d\tau \\
&\quad+ \| f(\cdot, n\dt)\|_\infty\\
&\leq \|\sigma\|_\infty\sum_{j = 0}^{n-1}
\int_{j\dt}^{(j+1)\dt} \int_\Gamma
\left|\frac{\partial G(\bx-\by,n\dt -\tau)}{\partial \bnu(\by)}\right|
ds(\by)d\tau + \| f\|_\infty\\
&=\|\sigma\|_\infty
\int_{0}^{(n-1)\dt} \int_\Gamma
\left|\frac{\partial G(\bx-\by,n\dt -\tau)}{\partial \bnu(\by)}\right|
ds(\by)d\tau + \| f\|_\infty,
\label{maxbound0}
\ea\ee
where the first inequality follows from the triangle inequality, the second one
following from taking the $L^\infty$ norm in the spatial variable
for both $\sigma$ and $f$, and the third one follows from taking the maximum
norm in the discrete temporal variable.
We continue our calculation
\be
\ba
\rhalf  |\sigma(\bx,n \dt)|&\leq \|\sigma\|_\infty
\int_{0}^{n\dt} \int_\Gamma
\left|\frac{\partial G(\bx-\by,n\dt -\tau)}{\partial \bnu(\by)}\right|
ds(\by)d\tau + \| f\|_\infty\\
&\leq C(T) \|\sigma\|_\infty+ \| f\|_\infty.
\ea\ee
Since the above inequality is valid for any $\bx\in\Gamma$ and any $n$ such that
$n\dt\leq T$, its left hand side can be replaced by $\half \|\sigma\|_\infty$,
completing the proof.
\end{proof}

\section{Conclusions and further remarks}
\label{s:conc}

We have analyzed the stability of the forward Euler
scheme for solving the Dirichlet and Neumann problems
for the heat equation in the unit ball, with data specified on the unit sphere
$\sd\subset\mathbb{R}^d$, using second-kind Volterra
time-domain boundary integral equations.
While finite difference
methods require that the Courant number $\dt/(\Delta x)^2$ be $\mathcal{O}(1)$,
we have shown that integral equation methods can be both explicit and unconditionally
stable for any fixed final time $T$.

We have also studied the Robin problem on the half space in all dimensions and shown
that stability of the forward Euler scheme follows if
$\dt < \frac{\pi}{c^2\kappa^2}$, where $c=3-\sqrt{2}$ and $\kappa$ is the heat
transfer coefficient. As pointed out in Remark \ref{robinremark1}, this bound is
very close to the optimal bound where $c\approx 1.52041925043874$.

A critical element in the proof of unconditional stability
of the forward Euler scheme is the pointwise non-positivity
of the double layer heat kernel on the unit sphere $\sd$,
a property which extends to any convex domain.
Combining this with the elementary fact that a unit double-layer
density generates a surface potential approaching $-\half$
enabled us to extend this stability result to arbitrary
smooth convex domains, in the Dirichlet case and the $L^\infty$-norm.

A key ingredient in the Robin proofs was a bound on the
smallest eigenvalue of real symmetric Toeplitz matrices via the convexity
of the associated  sequence.
This may be of independent interest in signal processing applications.
Another ingredient for the proofs
was a tight rational function bound for the ratio of modified Bessel functions
of the first kind with large positive real argument, which may be of interest in its own right.
A detailed analysis combining these ingredients
showed that in the Dirichlet disc ($d=2$), the density is
bounded in norm by the data, uniformly in time, so long as $\dt\leq 1$.

While this paper is purely analytic, we note that the numerical experiments in 
\cite{wang2017nyu} are consistent with the theory presented here.
More detailed experiments will be reported in a forthcoming paper
\cite{fullheatsolver} that considers the full initial-boundary value problem including
forcing terms.

Some other questions arise naturally from our study. First, for the
Dirichlet problem on the unit ball in higher dimensions, one may ask
whether the scheme is stable for all time given some mild constraint
on $\dt$. Second, one may ask about the stability analysis of the
Robin problem on the unit ball in all dimensions. Third, it is natural
to inquire about the stability of other explicit time marching schemes
such as Adams--Bashforth multistep methods or explicit Runge--Kutta
methods. Fourth, it would be interesting to see if the convexity
assumption could be relaxed, and stability proved for arbitrary,
sufficiently smooth domains. Integral equation methods become
difficult to analyze when the boundary of the domain is not at least
$C^1.$ We are currently investigating these issues and will
report our findings in the future.

\section*{Acknowledgments}
We are grateful for a discussion with Marcus Webb of KU Leuven on the Fourier
series approach. S. Jiang was supported by NSF under grant DMS-1720405
and by the Flatiron Institute, a division of the Simons Foundation. We are also grateful to the anonymous referees for a careful reading of our paper and many useful suggestions for improvement.

\appendix
\section{Properties of the modified Bessel functions of the first kind}
The modified Bessel function of the first kind $I_\nu(x)$
is defined by the formula~\cite[Chapter 10]{nisthandbook}
\[
I_\nu(z)=\left(\half z\right)^\nu\sum_{k=0}^\infty
\frac{\left(\half z\right)^{2k}}{k!\Gamma(\nu+k+1)}.
\]
It satisfies the recurrence relations~\cite[\S10.29.2]{nisthandbook}
\be\label{inrec}
I'_\nu(z)=I_{\nu-1}(z)-\frac{\nu}{z}I_\nu(z),\ \ 
I'_\nu(z)=I_{\nu+1}(z)+\frac{\nu}{z}I_\nu(z).
\ee
When $\nu$ is fixed and $x\rightarrow \infty$~\cite[\S10.30.4]{nisthandbook},
\[
I_\nu(x)\sim \frac{e^x}{\sqrt{2\pi x}}, \quad x\in \mathbb{R}.
\]
When $\nu$ is an integer $n$, $I_n$ admits the integral representation
~\cite[\S10.32.3]{nisthandbook}
\be\label{inrep}
I_n(z)=\frac{1}{\pi}\int_0^\pi e^{z\cos\theta}\cos(n\theta)d\theta.
\ee
The following results can be found in \cite{yang2017mjm}.
\begin{lemma}\label{wnubound}
  Let $W_\nu(x)=\frac{xI_\nu(x)}{I_{\nu+1}(x)}$ and
  $S_{p,\nu}=W^2_\nu(x)-2pW_\nu(x)-x^2$. Then $S_{\nu,\nu-1}(x)$ is
  monotonically decreasing from $0$ to $-\infty$ on $(0,\infty)$ for $\nu>1/2$,
  \be\label{wnubound1}
  \nu-\half +
  \sqrt{x^2+\nu^2-\frac{1}{4}}
  \leq W_{\nu-1}(x) \leq 
  \nu-\half +
  \sqrt{x^2+\left(\nu+\half \right)^2},
  \ee
and 
  \be\label{wnubound2}
  \nu-1+
  \sqrt{x^2+(\nu+1)^2}
  \leq W_{\nu-1}(x)
  \ee
  for $\nu\geq\half $,
  with $x\in(0,\infty)$.
\end{lemma}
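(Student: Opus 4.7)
The plan is to reduce every claim in the lemma to a single Riccati-type ODE satisfied by $W_\mu(x)=xI_\mu(x)/I_{\mu+1}(x)$, and then to run comparison and boundary-behavior arguments against that ODE. First I would derive the Riccati. Writing $r_\mu(x)=I_{\mu+1}/I_\mu$, the two forms in \eqref{inrec} give
\[
\frac{I'_\mu}{I_\mu}=\frac{\mu}{x}+r_\mu=\frac{1}{r_{\mu-1}}-\frac{\mu}{x}.
\]
Substituting $I'_\mu/I_\mu=\mu/x+r_\mu$ into the modified Bessel equation $(I'_\mu/I_\mu)'+(I'_\mu/I_\mu)^2+(I'_\mu/I_\mu)/x=1+\mu^2/x^2$ cancels $\mu^2/x^2$ and yields $r'_\mu=1-(2\mu+1)r_\mu/x-r_\mu^2$; converting via $W_\mu=x/r_\mu$ gives the clean equation
\[
xW'_\mu(x)=x^2+2(\mu+1)W_\mu(x)-W_\mu^2(x).
\]
Specializing $\mu=\nu-1$ I read off the identity $xW'_{\nu-1}(x)=-S_{\nu,\nu-1}(x)$, which couples the first claim of the lemma directly to $W'_{\nu-1}$. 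The boundary data comes from $I_\mu(x)\sim (x/2)^\mu/\Gamma(\mu+1)$ at $0$, giving $W_{\nu-1}(0^+)=2\nu$ and hence $S_{\nu,\nu-1}(0^+)=0$; and from the classical asymptotic $I_\mu(x)\sim e^x(2\pi x)^{-1/2}(1-(4\mu^2-1)/(8x)+O(x^{-2}))$, which yields $W_{\nu-1}(x)=x+(\nu-\tfrac12)+O(1/x)$ and consequently $S_{\nu,\nu-1}(x)=-2\nu x+O(1)\to -\infty$.

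For the two-sided bound \eqref{wnubound1}, I would test the one-parameter family $\Phi_a(x)=\nu-\tfrac12+\sqrt{x^2+a}$ against the Riccati. A direct expansion produces
\[
x\Phi_a'-\bigl(x^2+2\nu\Phi_a-\Phi_a^2\bigr)=a\left(1-\tfrac{1}{\sqrt{x^2+a}}\right)+\tfrac14-\nu^2.
\]
For $a=\nu^2-\tfrac14$ this equals $-(\nu^2-\tfrac14)/\sqrt{x^2+a}\le 0$ when $\nu\ge\tfrac12$, so $\Phi_{\nu^2-1/4}$ is a subsolution; for $a=(\nu+\tfrac12)^2$ the expression simplifies to $(\nu+\tfrac12)(1-(\nu+\tfrac12)/\sqrt{x^2+a})\ge 0$, so $\Phi_{(\nu+1/2)^2}$ is a supersolution. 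Because $\phi(W,x):=x^2+2\nu W-W^2$ has $\partial_W\phi=2(\nu-W)\le 0$ in the regime $W\ge\nu$ occupied by $W_{\nu-1}$ (both bounds and $W_{\nu-1}$ itself exceed $\nu$ since they exceed $\nu-\tfrac12+\tfrac12=\nu$ by inspection), a standard Grönwall argument applied to the difference $\Delta=W_{\nu-1}-\Phi_a$ multiplied by an integrating factor confirms $\Phi_{\nu^2-1/4}\le W_{\nu-1}\le \Phi_{(\nu+1/2)^2}$ on $(0,\infty)$, matching the asymptotic behavior as the ``initial'' condition at infinity. The sharper lower bound \eqref{wnubound2} is obtained by the same algebra with the shifted family $\Psi_b(x)=(\nu-1)+\sqrt{x^2+b}$, which produces the admissible value $b=(\nu+1)^2$ for $\nu\ge\tfrac12$.

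Finally, monotonicity of $S_{\nu,\nu-1}$ is equivalent to $(xW'_{\nu-1})'\ge 0$; differentiating the Riccati gives $(xW'_{\nu-1})'=2x-2(W_{\nu-1}-\nu)W'_{\nu-1}$, so I must prove $(W_{\nu-1}-\nu)W'_{\nu-1}\le x$. Using the Riccati to write $W'_{\nu-1}=\bigl[\nu^2+x^2-(W_{\nu-1}-\nu)^2\bigr]/x$ and the upper bound $W_{\nu-1}-\nu\le -\tfrac12+\sqrt{x^2+(\nu+\tfrac12)^2}$ just established, the product becomes a rational function of $s:=\sqrt{x^2+(\nu+\tfrac12)^2}$ which I would show is bounded above by $x$ by elementary algebra. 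The endpoint data $S_{\nu,\nu-1}(0^+)=0$ and $S_{\nu,\nu-1}(\infty)=-\infty$ then complete the first claim. The main obstacle I expect is exactly this last step: both factors $(W_{\nu-1}-\nu)$ and $W'_{\nu-1}$ grow without bound, so the upper bound from \eqref{wnubound1} must be used with care and the algebraic identity must be arranged to exploit a cancellation; if the bound proves too loose, a backup is to argue monotonicity of $S_{\nu,\nu-1}$ directly from the integral representation \eqref{inrep}, writing $S_{\nu,\nu-1}$ as a ratio of integrals over $[0,\pi]$ whose derivative has a sign that can be read off via a rearrangement inequality.
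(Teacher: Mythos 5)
The paper does not actually prove this lemma: it is imported verbatim from the cited reference \cite{yang2017mjm}, so there is no in-paper argument to compare against and your proof must stand on its own. The core of your construction is sound and verifiable for the two displayed bounds. The Riccati $xW'_\mu=x^2+2(\mu+1)W_\mu-W_\mu^2$ is correct, as are the residual computations $x\Phi_a'-\bigl(x^2+2\nu\Phi_a-\Phi_a^2\bigr)=a\bigl(1-(x^2+a)^{-1/2}\bigr)+\tfrac14-\nu^2$ and its analogue $-\bigl(s-(\nu+1)\bigr)^2/s$ for $\Psi_{(\nu+1)^2}$; since $W_{\nu-1}(0^+)=2\nu$ while $\Phi_{\nu^2-1/4}(0^+)\le 2\nu$, $\Phi_{(\nu+1/2)^2}(0^+)=2\nu$, and $\Psi_{(\nu+1)^2}(0^+)=2\nu$, the comparison should be anchored at $x=0^+$ (where the paper's own Lemma~\ref{lem:odeineq} is the right tool), not "at infinity" as you suggest — at infinity the subsolution and $W_{\nu-1}$ agree to two orders, which makes that anchoring delicate. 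One still needs routine care with the singular endpoint $x=0$ and with the coincident initial values (strictness of the differential inequalities for $x>0$ handles it), but \eqref{wnubound1} and \eqref{wnubound2} are recoverable this way. Two harmless slips: $S_{\nu,\nu-1}(x)\sim -x$, not $-2\nu x$; and the subsolution does not exceed $\nu$ near $x=0$ when $\nu$ is close to $\tfrac12$.

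The genuine gap is the monotonicity of $S_{\nu,\nu-1}$, and it is exactly where you feared. The reduction to $(W_{\nu-1}-\nu)W'_{\nu-1}\le x$ is correct, but it cannot be closed by inserting the bounds \eqref{wnubound1}--\eqref{wnubound2}. Writing $u=W_{\nu-1}-\nu$, the target is $F(u):=u(x^2+\nu^2-u^2)\le x^2$, and $F$ is \emph{decreasing} in $u$ on the admissible range (since $u\ge\sqrt{x^2+(\nu+1)^2}-1\ge\sqrt{(x^2+\nu^2)/3}$ for $\nu\ge\tfrac14$), so the upper bound on $u$ that you propose to substitute gives a \emph{lower} bound on $F(u)$ and proves nothing. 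Substituting the lower bounds instead also fails: with $s=\sqrt{x^2+(\nu+1)^2}$ one gets $F\bigl(s-1\bigr)=2(s-1)(s-1-\nu)$, which exceeds $x^2=(s-\nu-1)(s+\nu+1)$ once $s>\nu+3$; with $s=\sqrt{x^2+\nu^2-\tfrac14}$ one gets $F(s-\tfrac12)=s^2-\tfrac{s}{2}$, which exceeds $x^2$ near $x=0$ whenever $\nu>1/\sqrt2$; and the two regimes do not overlap for large $\nu$. The underlying obstruction is quantitative: the true product is $x-\tfrac12+o(1)$ as $x\to\infty$, so the slack in $uW'_{\nu-1}\le x$ is only $\tfrac12$, while the uncertainty induced by the window between the lower and upper bounds on $u$ is of order $(\nu+\tfrac12)x$. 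You therefore need an independent argument for the monotonicity (in \cite{yang2017mjm} it rests on a separate series/monotone-ratio analysis, and the monotonicity is a genuine input to Lemma~\ref{inbound1}(a) downstream); your fallback via the integral representation is not developed enough to count.
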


\begin{lemma}\label{inbound1}
Let $n$ be a positive integer. Then
\begin{enumerate}[label=(\alph*)]
    \item There is only one zero $r_n$ for the
    equation 
    \[
    \frac{I'_n(x)}{I_n(x)}=1
    \]
    on $(0,+\infty)$. Furthermore,
    \be\label{rnbound}
    \max(n^2-\half ,\frac{n^2}{2}+n)\leq r_n \leq n^2+n.
    \ee
    \item The function $e^{-x}I_n(x)$ increases monotonically on $[0,r_n]$ 
    and decreases monotonically on $[r_n,+\infty)$.
    \item The maximum value of $e^{-x}I_n(x)$ on $[0,\infty)$ satisfies
    \be\label{inbound2}
    \max_{[0,+\infty)}e^{-x}I_n(x)<\frac{1}{2n+1}.
    \ee
\end{enumerate}
\end{lemma}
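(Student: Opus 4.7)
The plan is to handle the three parts in sequence, using the recurrences~\eqref{inrec} and Lemma~\ref{wnubound} for parts (a) and (b), and supplementing with a generating-function identity for part (c).

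For part (a), the first move is to rewrite $I'_n(x)/I_n(x) = 1$ using the first recurrence in~\eqref{inrec} as the equivalent equation $W_{n-1}(x) = x + n$. The lower bound in~\eqref{wnubound1} with $\nu = n \ge 1$ implies $W_{n-1}(x) > n$ throughout $(0,\infty)$, so this equation is equivalent to its squared form $(W_{n-1}(x)-n)^2 = x^2$, i.e.\ $S_{n,n-1}(x) = -n^2$. Since Lemma~\ref{wnubound} asserts that $S_{n,n-1}$ strictly decreases from $0$ to $-\infty$ on $(0,\infty)$, this has a unique root $r_n$. The three bounds on $r_n$ follow by substituting $W_{n-1}(r_n) = r_n + n$ into~\eqref{wnubound1} and~\eqref{wnubound2} and solving linear inequalities in $r_n$ (after squaring): the upper bound gives $r_n \le n^2 + n$, while the two lower bounds give $r_n \ge n^2 - \tfrac12$ and $r_n \ge n^2/2 + n$ respectively. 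Part (b) is then immediate: $(e^{-x} I_n(x))' = e^{-x}(I'_n(x) - I_n(x))$ has the same sign as $W_{n-1}(x) - (x+n)$, which by the monotonicity of $S_{n,n-1}$ (combined with $W_{n-1}>n$) is positive on $(0, r_n)$ and negative on $(r_n, \infty)$.

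For part (c), my plan is to bypass any fine analysis at $r_n$ and instead prove the stronger \emph{pointwise} bound $e^{-x} I_n(x) < 1/(2n+1)$ for every $x > 0$. Setting $t=1$ in the standard generating function $e^{x(t+1/t)/2} = \sum_{k \in \mathbb{Z}} I_k(x)\, t^k$ and using $I_{-k} = I_k$ yields the identity
\[
e^x \;=\; I_0(x) + 2\sum_{k=1}^\infty I_k(x).
\]
Combined with the strict monotonicity $I_0(x) > I_1(x) > \cdots > 0$ for $x > 0$, this gives
\[
e^x \;>\; I_0(x) + 2\sum_{k=1}^n I_k(x) \;\ge\; (2n+1)\, I_n(x),
\]
where the first strict inequality uses positivity of the discarded tail and the second uses $I_k(x) \ge I_n(x)$ for $0 \le k \le n$. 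Dividing by $e^x$ and taking the supremum over $x$ yields~\eqref{inbound2}.

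The one remaining ingredient is the strict monotonicity $I_n(x) > I_{n+1}(x)$ used above, which I would also derive from Lemma~\ref{wnubound}. For $n \ge 1$, the three-term recurrence $I_{n-1} - I_{n+1} = (2n/x)\, I_n$ (implicit in~\eqref{inrec}) gives $I_{n+1}/I_n = (W_{n-1}(x) - 2n)/x$, and the upper bound in~\eqref{wnubound1} reduces the required inequality $W_{n-1}(x) < x + 2n$ to $0 < 2x(n+\tfrac12)$ after squaring. The base case $I_0 > I_1$ follows either by the same argument with $\nu = 1$ or directly from $I_0(x) - I_1(x) = \tfrac{1}{\pi}\int_0^\pi e^{x\cos\theta}(1-\cos\theta)\, d\theta > 0$ via~\eqref{inrep}. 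This monotonicity step is the most subtle piece of the proof, but it is still routine; beyond it everything is bookkeeping with~\eqref{inrec} and Lemma~\ref{wnubound}.
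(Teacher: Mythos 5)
Your proposal is correct and follows essentially the same route as the paper's proof: part (a) reduces $I_n'/I_n=1$ to $S_{n,n-1}(x)=-n^2$ and uses the monotonicity of $S_{n,n-1}$ from Lemma~\ref{wnubound} together with \eqref{wnubound1}--\eqref{wnubound2} for the bounds on $r_n$, part (b) reads off the sign of $(e^{-x}I_n)'$, and part (c) uses the identity $e^x=I_0+2\sum_{k\ge1}I_k$ with the monotonicity of $I_\nu$ in $\nu$. The only differences are cosmetic: you derive the order-monotonicity $I_n>I_{n+1}$ from Lemma~\ref{wnubound} and \eqref{inrep} rather than citing \cite[\S10.37]{nisthandbook}, and in (b) you get the sign pattern directly from the monotonicity of $S_{n,n-1}$ instead of evaluating the bounds near $0$ and $\infty$.
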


\begin{proof}
\begin{enumerate}[label=(\alph*)]
\item
  Using the recurrence~\eqref{inrec}, we have
  \[
  W_{n-1}(x)=x\frac{I'_n(x)}{I_n(x)}+n.
  \]
  Thus,
  \[
  S_{n,n-1}(x)=x^2\left(\frac{I'_n(x)}{I_n(x)}\right)^2-x^2-n^2.
  \]
  When $\frac{I'_n(x)}{I_n(x)}=1$,  $S_{n,n-1}=-n^2$.
  By the monotonicity and the range of $S_{n,n-1}(x)$, $S_{n,n-1}$ takes the value
  $-n^2$ at only one point and we denote that point by $r_n$.

  Substituting $x=r_n$ into \eqref{wnubound1} and \eqref{wnubound2} with
  $\frac{I'_n(r_n)}{I_n(r_n)}=1$ and simplifying the resulting expressions,
  we obtain \eqref{rnbound}.
\item We have
  \[
  \frac{d}{dx}(e^{-x}I_n(x))=e^{-x}I_n(x) \left(\frac{I'_n(x)}{I_n(x)}-1\right).
  \]
  Using \eqref{wnubound1}, it follows that
  $\frac{I'_n(x)}{I_n(x)}>1$ for $x<n^2-\half $
  and $\frac{I'_n(x)}{I_n(x)}<1$ for $x>n^2+n$. Combing these facts with (a),
  we have $\frac{I'_n(x)}{I_n(x)}>1$ for $x<r_n$
  and $\frac{I'_n(x)}{I_n(x)}<1$ for $x>r_n$. That is,
  $\frac{d}{dx}(e^{-x}I_n(x))>0$ for $x<r_n$
  and $\frac{d}{dx}(e^{-x}I_n(x))<0$ for $x>r_n$, which completes the proof of (b).
\item By the identity \S10.35.5 in \cite{nisthandbook}, we have
  \[
  1=e^{-x}\left(I_0(x)+2\sum_{k=1}^\infty I_k(x)\right).
  \]
  Section 10.37 of \cite{nisthandbook} states that for fixed $x>0$, $I_\nu(x)$
  is positive and decreasing for $0<\nu<\infty$.
  Hence,
  \[
  1>e^{-x}\left(I_n(x)+2\sum_{k=1}^n I_n(x)\right)=(2n+1)e^{-x}I_n(x),
  \]
  which completes the proof.
\end{enumerate}
\end{proof}

The following lemma about differential inequalities can be found
in \cite[Chapter III, \S4]{hartman}. See also \cite{petrovitsch1901}.
\begin{lemma}[Petrovitsch 1901]\label{lem:odeineq}
  Suppose that $f(y,t)$ is continuous in an open domain $D$. Suppose further that
  $y$ is the solution to the Cauchy problem
  \[
  y'(t)=f(y(t),t), \quad y(t_0)=y_0, \quad (y_0,t_0)\in D.
  \]
  
  \begin{enumerate}[label=(\alph*)]
  \item (Increasing $t$). Suppose that $u$ satisfies the inequalities
    \be\label{ucond1}
    \ba
    u'(t)&\geq f(u(t),t), \quad t\in (t_0, t_0+\delta)\, (\delta>0)\\
    u(t_0)&\geq y(t_0).
    \ea
    \ee
    Then
    \be\label{odeineq1}
    u(t)\geq y(t), \quad t\in [t_0,t_0+\delta].
    \ee
    The inequality in \eqref{odeineq1} is reversed if both inequalities in \eqref{ucond1} are reversed.
  \item (Decreasing $t$). Suppose that $u$ satisfies the inequalities
    \be\label{ucond2}
    \ba
    u'(t)&\leq f(u(t),t), \quad t\in (t_0-\delta,t_0)\, (\delta>0)\\
    u(t_0)&\geq y(t_0).
    \ea
    \ee
    Then 
    \be\label{odeineq2}
    u(t)\geq y(t), \quad t\in [t_0-\delta,t_0].
    \ee
    The inequality in \eqref{odeineq2} is reversed if both inequalities in \eqref{ucond2} are reversed.
  \end{enumerate}
\end{lemma}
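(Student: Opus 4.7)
The plan is to prove part (a) in the non-reversed direction; the other three statements then follow by elementary symmetries. The reversed inequalities in (a) come from applying the non-reversed case to $\tilde u = -u$, $\tilde y = -y$ with $\tilde f(z,t) := -f(-z,t)$. Part (b) follows by the change of variable $s = t_0 - t$: setting $\tilde u(s) := u(t_0-s)$, $\tilde y(s) := y(t_0-s)$, $\tilde f(z,s) := -f(z, t_0-s)$, the hypotheses of (b) translate exactly to those of (a) on $[0,\delta]$, and the conclusion translates back.

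The core difficulty is that the hypothesis yields only
\[
(u-y)'(t) \;\geq\; f(u(t),t) - f(y(t),t),
\]
and without a Lipschitz or one-sided bound on $f$ this affords no Gronwall-type control of $u-y$ in terms of itself. My remedy is a strict perturbation of the ODE. For each small $\epsilon>0$, Peano's existence theorem produces a solution $y_\epsilon$ of
\[
y_\epsilon'(t) = f(y_\epsilon(t),t) - \epsilon, \qquad y_\epsilon(t_0) = y(t_0),
\]
on a common interval $[t_0, t_0+\delta_0]$, with $\delta_0$ independent of small $\epsilon$ by local boundedness of $f$ near $(y_0,t_0)$. The key step is the strict comparison $u(t) > y_\epsilon(t)$ on $(t_0, t_0+\delta_0]$, which I would prove by a first-touch contradiction. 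If $t_1 := \inf\{t\in(t_0,t_0+\delta_0] : u(t)\le y_\epsilon(t)\}$ were finite, continuity would force $u(t_1) = y_\epsilon(t_1)$, and then
\[
(u-y_\epsilon)'(t_1) \;\geq\; f(u(t_1),t_1) - \bigl[f(y_\epsilon(t_1),t_1)-\epsilon\bigr] \;=\; \epsilon \;>\; 0,
\]
so $u - y_\epsilon$ crosses zero with strictly positive slope at $t_1$, contradicting the definition of $t_1$. The boundary case $u(t_0) = y(t_0)$ is handled by the same computation at $t_0^+$, which produces an initial strict gap and makes the infimum well-defined.

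To pass $\epsilon \to 0^+$, the family $\{y_\epsilon\}$ is uniformly bounded and equicontinuous on $[t_0,t_0+\delta_0]$ because $|y_\epsilon'|$ is controlled by $\sup|f|+\epsilon$ on a compact neighborhood of $(y_0,t_0)$, so Arzela--Ascoli delivers a uniform subsequential limit $z$. Passing to the limit in the integral form $y_{\epsilon_n}(t) = y(t_0) + \int_{t_0}^t f(y_{\epsilon_n}(s),s)\,ds - \epsilon_n(t-t_0)$ shows $z$ solves the unperturbed Cauchy problem; the hypothesis that $y$ \emph{is} the solution identifies $z$ with $y$, whence $u(t)\ge y(t)$ on $[t_0, t_0+\delta_0]$. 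A routine bootstrap restarted at $t_0+\delta_0$ extends this to the full $[t_0,t_0+\delta]$.

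The main obstacle is exactly the absence of any Lipschitz or one-sided monotonicity assumption on $f$: this is precisely what rules out a direct comparison of $u$ and $y$ and forces the detour through the strictly perturbed equation. Once that device is in place the remainder is a standard combination of the first-touch principle with a compactness-based limit; the only residual subtlety is identifying the $\epsilon\to 0$ limit with $y$, which is automatic under the implicit uniqueness reading of the Cauchy problem and, if uniqueness fails, requires a short comparison with the maximal solution of the unperturbed problem.
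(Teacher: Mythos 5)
The paper does not prove this lemma at all; it is quoted as a classical result with a pointer to Hartman's book (Chapter III, \S4) and to Petrovitsch's original paper. Your argument is, in substance, exactly the proof given there: reduce everything to one case by negation and time reversal, compare $u$ with solutions of the strictly perturbed equation $y_\epsilon'=f(y_\epsilon,t)-\epsilon$ via a first-touch contradiction, and recover the unperturbed solution by Arzel\`a--Ascoli as $\epsilon\to 0^+$. The reductions check out, the first-touch step is sound (the left derivative of $u-y_\epsilon$ at a first zero must be $\le 0$ while the differential inequality forces it to be $\ge\epsilon$), and the compactness limit is standard. One small correction: with the downward perturbation $-\epsilon$, the limit you obtain is the \emph{minimal} solution of the Cauchy problem, so in the non-uniqueness case the conclusion $u\ge y$ of part (a) requires $y$ to be the minimal (not maximal) solution; your closing remark has the extremal solution on the wrong side. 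This is immaterial for the paper's application, where $f$ is locally Lipschitz and the solution is unique.
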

\begin{lemma}\label{g0sign}
  Let
  \be\label{g0def}
  g_0(x)=(4x-3)I_0(x)-(4x-1)I_1(x).
  \ee
  Then $g_0(x)$ has a unique zero, denoted as $x^\ast$, on $(\frac{3}{4},\infty)$.
  Furthermore, $g_0(x)<0$ on $[\frac{3}{4},x^\ast)$ and $g_0(x)>0$ on $(x^\ast,\infty)$.
\end{lemma}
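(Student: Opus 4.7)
The idea is to convert the sign question for $g_0$ into a Riccati-type comparison for $R(x) := I_0(x)/I_1(x)$, and then invoke Petrovitsch's differential inequality (Lemma~\ref{lem:odeineq}). First I factor
\[
g_0(x) \;=\; I_1(x)\,(4x-3)\,\bigl[R(x) - q(x)\bigr], \qquad q(x) := \frac{4x-1}{4x-3}.
\]
Since $I_1(x) > 0$ and $4x-3 > 0$ on $(\tfrac{3}{4}, \infty)$, the zeros and sign structure of $g_0$ on this interval coincide with those of $\rho(x) := R(x) - q(x)$. The endpoint is immediate: $g_0(\tfrac{3}{4}) = -2\,I_1(\tfrac{3}{4}) < 0$.

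The key step is the Riccati equation for $R$: using $I_0' = I_1$ and $I_1' = I_0 - I_1/x$ from \eqref{inrec}, a one-line calculation gives
\[
R'(x) \;=\; 1 - R(x)^2 + \frac{R(x)}{x} \;=:\; f(R(x), x).
\]
Substituting $y = q(x)$ into $f$ and reducing over the common denominator $x(4x-3)^2$ yields
\[
q'(x) - f(q(x), x) \;=\; -\frac{3}{x\,(4x-3)^2} \;<\; 0 \qquad \text{for all } x > \tfrac{3}{4},
\]
i.e.\ $q$ is a strict subsolution of the ODE satisfied by $R$. By Lemma~\ref{lem:odeineq}, at any point $x_0$ with $\rho(x_0) = 0$ we have $\rho \leq 0$ on $(\tfrac{3}{4}, x_0]$ (part (b) applied to $q \geq R$) and $\rho \geq 0$ on $[x_0, \infty)$ (part (a) in reversed form applied to $q \leq R$). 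Two distinct zeros $x_0 < x_1$ would force $\rho \equiv 0$ on $[x_0, x_1]$, giving $R \equiv q$ and hence $R' \equiv q'$ there, contradicting the strict subsolution inequality. So $g_0$ has at most one zero on $(\tfrac{3}{4}, \infty)$.

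Existence follows from the behaviour at infinity. Using the standard asymptotic $I_\nu(x) \sim (e^x/\sqrt{2\pi x})\sum_{k \geq 0}(-1)^k a_k(\nu)/x^k$ and keeping terms through $1/x^3$ in each of $I_0$ and $I_1$ (the first three orders cancel in $g_0$), one obtains
\[
g_0(x) \;\sim\; \frac{3\,e^x}{8\,x^2\,\sqrt{2\pi x}} \qquad \text{as } x \to \infty,
\]
so $g_0(x) > 0$ for all sufficiently large $x$. Combined with $g_0(\tfrac{3}{4}) < 0$, continuity, and the uniqueness shown above, the intermediate value theorem produces the unique $x^* > \tfrac{3}{4}$ with $g_0 < 0$ on $[\tfrac{3}{4}, x^*)$ and $g_0 > 0$ on $(x^*, \infty)$.

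The main obstacle is the strict subsolution inequality for $q$: it is purely algebraic but the cancellations are specific to this particular $q = (4x-1)/(4x-3)$, and the fact that the residual $-3/(x(4x-3)^2)$ is strictly negative (rather than merely nonpositive) is what drives the entire uniqueness argument. A minor technical nuisance is the asymptotic bookkeeping in the existence step, which requires three correction terms from each Bessel function because of the leading-order cancellations; this could be bypassed by a direct verification that $g_0(x_1) > 0$ at a convenient value such as $x_1 = 2$ via series bounds on $I_0(2)$ and $I_1(2)$.
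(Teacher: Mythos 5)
Your proposal is correct and follows essentially the same route as the paper: rewrite the sign question as a comparison between $R(x)=I_0(x)/I_1(x)$ and $q(x)=(4x-1)/(4x-3)$, note that $R$ satisfies the Riccati equation $R'=1+R/x-R^2$ while the residual $q'-(1+q/x-q^2)=-3/(x(4x-3)^2)$ is strictly negative, and then invoke Petrovitsch's lemma together with the endpoint sign at $x=3/4$ and the asymptotics at infinity. The only cosmetic difference is that you package the conclusion as ``at most one zero plus IVT'' rather than the paper's direct two-sided sign propagation from a crossing point, but the ingredients are identical.
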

\begin{proof}
  Let $r_\nu(x)=\frac{I_\nu(x)}{I_{\nu+1}(x)}$.
  In particular,
\[
  r_0(x)=\frac{I_0(x)}{I_1(x)}.
\]
  From \S10.37 of \cite{nisthandbook},
  we know that  $I_\nu(x)$ is positive and increasing on $(0,\infty)$ for fixed
  $\nu(\geq 0)$ and $I_\nu(x)$ is decreasing on $0<\nu<\infty$ for fixed $x$.
  Thus, $r_\nu(x)>1$ on
  $(0,\infty)$ for $\nu\geq0$.
  Let
\[
  l_0(x)=\frac{4x-1}{4x-3}.
\]
  Then it is clear that the sign of $g_0(x)$ is determined by comparing
  $r_0(x)$ with $l_0(x)$.
  First, $\lim_{x\rightarrow \frac{3}{4}^+}l_0(x)=+\infty$ and thus $l_0(x)>r_0(x)$
  as $x\rightarrow \frac{3}{4}^+$.
  Second, the series expansion of $l_0(x)$ and the asymptotic expansion of
  $r_0(x)$ are as follows:
\[
  l_0(x)=1+\frac{1}{2\,x}+\frac{3}{8\,x^2}+\frac{9}{32\,x^3}+\frac{27}{128\,x^4}+\frac{81}{512\,x^5}+O\left(\frac{1}{x^6}\right),
\]
\[
  r_0(x)=1+\frac{1}{2\,x}+\frac{3}{8\,x^2}+\frac{3}{8\,x^3}+\frac{63}{128\,x^4}+\frac{27}{32\,x^5}+O\left(\frac{1}{x^6}\right).
\]
  Hence, $r_0(x)>l_0(x)$ as $x\rightarrow \infty$.
  Combining these two facts, there is at least one point
  $x^\ast\in(\frac{3}{4},\infty)$ where
  $r_0(x^\ast)=l_0(x^\ast)$. Or equivalently,
  \[
  g_0(x^\ast)=0.
  \]
  By the recurrence relations~\eqref{inrec}, $r_\nu$ satisfies the following
  Riccati equation
\[
  r'_\nu(x)=1+\frac{2\nu+1}{x}r_\nu(x)-r^2_\nu(x).
\]
  In particular, for $\nu=0$,
\[
  r'_0(x)=1+\frac{1}{x}r_0(x)-r^2_0(x).
\]
  We now calculate
\[
  l'_0(x)-(1+\frac{1}{x}l_0(x)-l^2_0(x))=-\frac{3}{x(4x-3)^2}<0 \quad x\in(\frac{3}{4},\infty).
\]
  By Lemma \ref{lem:odeineq}, we have
\[
  l_0(x)\leq u_0(x), \quad x\geq x^\ast; \qquad
  l_0(x)\geq r_0(x), \quad x\in (\frac{3}{4},x^\ast).
\]
  Equivalently,
\[
  g_0(x)\geq 0, \quad x\geq x^\ast; \qquad g_0(x)<0, \quad x\in [\frac{3}{4},x^\ast),
\]
  completing the proof.
\end{proof}
\begin{remark}
  Numerical computation shows that $x^\ast\approxeq 1.452165365078841\ldots$.
\end{remark}
\begin{corollary}\label{h0pos}
  Let
  \be\label{h0def}
  h_0(x)=(x-2)g_0(x)=(x-2)[(4x-3)I_0(x)-(4x-1)I_1(x)],
  \ee
  where $g_0(x)$ is defined in \eqref{g0def}.
  Then $h_0(x)\geq 0$ on $[\frac{3}{4},x^\ast]$ and $[2,\infty)$;
    $h_0(x)\leq 0$ on $[x^\ast,2)$.
\end{corollary}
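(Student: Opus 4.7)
The plan is to deduce this corollary directly from Lemma~\ref{g0sign} by a simple sign analysis, since $h_0(x) = (x-2)g_0(x)$ is a product of two factors whose signs we can determine on each of the three intervals in question. The only point of substance is that the unique zero $x^\ast$ of $g_0$ satisfies $x^\ast < 2$; once that is known, the conclusions follow by inspecting signs.

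First I would verify that $x^\ast < 2$. By Lemma~\ref{g0sign}, $g_0$ has a unique zero in $(3/4,\infty)$ and is strictly negative immediately to its left and strictly positive immediately to its right. Therefore, to conclude $x^\ast<2$, it suffices to check that $g_0(2) > 0$. This is an elementary (and numerically trivial) evaluation: $g_0(2) = 5 I_0(2) - 7 I_1(2)$, and using standard values $I_0(2) \approx 2.2796$ and $I_1(2) \approx 1.5906$ gives $g_0(2) \approx 0.245 > 0$. (For a fully rigorous check without numerical constants, one may use the series expansions for $I_0$ and $I_1$ together with the alternating-remainder estimate, or invoke the asymptotic comparison already carried out in the proof of Lemma~\ref{g0sign}.)

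Given $x^\ast < 2$, the corollary reduces to the following case analysis. On $[3/4, x^\ast]$, Lemma~\ref{g0sign} yields $g_0(x) \le 0$, while $x-2 < 0$, so the product $h_0(x) = (x-2)g_0(x) \ge 0$. On $[x^\ast, 2)$, we have $g_0(x) \ge 0$ and $x-2 < 0$, giving $h_0(x) \le 0$. Finally, on $[2,\infty)$, both factors are nonnegative (since $2 > x^\ast$ implies $g_0(x) > 0$ on this range, and $x-2\ge 0$), so $h_0(x) \ge 0$.

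Since the sign analysis itself is mechanical, the only step with any content is verifying $x^\ast < 2$, and even this is essentially immediate from the sign change of $g_0$ established in Lemma~\ref{g0sign} together with one explicit evaluation of $g_0$ at the point $x=2$. I expect no real obstacle; the corollary is a book-keeping consequence of the preceding lemma, and the explicit numerical value $x^\ast \approx 1.452$ mentioned in the remark makes the required inequality $x^\ast<2$ transparent.
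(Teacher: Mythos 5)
Your argument is correct and is precisely the book-keeping that the paper leaves implicit when stating this as a corollary of Lemma~\ref{g0sign}: the sign of $h_0=(x-2)g_0$ follows immediately from the signs of the two factors once one knows $x^\ast<2$, which the paper records only via the numerical remark $x^\ast\approx 1.452$. Your suggestion to verify $x^\ast<2$ directly from $g_0(2)>0$ is a sensible way to make this explicit, and it changes nothing of substance.
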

\begin{lemma}\label{h1pos}
  Let
  \be\label{h1def}
  h_1(x)=(4x^2-7x)I_1(x)-(4x^2-9x+3)I_0(x).
  \ee
  Then $h_1(x)>0$ on $[\frac{3}{4},\infty)$.
\end{lemma}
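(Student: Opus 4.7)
The plan is to mimic the Riccati-comparison strategy of Lemma~\ref{g0sign}, applied now to the rational function $L_1(x) := (4x^2-7x)/(4x^2-9x+3)$ and the ratio $r_0(x) := I_0(x)/I_1(x)$. Writing $p(x) := 4x^2-7x$ and $q(x) := 4x^2-9x+3$, whose larger root is $x_+ := (9+\sqrt{33})/8 \approx 1.843$, the inequality $h_1(x) > 0$ splits naturally into two regimes separated by $x_+$, preceded by a baseline check at $x = 3/4$.

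First I would dispose of the endpoint: a direct computation gives $h_1(3/4) = \tfrac{3}{2}[I_0(3/4) - 2 I_1(3/4)]$, and the recurrence $I_0(x) - I_2(x) = 2 I_1(x)/x$ evaluated at $x = 3/4$ yields $I_0(3/4) - 2 I_1(3/4) = \tfrac{2}{3} I_1(3/4) + I_2(3/4) > 0$. For $x > x_+$, both $p(x)$ and $q(x)$ are positive, so $h_1(x) > 0$ is equivalent to $r_0(x) < L_1(x)$. The Hankel asymptotic expansions give $r_0(x) = 1 + \tfrac{1}{2x} + \tfrac{3}{8x^2} + \tfrac{3}{8x^3} + O(x^{-4})$ and $L_1(x) = 1 + \tfrac{1}{2x} + \tfrac{3}{8x^2} + \tfrac{15}{32x^3} + O(x^{-4})$, so $L_1(x) - r_0(x) = \tfrac{3}{32x^3} + O(x^{-4}) > 0$ at infinity; moreover $L_1(x_+^+) = +\infty$ trivially dominates $r_0(x_+)$. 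Using the Riccati equation $r_0'(x) = 1 + r_0(x)/x - r_0(x)^2$, derived from~\eqref{inrec}, I would then form the explicit rational function
\[
E(x) := L_1'(x) - \bigl(1 + L_1(x)/x - L_1(x)^2\bigr),
\]
with $L_1'(x) = (-8x^2+24x-21)/q(x)^2$ (whose numerator has negative discriminant, so $L_1' < 0$ on its domain), and verify $E(x) \ge 0$ on $(x_+, \infty)$ by reducing to a polynomial sign question after clearing denominators. Applying Lemma~\ref{lem:odeineq}(a) with $u = L_1$ and $y = r_0$ on $(x_+, \infty)$, starting from $t_0 \to x_+^+$ where $L_1 \to +\infty$, then delivers $L_1(x) \ge r_0(x)$ on $(x_+, \infty)$, hence $h_1 > 0$ there.

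On the transitional interval $[3/4, x_+]$ the sign of $q$ reverses the equivalence above, so I would use the algebraic decomposition
\[
h_1(x) = -h_0(x) + 2(x-1) I_1(x) - (2x-3) I_0(x),
\]
which follows from the identities $4x^2-9x+3 = (4x-3)(x-2) + (2x-3)$ and $4x^2-7x = (4x-1)(x-2) + (2x-2)$. Corollary~\ref{h0pos} gives the signs of $h_0$ on $[3/4, x^\ast]$ and on $[x^\ast, x_+] \subset [x^\ast, 2]$, while the residual $\lambda(x) := 2(x-1) I_1(x) - (2x-3) I_0(x)$ can be bounded below by direct Bessel estimates at the endpoints $x = 3/4$, $x = 1$, $x = x_+$ together with monotonicity arguments analogous to those already used to derive $h_0 \ge 0$.

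The principal obstacle is verifying $E(x) \ge 0$ uniformly on $(x_+, \infty)$: because $L_1 - r_0$ vanishes only as $x^{-3}$ at infinity (rather than $x^{-1}$ as in Lemma~\ref{g0sign}), the leading order cancellations in $E(x)$ are severe, and showing positivity of the resulting numerator polynomial over the unbounded half-line will require a careful factorisation or an interval split together with supplementary asymptotic analysis near $x_+$. A secondary difficulty is the bookkeeping on $[3/4, x_+]$, where the interplay of sign changes in $p$, $q$, and $h_0$ forces a subinterval-by-subinterval argument rather than a single clean Riccati comparison.
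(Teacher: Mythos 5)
Your plan for the regime $x > x_+ = (9+\sqrt{33})/8$ rests on verifying $E(x) := L_1'(x) - \bigl(1 + L_1(x)/x - L_1(x)^2\bigr) \ge 0$ on all of $(x_+,\infty)$ and then applying Lemma~\ref{lem:odeineq}(a) starting from $x_+^+$. This step cannot be carried out: a direct computation (which the paper records) gives
\[
E(x) = \frac{3(x-3)}{(4x^2-9x+3)^2},
\]
so $E(x) < 0$ on $(x_+, 3)$ and $E(x) > 0$ on $(3,\infty)$. There is no "careful factorisation" that will rescue positivity on the unbounded half-line — the sign change at $x=3$ is genuine. The paper's proof works around this by anchoring the Riccati comparison at $x=3$, where the explicit check $L_1(3) = 5/4 > r_0(3) \approx 1.2346$ holds, and then applying Lemma~\ref{lem:odeineq} in the \emph{increasing} direction on $[3,\infty)$ (where $E>0$) and in the \emph{decreasing} direction on $[x_+,3)$ (where $E<0$). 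Both directions of Petrovitsch's lemma are needed; a single forward sweep from $x_+$ fails.

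Your treatment of $[3/4, x_+]$ is also considerably heavier than necessary, although not obviously wrong. The decomposition $h_1 = -h_0 + \lambda$ with $\lambda(x) = 2(x-1)I_1(x) - (2x-3)I_0(x)$ is algebraically correct, but it shifts the burden to proving $\lambda > h_0$ on $[3/4, x^\ast]$ (where $h_0 \geq 0$ by Corollary~\ref{h0pos}), which you leave as unfinished "direct Bessel estimates." The paper instead observes that on $[7/4, x_+]$ both $4x^2-7x \geq 0$ and $-(4x^2-9x+3) \geq 0$, so $h_1 > 0$ trivially; and on $[3/4, 7/4]$ it again uses the same Riccati comparison, now in the increasing direction from $x = 3/4$ (where $u_0(3/4) = 2 < r_0(3/4) \approx 2.8$), noting $E < 0$ there. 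This makes the full interval $[3/4, x_+]$ fall out with essentially no extra machinery beyond what is already in place.
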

\begin{proof}
  Let $x_1^\ast=\frac{\sqrt{33}+9}{8}=1.843\ldots$ be the larger root of
  $4x^2-9x+3$. Then $4x^2-9x+3>0$ for $x>x_1^\ast$ and $4x^2-9x+3<0$ for
  $x\in[\frac{3}{4},x_1^\ast)$. We break $[\frac{3}{4},\infty)$ into several
  subintervals and show the positivity of $h_1(x)$ on each subinterval.
  \begin{enumerate}[label=(\alph*)]
  \item $x\in [x_1^\ast,\infty)$.
  Let
\[
  u_0(x)=\frac{4x^2-7x}{4x^2-9x+3}.
\]
  Then
  \be\label{u0diff}
  u'_0(x)-(1+\frac{1}{x}u_0(x)-u^2_0(x))=\frac{3\,\left(x-3\right)}{{\left(4\,x^2-9\,x+3\right)}^2},
  \ee
  which is greater than zero if $x>3$ and less than zero if $x<3$.
  At $x=3$, $u_0(3)=\frac{5}{4}=1.25$ and
  $r_0(3)= 1.23459\ldots<1.25=u_0(3)$. Thus,
  Using Lemma \ref{lem:odeineq} in the increasing direction we have $r_0(x)< u_0(x)$ on $[3,\infty)$;
  and using Lemma \ref{lem:odeineq} in the decreasing direction, we still have
  $r_0(x)< u_0(x)$ on $[x_1^\ast,3)$.
  Equivalently, $h_1(x)>0$ on $[x_1^\ast,\infty)$.
  
  \item $x\in[\frac{7}{4},x_1^\ast]$. On this subinterval, we have
  $4x^2-7x\geq 0$ and $-4x^2+9x-3\geq 0$. Hence, $h_1(x)>0$, since $I_1(x)$
  and $I_0(x)$ are always positive on $[0,\infty)$.

  \item $x\in[\frac{3}{4},\frac{7}{4}]$. By \eqref{u0diff}, we have
    $u'_0(x)-(1+\frac{1}{x}u_0(x)-u^2_0(x))\leq 0$ on $[\frac{3}{4},\frac{7}{4}]$.
    Also, $u_0(\frac{3}{4})=2<r_0(\frac{3}{4})=2.8\ldots$. Using Lemma
    \ref{lem:odeineq}, we have $r_0(x)>u_0(x)$, or equivalently $h_1(x)>0$ on
    $[\frac{3}{4},\frac{7}{4}]$.
  \end{enumerate}
\end{proof}
\begin{lemma}\label{rho0bound1}
  Let $f_0(x)=e^{-\frac{1}{x}}I_0(\frac{1}{x})$,
  $f_1(x)=e^{-\frac{1}{x}}I_1(\frac{1}{x})$, $f(x)=f_0(x)+a f_1(x)$ with
  $a=0.05$.
  Then $f'''(x)>0$ on $(0,\infty)$.
\end{lemma}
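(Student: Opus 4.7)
The plan is to reduce the sign of $f'''(x)$ to the sign of a specific linear combination of $I_0$ and $I_1$ evaluated at $y = 1/x$, and then appeal to Corollary~\ref{h0pos} and Lemma~\ref{h1pos}, supplemented by a separate argument for small Bessel argument.

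I would first change variables by setting $y = 1/x$, so that $f(x) = g(y)$ with $g(y) := e^{-y}(I_0(y) + a I_1(y))$ and $dy/dx = -y^2$. Repeated application of the chain rule yields
\begin{equation*}
f'''(x) = -y^6 g'''(y) - 6 y^5 g''(y) - 6 y^4 g'(y),
\end{equation*}
so proving $f'''(x) > 0$ on $(0,\infty)$ is equivalent to showing $y^2 g'''(y) + 6 y g''(y) + 6 g'(y) < 0$ for all $y > 0$. Next I would compute $g', g'', g'''$ explicitly using the Bessel recurrences~\eqref{inrec} $I_0'(y) = I_1(y)$ and $I_1'(y) = I_0(y) - I_1(y)/y$ (and, as needed, $I_n''$ via the modified Bessel equation) to write each of $g', g'', g'''$ in the form $e^{-y}[p_k(y) I_0(y) + q_k(y) I_1(y)]$ with $p_k, q_k$ rational in $y$.

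Assembling these and clearing the common factor $e^{-y}$ and a suitable power of $y$, the inequality $f'''(x) > 0$ reduces to an inequality of the form
\begin{equation*}
P(y) I_0(y) + Q(y) I_1(y) > 0 \qquad \text{for all } y > 0,
\end{equation*}
for specific polynomials $P, Q$ whose coefficients depend on $a$. The key step is to observe that, for the specific value $a = 0.05$, the left-hand side can be regrouped as a non-negative combination
\[
P(y) I_0(y) + Q(y) I_1(y) \;=\; \alpha(y)\, h_0(y) + \beta(y)\, h_1(y) + R(y),
\]
where $\alpha, \beta$ are positive (or piecewise positive) weights, $h_0, h_1$ are the quantities from Corollary~\ref{h0pos} and Lemma~\ref{h1pos}, and $R(y) \geq 0$ is manifest. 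Corollary~\ref{h0pos} and Lemma~\ref{h1pos} then immediately give positivity on $y \in [\tfrac{3}{4}, x^\ast] \cup [2,\infty)$; on the ``bad'' interval $y \in [x^\ast, 2)$, where $h_0 \leq 0$, the weights must be arranged so that the positive $\beta h_1$ contribution dominates the $\alpha h_0$ contribution. The precise tuning $a = 0.05$ should be exactly what makes this dominance work.

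The main obstacle is twofold. First, the algebraic recombination into $\alpha h_0 + \beta h_1 + R$ is delicate: one must identify the correct decomposition, and verify the $h_1$-dominates-$h_0$ estimate on $[x^\ast,2)$ quantitatively (this is presumably where the precise numerical value $a = 0.05$ enters, rather than some other small number). Second, the prior lemmas cover only $y \geq \tfrac{3}{4}$, so the range $y \in (0, \tfrac{3}{4})$ (equivalently $x > \tfrac{4}{3}$) requires a separate treatment. There I would either use the convergent power series $I_n(y) = \sum_{k \geq 0} (y/2)^{n+2k}/(k!(n+k)!)$ to expand $P I_0 + Q I_1$ term by term and check non-negativity of the resulting power series coefficients, or adapt the Riccati-comparison technique used in the proof of Lemma~\ref{h1pos}, comparing $r_0(y) = I_0(y)/I_1(y)$ with a suitable rational majorant/minorant on $(0,\tfrac{3}{4})$. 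This small-$y$ regime requires care because $r_0(y) \to \infty$ as $y \to 0^+$.
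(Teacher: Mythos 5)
Your strategy is essentially the one the paper uses, and the change of variables and chain-rule reduction are correct. The one simplification you miss is that the decomposition you conjecture, $P(y)I_0(y)+Q(y)I_1(y)=\alpha(y)h_0(y)+\beta(y)h_1(y)+R(y)$, is not needed in any generality: $h_0$ and $h_1$ are defined (in \eqref{h0def} and \eqref{h1def}) precisely so that $f_0'''(x)=x^{-4}e^{-1/x}h_0(1/x)$ and $f_1'''(x)=x^{-4}e^{-1/x}h_1(1/x)$, so $f'''(x)>0$ is \emph{exactly} the statement $h_0(y)+a\,h_1(y)>0$ for $y=1/x>0$, with $\alpha\equiv 1$, $\beta\equiv a$, $R\equiv 0$. (One can check this directly from your chain-rule formula: computing $-y^2 g_0'''(y)-6y g_0''(y)-6g_0'(y)$ with $g_0=e^{-y}I_0(y)$ gives $e^{-y}[(4y^2-11y+6)I_0-(4y^2-9y+2)I_1]=e^{-y}h_0(y)$.)

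For the two gaps you flag, the paper's treatment is more elementary than what you propose. On $(0,3/4)$ it does not use power series or another Riccati comparison; instead it splits $(0,3/4)$ into $[0,1/4]$, $[1/4,(9-\sqrt{33})/8]$, $[(9-\sqrt{33})/8,3/4]$ and on each piece bounds the polynomial coefficients of $I_0,I_1$ in $h_0,h_1$ by constants, then uses $I_0>I_1>0$ to close. On $(x^\ast,2)$, where $h_0<0$, it shows $h_1'(x)=(x-3)g_0(x)<0$ (so $h_1>h_1(2)$ there), shows $h_0''>0$ so $h_0$ has a single minimum, and then bounds $\min_{(x^\ast,2)} h_0$ numerically, verifying $\min h_0 + a\,h_1(2)>0$; the value $a=0.05$ is what makes this last numerical inequality hold. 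So your outline is sound and matches the paper's proof in structure; what remains to make it a complete proof is to replace the conjectured recombination by the exact identity $h_0+ah_1$, and to carry out the elementary sign-bookkeeping on $(0,3/4)$ and the quantitative estimate on $(x^\ast,2)$.
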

\begin{proof}
Using the recurrence relation~\eqref{inrec}, we obtain
\[
f_0'''(x)=\frac{1}{x^4}e^{-\frac{1}{x}}h_0\left(\frac{1}{x}\right),
\]
where $h_0(x)$ is defined in \eqref{h0def}.
Similarly,
\[
f_1'''(x)=\frac{1}{x^4}e^{-\frac{1}{x}}h_1\left(\frac{1}{x}\right),
\]
where $h_1(x)$ is defined in \eqref{h1def}.
Thus, in order to show that $f'''(x)>0$ on $(0,\infty)$, we only need
to show that $h_0(x)+a h_1(x)>0$ on $(0,\infty)$.

We break it into several steps.
\begin{enumerate}[label=(\alph*)]
\item $x\in[0,1/4]$.
  On this interval, $3-4x\geq 2$, $0\leq 1-4x\leq 1$, $2-x\geq 1.75$, thus $h_0(x)\geq 1.75(2I_0(x)-I_1(x))>1.75I_0(x)$.
  And $4x^2-7x\geq -1.5$, $4x^2-9x+3\leq 3$, thus $h_1(x)\geq-1.5I_1(x)-3I_0(x)>-4.5I_0(x)$. Combinging these results,
  we have
  \[
  h_0(x)+ah_1(x)>(1.75+0.05\times(-4.5))I_0(x)>0.
  \]
\item $\frac{1}{4}\leq x\leq\frac{9-\sqrt{33}}{8}<0.5$.
  On this interval, $3-4x>1$, $4x-1\geq 0$, $2-x>1.5$, thus $h_0(x)>1.5I_0(x)$.
  And $4x^2-7x>-2.5$, $0\leq 4x^2-9x+3\leq 1$, thus $h_1(x)>-2.5I_1(x)-I_0(x)>-3.5I_0(x)$. Combining these results,
  we have
  \[
  h_0(x)+ah_1(x)>(1.5+0.05\times (-3.5))I_0(x)>0.
  \]
\item $\frac{9-\sqrt{33}}{8} \leq x \leq 3/4$.
  On this interval, $3-4x\geq 0$, $4x-1>0.6$, $2-x>1$, thus $h_0(x)>0.6I_1(x)$.
  And $4x^2-7x\geq -3$, $-(4x^2-9x+3)\geq 0$, thus $h_1(x)\geq -3I_1(x)$. Combining these results,
  we have
  \[
  h_0(x)+ah_1(x)>(0.6-0.05\times 3)I_1(x)>0.
  \]
\item $x\in [\frac{3}{4},x^\ast] \cup [2,\infty)$. On these two subintervals,
  both $h_0(x)$ and $h_1(x)$ are positive by Corollary \ref{h0pos} and Lemma
  \ref{h1pos}. Thus $h_0(x)+ah_1(x)>0$.
\item $x\in (x^\ast,2)$.
  We calculate
  \[
  h'_1(x)=(x-3)g_0(x),
  \]
  where $g_0(x)$ is defined in \eqref{g0def}.
  By Lemma~\ref{g0sign}, $g_0(x)>0$ on $(x^\ast,\infty)$. Thus, $h'_1(x)< 0$
  on $(x^\ast,2)$. This shows that $h_1(x)>h_1(2) \approx 0.901688$ on $(x^\ast,2)$.
  On the other hand, it is straightforward to show that $g'_0(x)>0$ and $g''_0(x)<0$ on
  $(x^\ast,2)$. Hence, $h''_0(x)=g''_0(x)(x-2)+2g'_0(x)>0$ on $(x^\ast,2)$, indicating that
  $h_0(x)$ achieves its minimum at exactly one point.
  Numerical calculation shows that
  \[
  \min_{x\in (x^\ast,2)} h_0(x)\approx -0.043\ldots>-0.044.
  \]
  Hence,
  \[
  h_0(x)+a h_1(x)\geq \min_{x\in (x^\ast,2)} h_0(x)+0.05\times h_1(2)>0.
  \]
\end{enumerate}
\end{proof}

\bibliographystyle{abbrv}
\bibliography{heat}

\end{document}